\numberwithin{equation}{section} 
\newtheorem{thm}{Theorem}[section]
\newtheorem{prp}[thm]{Proposition} 
\newtheorem{lmm}[thm]{Lemma}  
\newtheorem{crl}[thm]{Corollary}
\newtheorem*{prp*}{Proposition}
\theoremstyle{definition}
\newtheorem{eg}[thm]{Example}
\newtheorem{rmk}[thm]{Remark}
\DeclareFontFamily{U}{mathx}{\hyphenchar\font45}
\DeclareFontShape{U}{mathx}{m}{n}{
      <5> <6> <7> <8> <9> <10>
      <10.95> <12> 
      mathx10
      }{}
\DeclareSymbolFont{mathx}{U}{mathx}{m}{n}
\DeclareMathAccent{\widecheck}{0}{mathx}{"71}
\def\BE#1{\begin{equation}\label{#1}}
\def\EE{\end{equation}}
\def\eref#1{(\ref{#1})}
\def\lra{\longrightarrow}
\def\Lra{\Longrightarrow}
\def\Llra{\Longleftrightarrow}
\def\xra#1{\xrightarrow{\hspace*{#1cm}}}
\def\ov#1{\overline#1}
\def\ti#1{\tilde{#1}}
\def\wc#1{\widecheck{#1}}
\def\wt#1{\widetilde{#1}}
\def\wh#1{\widehat{#1}}
\def\tn#1{\textnormal{#1}}
\def\ori#1{\accentset{\circ}{#1}}
\def\obu#1{\accentset{\bullet}{#1}}
\def\sf#1{\textsf{#1}}
\def\al{\alpha}
\def\be{\beta}
\def\de{\delta}
\def\ep{\epsilon}
\def\ga{\gamma}
\def\io{\iota}
\def\la{\lambda}
\def\om{\omega}
\def\si{\sigma}
\def\vph{\varphi}
\def\vp{\varpi}
\def\De{\Delta}
\def\Th{\Theta}
\def\C{\mathbb C}
\def\cK{\mathcal K}
\def\fM{\mathfrak M}
\def\cN{\mathcal N}
\def\P{\mathbb P}
\def\cR{\mathcal R}
\def\Q{\mathbb Q}
\def\R{\mathbb R}
\def\fR{\mathfrak R}
\def\cS{\mathcal S}
\def\T{\mathbb T}
\def\fX{\mathfrak X}
\def\Z{\mathbb Z}
\def\bs{\mathbf s}
\def\bff{\mathbf f}
\def\fI{\mathfrak i}
\def\fc{\mathfrak c}
\def\0{\mathbf 0}
\def\Si{\Sigma}
\def\Bl{\textnormal{Bl}}
\def\Deck{\tn{Deck}}
\def\ev{\textnormal{ev}}
\def\Flux{\tn{Flux}}
\def\Hur{\tn{Hur}}
\def\id{\textnormal{id}}
\def\Im{\textnormal{Im}}
\def\ord{\textnormal{ord}}
\def\PD{\textnormal{PD}}
\def\rk{\tn{rk}}
\def\vir{\textnormal{vir}}
\def\i{\infty}
\def\prt{\partial}
\def\eset{\emptyset}
\begin{document}

\title{On the Rim Tori Refinement of\\
Relative Gromov-Witten Invariants}
\author{Mohammad F.~Tehrani and Aleksey Zinger\thanks{Partially 
supported by NSF grant 0846978}}
\date{\small\today}
\maketitle

\begin{abstract}
\noindent
We construct Ionel-Parker's proposed refinement of 
the standard relative Gromov-Witten invariants in terms of abelian covers
of the symplectic divisor and discuss in what sense it gives rise to invariants.
We use it to obtain some vanishing results for the standard relative Gromov-Witten invariants.
In a separate paper, we describe to what extent this refinement sharpens
the usual symplectic sum formula and give further  qualitative applications.
\end{abstract}

\tableofcontents

\section{Introduction}
\label{intro_sec}

\noindent
Gromov-Witten invariants of symplectic manifolds, which include nonsingular projective varieties, 
are certain counts of pseudo-holomorphic curves that play 
prominent roles in symplectic topology, algebraic geometry, and string theory.
The decomposition formulas, known as symplectic sum formulas 
in symplectic topology and degeneration formulas in algebraic geometry, 
are one of the main tools used to compute Gromov-Witten invariants;
they relate Gromov-Witten invariants of one symplectic manifold 
to Gromov-Witten invariants of two simpler symplectic manifolds.
Unfortunately, the formulas of \cite{Jun2,LR} do not completely determine 
the former in terms of the latter
in many cases because of the so-called \sf{vanishing cycles}:
second homology classes in the first manifold which vanish when projected 
to the union of the other two manifolds; see~\eref{cRXYVdfn_e}.
A~refinement to the usual relative Gromov-Witten invariants of \cite{Jun1,LR} 
is sketched in~\cite{IPrel};
the aim of this refinement is to resolve the unfortunate deficiency of 
the  formulas of \cite{Jun2,LR} in~\cite{IPsum}.
In this paper, we formally construct the refinement of~\cite{IPrel},
discuss the invariance and computability aspects of the resulting curve counts,
and 
obtain some vanishing results for the usual relative GW-invariants.
In the sequel~\cite{GWsumIP}, we describe the applicability of this refinement
to computing the Gromov-Witten invariants of symplectic sums and 
obtain  further  qualitative applications.

\subsection{Relative GW-invariants}
\label{RelGW_subs0}

\noindent
Let $(X,\om)$ be a compact symplectic manifold and $J$ be an $\om$-tame almost complex structure on~$X$.
For $g,k\!\in\!\Z^{\ge0}$ and $A\!\in\!H_2(X;\Z)$, we denote by $\ov\fM_{g,k}(X,A)$ 
the moduli space of stable $J$-holomorphic $k$-marked degree~$A$ maps from connected nodal 
curves of genus~$g$.
By \cite{LT,FO,BF}, this moduli space carries a virtual class, which is independent of~$J$
and of representative~$\om$ in a deformation equivalence class 
of symplectic forms on~$X$.
If $V\!\subset\!X$ is a compact symplectic divisor (symplectic submanifold of real codimension~2), 
$\ell\!\in\!\Z^{\ge0}$, $\bs\!\equiv\!(s_1,\ldots,s_{\ell})$
is an $\ell$-tuple of positive integers such~that 
\BE{bsumcond_e} s_1+\ldots+s_{\ell}=A\cdot V,\EE
and $J$ restricts to an almost complex structure on $V$, let 
$\ov\fM_{g,k;\bs}^V(X,A)$  denote the moduli space of 
stable $J$-holomorphic $(k\!+\!\ell)$-marked maps from connected nodal curves of genus~$g$ 
that have contact with~$V$ at the last $\ell$ marked points of orders $s_1,\ldots,s_{\ell}$.
According to \cite{LR,Jun1}, this moduli space carries a virtual class, 
which is independent of~$J$ and of representative~$\om$ in a deformation equivalence class 
of symplectic forms on~$(X,V)$.\\

\noindent
There are natural \sf{evaluation morphisms}
\begin{alignat}{1}\label{evdfn_e1}
\ev_X\!\equiv\!\ev_1\!\times\!\ldots\!\times\!\ev_k\!: 
\ov\fM_{g,k}(X,A),\ov\fM_{g,k;\bs}^V(X,A)&\lra X^k,\\
\label{evdfn_e2}
\ev_X^V\!\equiv\!\ev_{k+1}\!\times\!\ldots\!\times\!\ev_{k+\ell}\!:
\ov\fM_{g,k;\bs}^V(X,A)&\lra V_{\bs}\equiv V^{\ell},
\end{alignat}
sending each stable map to its values at the marked points.
The (\textsf{absolute}) \textsf{GW-invariants of~$(X,\om)$}
are obtained by pulling back elements of $H^*(X^k;\Q)$ by the morphism~\eref{evdfn_e1}
and integrating them and other natural classes on  $\ov\fM_{g,k}(X,A)$
against the virtual class of $\ov\fM_{g,k}(X,A)$.
The (\textsf{relative}) \textsf{GW-invariants of~$(X,V,\om)$}
are obtained by pulling back elements of $H^*(X^k;\Q)$ and $H^*(V_{\bs};\Q)$
by the morphisms~\eref{evdfn_e1} and~\eref{evdfn_e2}, and integrating them and 
other natural classes on  $\ov\fM_{g,k;\bs}^V(X,A)$
against the virtual class of $\ov\fM_{g,k;\bs}^V(X,A)$.\\

\noindent
As emphasized in \cite[Section~5]{IPrel}, two preimages of the same point in~$V_{\bs}$
under~\eref{evdfn_e2} determine an element~of 
\BE{cRXVdfn_e0}\cR_X^V\equiv \ker\big\{\io_{X-V*}^X\!:\,H_2(X\!-\!V;\Z)\lra H_2(X;\Z)\big\},\EE
where $\io_{X-V}^X\!:X\!-\!V\!\lra\!X$ is the inclusion; see Section~\ref{prelimcut_subs}.
The elements of~$\cR_X^V$, called \sf{rim tori} in~\cite{IPrel},
can be represented by circle bundles over loops~$\ga$ in~$V$; see Section~\ref{cuthom_subs}.
By standard topological considerations,
\BE{cRXVvsH1V_e}\cR_X^V\approx H_1(V;\Z)_X\equiv 
\frac{H_1(V;\Z)}{H_X^V},
\qquad\hbox{where}\quad H_X^V\equiv \big\{A\!\cap\!V\!:\,A\!\in\!H_3(X;\Z)\big\} \,;\EE
see Corollary~\ref{rimtori_crl}.\\

\noindent
The main claim of \cite[Section~5]{IPrel} is that the above observations can be used 
to lift~\eref{evdfn_e2} over some regular (Galois), possibly disconnected (unramified) covering
\BE{IPcov_e}\pi_{X;\bs}^V\!: \wh{V}_{X;\bs}\lra V_{\bs},\EE
though the topology of this cover is not specified and the group of its deck transformation
is described incorrectly as~$\cR_X^V$ in~\cite{IPrel}.
Since
\BE{evfactor_e}\ev_X^V\!=\!\pi_{X;\bs}^V\!\circ\!\wt\ev_X^V\!:\,\ov\fM_{g,k;\bs}^V(X,A)\lra V_{\bs}\EE
for some morphism
\BE{evXVlift_e} \wt\ev_X^V\!:\ov\fM_{g,k;\bs}^V(X,A)\lra \wh{V}_{X;\bs}\,,\EE
the numbers obtained by pulling back elements of $H^*(\wh{V}_{X;\bs};\Q)$ by~\eref{evXVlift_e},
instead of elements of $H^*(V_{\bs};\Q)$ by~\eref{evdfn_e2}, and integrating them and 
other natural classes on  $\ov\fM_{g,k;\bs}^V(X,A)$ against the virtual class of 
$\ov\fM_{g,k;\bs}^V(X,A)$ refine the usual GW-invariants of~$(X,V,\om)$.
We will call these numbers the {\sf{IP-counts for~$(X,V,\om)$}.
\\

\noindent
The covering~\eref{IPcov_e}, which is completely determined by $(X,V)$ and~$\bs$,
is defined in Section~\ref{RTCov_subs} based on the sketch in \cite[Section~5]{IPrel}
and after some preparation in Section~\ref{AbCovNotat_subs}.
For example,
$$\wh{V}_{X;()}=\cR_X^V\!\times\!V_{()}=\cR_X^V\,,$$
where $()\!\in\!\Z_+^0$ is the empty vector.
If $V$ is connected, then
$$ \wh{V}_{X;(1)}=\wh{V}_X$$
is the abelian covering corresponding to the quotient $H_1(V;\Z)_X$ of $H_1(V;\Z)$,
i.e.~to the preimage of~$H_X^V$ under the Hurewicz homomorphism
$\pi_1(X)\!\lra\!H_1(X;\Z)$.
The group of deck transformations of the covering~\eref{IPcov_e} is given~by
\BE{DeckGr_e}\Deck\big(\pi_{X;\bs}^V\big) 
=\frac{\cR_X^V}{\cR_{X;\bs}'^{\,V}}\!\times\! \cR_{X;\bs}'^{\,V}\EE
for a certain submodule $\cR_{X;\bs}'^{\,V}$ of $\cR_{X}^V$.
For example,
$$\cR_{X;\bs}'^{\,V}=\begin{cases}\{0\},&\hbox{if}~\ell\!=\!0;\\
\gcd(\bs)\cR_X^V,&\hbox{if}~|\pi_0(V)|\!=\!1.
\end{cases}$$
In general, the deck group~\eref{DeckGr_e} is different from $\cR_X^V$ 
(contrary to an explicit statement in \cite[Section~5]{IPrel}).
By \cite[Assertion~6]{Mi68} in the case $H_1(V;\Z)_X$ is of rank~1 and its extension~\cite{Mi14}, 
$H_*(\wh{V}_{X;\bs};\Q)$ is not finitely generated if $V$ is connected, 
$\chi(V)\!\neq\!0$, and $H_1(V;\Z)_X$  is not a torsion group 
(so that the covering~\eref{IPcov_e} is infinite).\\

\noindent
By standard covering spaces considerations, the total relative evaluation map~\eref{evdfn_e2}
lifts over the covering~\eref{IPcov_e}; see Lemma~\ref{RimToriAct_lmm}.
The lift~\eref{evXVlift_e} of~\eref{evdfn_e2} is not unique and 
involves choices of base points in various spaces.
However, these choices can be made in a systematic manner and the lift~\eref{evXVlift_e}
extends over the space of stable smooth maps (and $L^p_1$-maps with $p\!>\!2$); 
see Theorem~\ref{RimToriAct_thm} and Remark~\ref{cHlift_rmk}. 
This ensures that the IP-counts for~$(X,V,\om)$ are independent of~$J$
and of representative~$\om$ in a deformation equivalence class of symplectic forms on~$(X,V)$.\\

\noindent
If $V\!=\!V_1\!\sqcup\!V_2$ for some symplectic divisors $V_1,V_2\!\subset\!X$,
it is natural to consider the moduli spaces
$$\ov\fM_{g,k;\bs_1\bs_2}^{V_1,V_2}(X,A) \subset\bigcup_{\bs}\ov\fM_{g,k;\bs}^V(X,A) $$
that keep track of contacts with $V_1$ and~$V_2$ separately.
There is then a forgetful morphism
$$f\!: \ov\fM_{g,k;\bs_1\bs_2}^{V_1,V_2}(X,A)\lra \ov\fM_{g,k;\bs_1}^{V_1}(X,A)$$
which drops the relative contacts with~$V_2$.
The total relative evaluation morphisms~\eref{evdfn_e2} corresponding to the two moduli
spaces above are compatible with~$f$
and the projection 
$$\pi_1\!:(V_1)_{\bs_1}\!\times\!(V_2)_{\bs_2}\lra(V_1)_{\bs_1}\,,$$
i.e.~the part of the diagram in Figure~\ref{IPev_fig} involving only the corners commutes.
The projection~$\pi_1$ lifts to a smooth map~$\wt\pi_1$ between the total spaces of the coverings
$$ \pi_{X;\bs_1\bs_2}^{V_1,V_2}\!:\wh{V}_{X;\bs_1\bs_2}\lra (V_1)_{\bs_1}\!\times\!(V_2)_{\bs_2}
\quad\hbox{and}\quad
\pi_{X;\bs_1}^{V_1}\!: (\wh{V}_1)_{X;\bs_1}\lra (V_1)_{\bs_1},$$
so that the right square in  Figure~\ref{IPev_fig} commutes.
The relevant base points determining the lifts~\eref{evXVlift_e} can be chosen so
that the resulting lifted evaluation morphisms are compatible with~$f$ and~$\wt\pi_1$,
i.e.~the  left square in  Figure~\ref{IPev_fig} commutes.

\begin{figure}
\begin{gather*}
\xymatrix{\ov\fM_{g,k;\bs_1\bs_2}^{V_1,V_2}(X,A) 
\ar@/^2pc/[rrrr]^{\ev_{X;\bs_1\bs_2}^{V_1,V_2}}
\ar[rr]^>>>>>>>>>>>{\wt\ev_{X;\bs_1\bs_2}^{V_1,V_2}} \ar[d]_f  &&  
\wh{V}_{X;\bs_1\bs_2}\ar[rr]^>>>>>>>>>>>{\pi_{X;\bs_1\bs_2}^{V_1,V_2}} \ar[d]^{\wt\pi_1}&&  
(V_1)_{\bs_1}\!\times\!(V_2)_{\bs_2} \ar[d]^{\pi_1}\\
\ov\fM_{g,k;\bs_1}^{V_1}(X,A) \ar@/_2pc/[rrrr]^{\ev_{X;\bs_1}^{V_1}}
 \ar[rr]^>>>>>>>>>>>{\wt\ev_{X;\bs_1}^{V_1}} && 
(\wh{V}_1)_{X;\bs_1} \ar[rr]^{\pi_{X;\bs_1}^{V_1}} && (V_1)_{\bs_1}}
\end{gather*}
\caption{The potential compatibility of the lifted relative evaluation morphisms~\eref{evXVlift_e}.}
\label{IPev_fig}
\end{figure}

\subsection{Qualitative applications}
\label{appl_subs}

\noindent
The set of all IP-counts for~$(X,V)$ for elements in an orbit for 
the $\Deck(\pi_{X;\bs}^V)$-action on $H^*(\wh{V}_{X;\bs};\Q)$ 
depends only on $(X,V,\om)$, the cohomology class on~$X^k$,
and intrinsic classes on $\ov\fM_{g,k;\bs}^V(X,A)$, such as descendants.
However, the individual IP-counts also depend on the precise choice of the lift~\eref{evXVlift_e}. 
If $\ell\!=\!0$, the cover~\eref{IPcov_e} is trivial and these numbers can be indexed
by the elements of~$\cR_X^V$.
This is generally not the case if $\ell\!\neq\!0$, 
including in the last claim of \cite[Lemma~14.5]{IPsum} and in \cite[Lemma~14.8]{IPsum};
see \cite[Remarks~6.5,6.8]{GWsumIP}.
Because the IP-counts generally depend on the choice of the lift~\eref{evXVlift_e}
and the homology of $\wh{V}_{X;\bs}$ is usually very complicated,
they appear to be of little quantitative use outside of very rare cases.
On the other hand, they can sometimes provide qualitative information, 
as indicated by Theorem~\ref{RelGW_thm} below.

\begin{thm}\label{RelGW_thm}
Let $(X,\om)$ be a compact symplectic manifold and $V\!\subset\!X$ be a compact 
symplectic divisor which admits a fibration $q\!:V\!\lra\!(S^1)^m$ with 
a connected fiber~$F$ such that $H_1(F;\Q)\!=\!\{0\}$. 
If $\ell\!\in\!\Z^+$ and $\bs\!\in\!\Z_+^{\ell}$, then
$$\ev_X^{V*}\al\cap \big[\ov\fM_{g,k;\bs}^V(X,A)\big]^{\vir}=0
\qquad\forall~\al\in H^r(V_{\bs};\Q),~r\!>\!(\dim_{\R}V)\ell\!-\!\rk_{\Z}H_1(V;\Z)_X\,,$$
i.e.~all relative GW-invariants of $(X,V,\om)$ with non-trivial contacts with~$V$ and
relative insertions $\al$ as above vanish.
\end{thm}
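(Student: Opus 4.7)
The proof rests on the factorization $\ev_X^V = \pi_{X;\bs}^V \circ \wt\ev_X^V$ of \eref{evfactor_e}, which gives
\[
\ev_X^{V*}\al \cap \big[\ov\fM_{g,k;\bs}^V(X,A)\big]^{\vir}
= \wt\ev_X^{V*}\big(\pi_{X;\bs}^{V*}\al\big) \cap \big[\ov\fM_{g,k;\bs}^V(X,A)\big]^{\vir}.
\]
It therefore suffices to show that $\pi_{X;\bs}^{V*}\al = 0$ in $H^r(\wh{V}_{X;\bs};\Q)$ whenever $\al \in H^r(V_{\bs};\Q)$ with $r > (\dim_\R V)\ell - n$, where $n \equiv \rk_\Z H_1(V;\Z)_X$. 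In fact I would prove the stronger statement that the whole group $H^r(\wh{V}_{X;\bs};\Q)$ vanishes in this range.

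The first step is to pin down the rational homotopy type of $\wh{V}_{X;\bs}$. Using the Serre spectral sequence of $F \lra V \stackrel{q}{\lra} (S^1)^m$ and the hypothesis $H_1(F;\Q) = 0$, the map $q_*: H_1(V;\Q) \lra H_1((S^1)^m;\Q) \cong \Q^m$ is an isomorphism, so $H_1(V;\Q)_X$ is an $n$-dimensional quotient of $\Q^m$. Since the total rank of the two factors in \eref{DeckGr_e} equals $\rk_\Z \cR_X^V = n$ by \eref{cRXVvsH1V_e}, the deck group $\Deck(\pi_{X;\bs}^V)$ has rational rank $n$. Tracking the construction of \eref{IPcov_e}, I would show that the surjection $H_1(V^\ell;\Z) \twoheadrightarrow \Deck(\pi_{X;\bs}^V)$ defining $\wh{V}_{X;\bs}$ factors rationally through $q^\ell_*: H_1(V^\ell;\Q) \stackrel{\cong}{\lra} \Q^{m\ell}$. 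Consequently, up to a finite cover, $\wh{V}_{X;\bs} \lra V^\ell$ is the pullback along $q^\ell$ of an abelian cover $W \lra (S^1)^{m\ell}$ with free deck group of rank $n$; any such $W$ is diffeomorphic to $\R^n \times (S^1)^{m\ell - n}$ and hence rationally equivalent to $(S^1)^{m\ell - n}$.

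The conclusion follows from the Serre spectral sequence of the lifted fibration $F^\ell \lra \wh{V}_{X;\bs} \lra W$. Its $E_2^{p,q} = H^p(W;\cH^q)$, with $\cH^q$ the local system of rational cohomology of the fibers $F^\ell$, vanishes for $p > m\ell - n$ (since $W$ has rational cohomological dimension $m\ell - n$ with any local coefficients) and for $q > (\dim_\R V - m)\ell = \dim_\R F^\ell$. Thus $H^r(\wh{V}_{X;\bs};\Q) = 0$ whenever $r > (m\ell - n) + (\dim_\R V - m)\ell = (\dim_\R V)\ell - n$, completing the argument.

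The main obstacle is the middle step: rigorously verifying that $\wh{V}_{X;\bs} \lra V^\ell$ is rationally pulled back from $(S^1)^{m\ell}$ via $q^\ell$. This requires unpacking the explicit construction of \eref{IPcov_e}, computing the surjection $\pi_1(V^\ell) \twoheadrightarrow \Deck(\pi_{X;\bs}^V)$ induced by the covering action, and checking that its rationalization factors through $q^\ell_*$. The remaining spectral-sequence computation is then essentially formal.
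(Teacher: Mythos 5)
Your proposal is correct and follows essentially the same route as the paper's proof in Section~\ref{RelGWThm_subs}: both reduce to a vanishing statement for $H^*(\wh{V}_{X;\bs};\Q)$ via~\eref{evfactor_e}, both use the exact sequence $H_1(F;\Z)\!\to\!H_1(V;\Z)\!\to\!H_1(B;\Z)\!\to\!0$ together with $H_1(F;\Q)\!=\!\{0\}$ to transfer the covering (up to a finite further cover) to a pullback of a rank-$r_0$ abelian cover $W\!\approx\!\R^{r_0}\!\times\!(S^1)^{m\ell-r_0}$ of the base torus $B^{\ell}$. The only difference is cosmetic: the paper concludes by splitting off the $\R^{r_0}$ factor to get $\wh{V}_{X;\bs}'\!\approx\!\R^{r_0}\!\times\!Y$ with $\dim Y\!=\!(\dim_{\R}V)\ell\!-\!r_0$, whereas you run the Serre spectral sequence of the lifted fibration over $W$, which yields the same degree bound.
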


\noindent
The proof of this theorem readily extends to disconnected divisors~$V$, 
after replacing the rank of $H_1(V;\Z)_X$ with the rank of
the appropriate submodule of~$H_1(V;\Z)_X$, depending on~$\bs$;
see Remark~\ref{RelGW_rmk}.\\

\noindent
If $V\!=\!(S^1)^{2n-2}$ and $H^3(X;\Z)\!=\!0$, then
$$H_1(V;\Z)_X= H_1(V;\Z)\approx\Z^{2n-2}\,.$$
In this case, the relative GW-invariants of $(X,V,\om)$ with non-trivial contacts with~$V$ vanish
whenever the degree of the relative insertion exceeds $(2n\!-\!2)(\ell\!-\!1)$.
In particular, the only relative GW-invariants of $(X,V,\om)$ with a single 
(but arbitrary order) contact that may be nonzero are those that involve no relative
constraint (insertion $1\!\in\!H^*(V;\Q)$).
This particular observation is immediate from~\eref{evfactor_e}, because 
$\wh{V}_{X;(s)}\!\approx\!\C^{n-1}$ and thus $\wh{V}_{X;(s)}$  has no positive-degree cohomology
for any $s\!\in\!\Z^+$.
We use this fact  in \cite[Section~6.3]{GWsumIP} to streamline the proof of \cite[(15.4)]{IPsum},
after correcting its statement;
this formula computes some GW-invariants of the blowup~$\wh\P^2_9$ of~$\P^2$ at 9~points.\\

\noindent
If $V$ is any topological space, a loop of homeomorphisms 
$$\Psi_t\!: V\lra V, \qquad t\in[0,1],~~\Psi_0=\Psi_1,$$
and a point $x\!\in\!V$ determines a loop $t\!\lra\!\Psi_t(x)$ in~$V$
and thus an element of $H_1(V;\Z)$.
The latter is independent of the choice of $x\!\in\!V$.
We denote the set of all elements of $H_1(V;\Z)$ obtained in this way by~$\Flux(V)$. 
It is a subgroup of $H_1(V;\Z)$, usually called \sf{the flux subgroup} 
(or \sf{group}).
If in addition $V\!\subset\!X$ is a compact oriented submanifold of a compact oriented manifold
and $H_1(V;\Z)_X$ is as in~\eref{cRXVvsH1V_e}, let
$$\Flux(V)_X\subset H_1(V;\Z)_X$$
denote the image of $\Flux(V)$ under the quotient projection.

\begin{thm}\label{EqRelGWs_thm}
Let $(X,\om)$ be a compact symplectic manifold and
$V\!\subset\!X$ be a compact connected symplectic divisor such~that 
\BE{EqRelGWs_e1} \Flux(V)_X=H_1(V;\Z)_X  \,.\EE
Suppose $A\!\in\!H_2(X;\Z)$ and $\bs\!\in\!\Z_+^{\ell}$ with $\ell\!>\!0$.
If $\gcd(\bs)$ and $|\cR_X^V|$ are relatively prime, then 
the IP-counts for $(X,V,\om)$ in degree~$A$ with relative contacts~$\bs$
are independent of the choice of the lift~\eref{evXVlift_e} and are thus determined by $(X,V,\om)$.
If in addition
\BE{EqRelGWs_e2} \rk_{\Z}H_1(V;\Z)_X\in\{0,1\},\EE
then these IP-counts are 
the same as the corresponding  GW-invariants of~$(X,V,\om)$.
\end{thm}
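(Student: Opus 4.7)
The plan is to reduce both claims of Theorem~\ref{EqRelGWs_thm} to a single intermediate statement: under the given hypotheses, the deck group $\Deck(\pi_{X;\bs}^V)$ acts trivially on $H^*(\wh{V}_{X;\bs};\Q)$. Once this is granted, the first claim is immediate: any two lifts $\wt\ev_1,\wt\ev_2$ of the morphism~\eref{evdfn_e2} differ by some $g\!\in\!\Deck(\pi_{X;\bs}^V)$, so that
$\wt\ev_2^{\,*}\al\!=\!\wt\ev_1^{\,*}(g^*\al)\!=\!\wt\ev_1^{\,*}\al$
on cohomology, and integration against $[\ov\fM_{g,k;\bs}^V(X,A)]^{\vir}$ produces the same number for every $\al\!\in\!H^*(\wh{V}_{X;\bs};\Q)$.

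To establish this triviality, I first use the coprimality hypothesis: since $\gcd(\bs)$ is relatively prime to $|\cR_X^V|$, multiplication by $\gcd(\bs)$ is an automorphism of $\cR_X^V$, forcing $\cR_{X;\bs}'^{\,V}\!=\!\cR_X^V$ in~\eref{DeckGr_e}. The deck group thus collapses to a single copy of $\cR_X^V$, which is identified with $H_1(V;\Z)_X$ via~\eref{cRXVvsH1V_e}. By the flux hypothesis~\eref{EqRelGWs_e1}, every class in $H_1(V;\Z)_X$ is represented by a loop $\Psi_t\!:V\!\lra\!V$ of self-homeomorphisms. I would promote such a loop to a loop of self-homeomorphisms of $V_{\bs}\!=\!V^\ell$ (acting on an appropriate combination of factors, chosen to account for the multiplicities $\bs$), lift this loop to a path of self-homeomorphisms of $\wh{V}_{X;\bs}$ starting at the identity, and identify the endpoint with the deck transformation associated to the given class. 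Since isotopic self-maps induce the same map on rational cohomology, the deck action is then trivial.

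For the second claim, I would show that the pullback $(\pi_{X;\bs}^V)^*\!:H^*(V_{\bs};\Q)\!\lra\!H^*(\wh{V}_{X;\bs};\Q)$ is surjective under the rank hypothesis~\eref{EqRelGWs_e2}; combined with~\eref{evfactor_e}, this converts each IP-count into the corresponding usual GW-invariant. When $\rk_{\Z}H_1(V;\Z)_X\!=\!0$, the cover~\eref{IPcov_e} is finite, and the transfer homomorphism together with triviality of the deck action makes $(\pi_{X;\bs}^V)^*$ an isomorphism onto $H^*(\wh{V}_{X;\bs};\Q)^{\Deck(\pi_{X;\bs}^V)}\!=\!H^*(\wh{V}_{X;\bs};\Q)$. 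When the rank is~$1$, the deck group is $\Z$ modulo finite torsion, so rationally the Cartan-Leray spectral sequence of~\eref{IPcov_e} is concentrated in two columns; with the trivial action it degenerates to short exact sequences
$$0\lra H^{q-1}(\wh{V}_{X;\bs};\Q)\lra H^q(V_{\bs};\Q)\lra H^q(\wh{V}_{X;\bs};\Q)\lra 0,$$
whose right-hand map is $(\pi_{X;\bs}^V)^*$, again surjective. The main obstacle I anticipate is the explicit construction of the lifted isotopy on $\wh{V}_{X;\bs}$ in the middle paragraph: one must verify that the chosen promotion of a flux loop on $V$ to a loop on $V^{\ell}$ lifts to precisely the intended deck transformation, exploiting the coprimality hypothesis to extract the necessary integer coefficients from the multiplicities~$\bs$.
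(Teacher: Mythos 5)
Your proposal follows the same architecture as the paper's: coprimality collapses the deck group~\eref{DeckGr_e} to a single copy of $\cR_X^V\!\approx\!H_1(V;\Z)_X$, the flux hypothesis~\eref{EqRelGWs_e1} kills the deck action on $H^*(\wh{V}_{X;\bs};\Q)$ by realizing each deck transformation as the endpoint of a lifted isotopy (this is exactly Lemma~\ref{rimtoriFlus_lmm}), which gives the first claim and makes the second inclusion in~\eref{cohcover_e} an equality; and then the rank hypothesis~\eref{EqRelGWs_e2} makes $(\pi_{X;\bs}^V)^*$ surjective, giving the first inclusion and hence the second claim. The only place you genuinely deviate is in proving the surjectivity of $(\pi_{X;\bs}^V)^*$ when $\rk_{\Z}H_1(V;\Z)_X\!=\!1$: you invoke the Cartan--Leray/Borel spectral sequence of the cover, whereas the paper (Lemma~\ref{CohSurj_lmm2}) constructs an explicit $\Z\!\times\!G$-invariant cocycle on $\R\!\times\!\wt{V}$ descending to the Borel construction. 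The paper's prose in fact remarks that the spectral sequence is an acceptable alternative; it opts for the hands-on cocycle because it did not find a citable reference and wanted the argument self-contained. Either route works; the spectral-sequence version is shorter, the cochain version is more elementary.

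Two small points worth tightening. First, your phrase ``multiplication by $\gcd(\bs)$ is an automorphism of $\cR_X^V$'' only parses as stated when $\cR_X^V$ is finite; in the infinite case the paper's convention is that ``relatively prime'' means $\gcd(\bs)\!=\!1$, and the conclusion $\cR_{X;\bs}'^{\,V}\!=\!\gcd(\bs)\cR_X^V\!=\!\cR_X^V$ then holds trivially, so no harm is done but the wording should distinguish the two regimes. Second, the step you flag as the ``main obstacle'' --- promoting a flux loop on $V$ to a deck transformation of $\wh{V}_{X;\bs}$ with a prescribed image in $\cR_X^V$ --- is not as delicate as you fear: since $\Flux(V)_{H_X^V}\!=\!\cR_X^V$ and $\cR_{X;\bs}'^{\,V}\!=\!\gcd(\bs)\cR_X^V\!=\!\cR_X^V$, every class in $\cR_X^V$ is $\sum_i s_i[\ga_i]$ for suitable $\ga_i\!\in\!\Flux(V)$; combining the corresponding loops of homeomorphisms componentwise on $V^\ell$, as in the proof of Lemma~\ref{rimtoriFlus_lmm}, produces a loop on $V_{\bs}$ whose lift to $\wh{V}_{X;\bs}$ connects the identity to the desired deck transformation.
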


\noindent
If the rim tori module $\cR_X^V\!\approx\!H_1(V;\Z)_X$ is infinite, 
we call $\gcd(\bs)$ and $|\cR_X^V|$ \sf{relatively prime} if $\gcd(\bs)\!=\!1$.
Let 
$$H^*\big(\wh{V}_{X;\bs};\Q\big)^{\pi_{X;\bs}^V}
=\big\{\eta\!\in\!H^*\big(\wh{V}_{X;\bs};\Q\big)\!:\,g^*\eta\!=\!\eta~
\forall~g\!\in\!\Deck(\pi_{X;\bs}^V)\big\}.$$
In general, the set of GW-invariants of $(X,V,\om)$ in degree~$A$ with relative contacts~$\bs$
with all possible cohomology insertions can be identified with 
the subset of IP-counts with the cohomology insertions~in 
\BE{cohcover_e}
\pi_{X;\bs}^{V\,*}H^*\big(V_{\bs};\Q\big) \subset 
H^*\big(\wh{V}_{X;\bs};\Q\big)^{\pi_{X;\bs}^V} \subset
H^*\big(\wh{V}_{X;\bs};\Q\big);\EE
the GW-invariants and IP-counts with such insertions are the same by~\eref{evfactor_e}.
The substance of the first conclusion of Theorem~\ref{EqRelGWs_thm} is that
the second inclusion in~\eref{cohcover_e} is an equality, as any two lifts~\eref{evXVlift_e}
are related by an element of $\Deck(\pi_{X;\bs}^V)$.
The substance of the second conclusion of Theorem~\ref{EqRelGWs_thm} is that both inclusions
in~\eref{cohcover_e} are equalities.
The cohomology homomorphism~$\pi_{X;\bs}^{V\,*}$ may still not be injective;
the GW-invariants of $(X,V,\om)$ with insertions in its kernel vanish and thus can be disregarded.\\

\noindent
Theorem~\ref{EqRelGWs_thm} is established at the end of Section~\ref{RTCov_subs}.
We show that~\eref{EqRelGWs_e1} implies
that the second inclusion in~\eref{cohcover_e} is in fact an equality. 
The first inclusion in~\eref{cohcover_e} is an equality if~\eref{EqRelGWs_e2} holds; 
see Corollary~\ref{CohSurj_crl}.
Both inclusions in~\eref{cohcover_e} are equalities if $V\!=\!\T^{2n-2}$  
and the cover $\wh{V}_{X;\bs}$ is connected, 
as can be seen by considering all connected covers of~tori.
From this observation, we obtain the following conclusion concerning
IP-counts relative to tori. 

\begin{prp}\label{EqRelGWs_prp}
Suppose $(X,\om)$ is a compact symplectic manifold and
$V\!\subset\!X$ is a symplectic divisor
such that $V\!\approx\!\T^{2n-2}$.
Let $A\!\in\!H_2(X;\Z)$ and $\bs\!\in\!\Z_+^{\ell}$ with $\ell\!>\!0$.
If $\gcd(\bs)$ and $|\cR_X^V|$ are relatively prime, then the IP-counts
for $(X,V,\om)$ in degree~$A$ with relative contacts~$\bs$ are 
the same as the corresponding  GW-invariants of~$(X,V,\om)$.
\end{prp}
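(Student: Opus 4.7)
The plan is to leverage Theorem~\ref{EqRelGWs_thm} together with the observation stated just before Proposition~\ref{EqRelGWs_prp}: for $V\!\cong\!\T^{2n-2}$ with connected cover $\wh{V}_{X;\bs}$, both inclusions in~\eref{cohcover_e} become equalities. The strategy is to check the hypothesis~\eref{EqRelGWs_e1} of that theorem, verify the cover's connectedness under the coprimality assumption, and then combine these to conclude.

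First, since $V\!\cong\!\T^{2n-2}$, every class in $H_1(V;\Z)$ is realized by a translation loop of the torus, so $\Flux(V)\!=\!H_1(V;\Z)$ and therefore $\Flux(V)_X\!=\!H_1(V;\Z)_X$ regardless of the subgroup $H_X^V$. This verifies~\eref{EqRelGWs_e1}, so by the first conclusion of Theorem~\ref{EqRelGWs_thm} the IP-counts for $(X,V,\om)$ in degree~$A$ with contacts~$\bs$ are independent of the choice of lift~\eref{evXVlift_e}.

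Second, I would argue the cover $\wh{V}_{X;\bs}$ is connected. Since $V$ is connected, the formula $\cR_{X;\bs}'^{\,V}\!=\!\gcd(\bs)\cR_X^V$ from the introduction applies. The coprimality of $\gcd(\bs)$ and $|\cR_X^V|$ (interpreted as $\gcd(\bs)\!=\!1$ in the infinite case) makes multiplication by $\gcd(\bs)$ an automorphism of $\cR_X^V$, so $\cR_{X;\bs}'^{\,V}\!=\!\cR_X^V$ and the quotient factor $\cR_X^V/\cR_{X;\bs}'^{\,V}$ in~\eref{DeckGr_e} is trivial. This factor controls the number of connected components of the regular cover~\eref{IPcov_e}, as is consistent with the examples $\wh{V}_{X;()}\!=\!\cR_X^V$ (with $|\cR_X^V|$ components) and $\wh{V}_{X;(1)}\!=\!\wh{V}_X$ (connected) in the introduction, so $\wh{V}_{X;\bs}$ is connected.

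Third, with $V\!\cong\!\T^{2n-2}$ and $\wh{V}_{X;\bs}$ connected, I would invoke the cited observation: the connected cover $\wh{V}_{X;\bs}$ of the torus $V_{\bs}\!\cong\!\T^{(2n-2)\ell}$ is of the form $\T^k\!\times\!\R^{N-k}$, so $\pi_{X;\bs}^{V\,*}$ is surjective on rational cohomology (both sides being exterior algebras on $H^1$, with the map on $H^1$ dual to an injection of lattices) and the deck-group action by translations is homotopic to the identity and hence trivial on $H^*$. Thus both inclusions in~\eref{cohcover_e} collapse, giving $\pi_{X;\bs}^{V\,*}H^*(V_{\bs};\Q)\!=\!H^*(\wh{V}_{X;\bs};\Q)$, and by~\eref{evfactor_e} every IP-count equals a GW-invariant. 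The main obstacle is the second step: confirming that the first factor in~\eref{DeckGr_e} indeed records the permutation action of the deck group on connected components of~\eref{IPcov_e}, which requires the explicit construction of the cover in Section~\ref{RTCov_subs} rather than just the abstract isomorphism type of the deck group.
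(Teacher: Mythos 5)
Your argument is correct and follows essentially the same route as the paper: coprimality forces $\cR_{X;\bs}'^{\,V}\!=\!\cR_X^V$ so that $\wh{V}_{X;\bs}\!=\!\wh{V}_{X;\bs}'$ is connected, and then the explicit form of connected covers of tori makes both inclusions in~\eref{cohcover_e} equalities, which (with~\eref{evfactor_e}) identifies all IP-counts with GW-invariants; your first step via~\eref{EqRelGWs_e1} is redundant once the stronger conclusion is in hand. The obstacle you flag at the end is not one: by the definition~\eref{HbsCov_e}, the cover is literally the product of the discrete set $\cR_X^V/\cR_{X;\bs}'^{\,V}$ with the connected space $\wh{V}_{X;\bs}'$ (a quotient of the connected maximal abelian cover), so that factor exactly enumerates the components.
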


\begin{rmk}\label{EqRelGWs_rmk0}
As pointed out by B.~Wieland on {\it MathOverflow}, 
the first inclusion in~\eref{cohcover_e} can fail to be an equality as soon as $H_1(V;\Z)_X$ 
is at least~$\Z^2$.
It can fail to be an equality even if $\pi_1(V)$ is free abelian.
\end{rmk}

\noindent
Theorem~\ref{EqRelGWs_thm} and Proposition~\ref{EqRelGWs_prp} do not provide
any new information about the GW-invariants of~$(X,V,\om)$.
However, they can be useful in refining the usual symplectic sum formula in
a narrow set of cases. 
This formula expresses certain sums of GW-invariants of one symplectic manifold
in terms of relative GW-invariants of simpler manifolds;
Theorem~\ref{EqRelGWs_thm} and Proposition~\ref{EqRelGWs_prp} imply that 
all the summands in each given sum are the same in these cases.\\

\noindent
Generalizations of Theorem~\ref{EqRelGWs_thm} and of Proposition~\ref{EqRelGWs_prp} 
to a disconnected divisor~$V$ are described in Remark~\ref{EqRelGWs_rmk}.

\subsection{Outline of the paper}
\label{outline_subs}

\noindent
The relevant setting for relative GW-invariants and the symplectic sum formula
is the codimension $\fc\!=\!2$ case of the topological setup of Section~\ref{GenTopolSetup_sec}.
As restricting to the $\fc\!=\!2$ case carries no benefit, we consider the general case
to the extent possible.
Sections~\ref{cuthom_subs}, \ref{gluehom_subs}, and~\ref{gluecoh_subs} 
are concerned with changes in the topology of manifolds under surgery
that are directly relevant in the symplectic sum context.
The rim tori module~$\cR_X^V$ and the vanishing cycles module~$\cR_{X,Y}^V$
are described explicitly in Sections~\ref{rimtori_subs} and~\ref{vancyc_subs}, respectively, 
with the aim of easily computing them in many situations.
Section~\ref{gluerimtori_subs} compares the rim tori modules before and after surgery.
The notation for the abelian covers relevant for our purposes is introduced 
in Section~\ref{AbCovNotat_subs};
some of their properties, focusing on finite generation of the (co)homology, 
are discussed in Section~\ref{AbCovProper_subs}.
In Section~\ref{RTCov_subs}, 
we define the intended rim tori coverings~\eref{IPcov_e} of \cite[Section~5]{IPrel} 
as special cases of the abelian covers of  Section~\ref{AbCovNotat_subs},
show that the evaluation morphisms~\eref{evdfn_e2} 
lift to these covers, and establish Theorem~\ref{EqRelGWs_thm}.
In Section~\ref{RTCov_subs2}, we show that these lifts can be chosen systematically,
in respect to the intended applications in~\cite{IPsum} and the diagram in Figure~\ref{IPev_fig};
see Theorem~\ref{RimToriAct_thm}.
Theorem~\ref{RelGW_thm} is established in Section~\ref{RelGWThm_subs}.\\

\noindent
The main purpose of this paper is to investigate the topological aspects of the rim tori
refinement to the standard relative GW-invariants 
in preparation for considering its applicability in the symplectic sum context in~\cite{GWsumIP}. 
We pre-suppose a suitable analytic framework for the construction of relative GW-invariants and
describe the necessary steps to implement 
the idea of \cite[Section~5]{IPrel} as an enhancement on existing constructions.
We deduce some qualitative applications arising from this refinement and 
discuss its usability for quantitative purposes.
A significant number of examples are included for illustrative purposes.\\

\noindent
The authors would like to thank   E.~Ionel, D.~McDuff, M.~McLean, J.~Milnor, D.~Ruberman,
J.~Starr, M.~Wendt, and B.~Wieland for enlightening discussions.

\section{General topological context}
\label{GenTopolSetup_sec}

\noindent
The symplectic sum construction is a surgery operation that cuts out tubular neighborhoods of 
a common submanifold from two manifolds and glues the remainders along the boundaries of 
the two tubular neighborhoods.
Below we discuss central topological aspects of this construction from a more general perspective. 
Throughout this paper, by a \sf{manifold} we will mean a smooth manifold.

\subsection{Complement of submanifold}
\label{prelimcut_subs}

\noindent
Let $X$ be an oriented manifold, $V\!\subset\!X$ be a compact oriented submanifold of codimension~$\fc$,
$S_XV\!\subset\!\cN_XV$ be the sphere subbundle of the normal bundle of~$V$ in~$X$, 
and
\BE{cRXVdfn_e}\cR_X^V\equiv \ker\big\{\io^X_{X-V*}\!:
H_{\fc}(X\!-\!V;\Z)\!\lra\!H_{\fc}(X;\Z)\big\}.\EE
By Lemma~\ref{rimtori_lmm}, each element of $\cR_X^V$ can be represented by a cycle of 
the form $\io_{S_XV}^{X-V}(S_XV|_{\ga})$ for some loop $\ga\!\subset\!V$;
see the end of Section~\ref{cuthom_subs}.
In the $\fc\!=\!2$ case, i.e.~as in~\eref{cRXVdfn_e0}, these cycles are called \sf{rim tori}
in~\cite{IPrel,IPsum}.\\

\noindent
Suppose in addition that $f\!:Z\!\lra\!X$  is an $\fc$-pseudocycle,
as in \cite[Section~1.1]{Z}, and  $x\!\in\!f^{-1}(V)$ is an isolated point.
We define \sf{the order of contact of~$f$ with~$V$ at~$x$}, $\ord_x^Vf\!\in\!\Z$,
as follows.
On a small neighborhood of~$x$, $f$ can be homotoped without changing its intersection with $V$
so that it takes 
a small sphere $S_Zx$ in $T_xZ$ to a small sphere $S_XV|_{f(x)}\subset\cN_VX$;
the number $\ord_x^Vf$ is the degree of this map.
This definition agrees with the definition used in the construction
of $\ov\fM_{g,k;\bs}^V(X,A)$.\\

\noindent
We now combine two pseudocycles with the same contacts with~$V$ 
into a pseudocycle to~$X\!-\!V$.
Suppose
$$f\!:(Z,x_1,\ldots,x_{\ell})\lra(X,V)\qquad\hbox{and}\qquad 
f'\!:(Z',x_1',\ldots,x_{\ell}')\lra (X,V)$$ 
are two $\fc$-pseudocycles such that 
\begin{equation*}\begin{split}
f^{-1}(V)=\{x_1,\ldots,x_{\ell}\},&\qquad f'^{-1}(V)=\{x_1',\ldots,x_{\ell}'\},\\
f(x_i)=f'(x_i'),\quad
\ord_{x_i}^Vf&=\ord_{x_i'}^Vf'\qquad\forall\,i\!=\!1,2,\ldots,\ell.
\end{split}\end{equation*}
We can then obtain a smooth map $f\!\#\!(-f')\!:Z\!\#Z'\!\lra X\!-\!V$ by
\begin{enumerate}[label=$\bullet$,leftmargin=*]
\item removing small balls $B_{x_i}$ and $B_{x_i'}$ around each of the points $x_i$ and $x_i'$
to form manifolds with boundary $\hat{Z}$ and~$\hat{Z}'$,
\item forming a smooth oriented manifold $Z\!\#(-Z')$ by
identifying the $i$-th boundary components of $\hat{Z}$ and~$\hat{Z}'$ by 
an orientation-preserving diffeomorphism $\vph_i\!:(\prt\hat{Z})_i\!\lra\!(\prt\hat{Z}')_i$
for each~$i$,
\item homotoping $f$ and $f'$ on small neighborhoods of $\prt B_{x_i}$ and $\prt B_{x_i'}$ 
within a small ball around $f(x_i)\!=\!f'(x_i')$ in~$X$ so that
$f\!=\!f'\!\circ\!\vph_i$ for all~$i$.
\end{enumerate}
The last condition is achievable because the degrees of
$f,f'\!\circ\!\vph_i\!: S^{\fc-1}\lra S^{\fc-1}$
are the same and the degree homomorphism $\pi_{\fc-1}(S^{\fc-1})\!\lra\!\Z$ is an isomorphism
if $\fc\!\ge\!2$.\\

\noindent
The above construction~of $f\!\#\!(-f')$
depends only on $f$, $f'$, and choices of degree~1 maps from 
$\ell$ disjoint copies of $[0,1]\!\times\!S^{\fc-1}$ to $[0,1]\!\times\!S^{\fc-1}$.
Thus,  $f$ and $f'$ completely determine the homology class of $f\!\#\!(-f')$.
If in addition $[f]\!=\![f']$ in $H_{\fc}(X;\Z)$, then $[f\!\#\!(-f')]\!\in\!\cR_X^V$.

\subsection{Splice of two manifolds}
\label{prelimglue_subs}
 
\noindent
If $X$ and $Y$ are manifolds, $V\!\subset\!X,Y$ is a closed submanifold,
and $\vph\!:S_XV\!\lra\!S_YV$ is a diffeomorphism commuting with the projections to~$V$, 
let $X\!\#_{\vph}\!Y$ be the  manifold
obtained by gluing the complements of tubular neighborhoods of $V$ in $X$ and~$Y$
by~$\vph$ along their common boundary.
If $X$ and $Y$ are oriented and $\vph$ is orientation-reversing, then
$X\!\#_{\vph}\!Y$ is oriented as~well.\\

\noindent
We denote~by 
$$q_{\vph}\!:X\!\#_{\vph}\!Y\lra X\!\cup_V\!Y$$
a continuous map which restricts to the identity outside of a tubular neighborhood
of $S_XV\!=_{\vph}\!S_YV$, is a diffeomorphism on the complement of $q_{\vph}^{-1}(V)$,
and restricts to the bundle projection $S_XV\!\lra\!V$.
We will call such a map~$q_{\vph}$ a \textsf{collapsing map}.
If $\fc$ is the codimension of $V$ in $X$ and $Y$, let
\BE{cRXYVdfn_e}
\cR_{X,Y}^V\equiv \ker\big\{q_{\vph_*}\!:\,H_{\fc}(X\!\#_{\vph}\!Y;\Z)\lra 
H_{\fc}(X\!\cup_V\!Y;\Z)\big\}.\EE
By the $m\!=\!\fc$ case of Lemma~\ref{cRXYV_lmm}, this collection of \sf{vanishing cycles}
is the span of the images of $\cR_X^V$  and $\cR_Y^V$
under the homology homomorphisms induced by the inclusions
$$\io^{X\#_{\vph}Y}_{X-V}\!:\,X\!-\!V\lra X\!\#_{\vph}\!Y \qquad\hbox{and}\qquad
\io^{X\#_{\vph}Y}_{Y-V}\!:\,Y\!-\!V\lra X\!\#_{\vph}\!Y, $$
respectively.\\

\noindent
Suppose in addition $X$, $Y$, and $V$ are compact and oriented
and $V_1,\ldots,V_N$ are the topological components of~$V$. 
Let
\BE{HfcXTdfn_e}\begin{split}
H_{\fc}(X;\Z)\!\times_V\!H_{\fc}(Y;\Z)&=
\big\{(A_X,A_Y)\!\in\!H_{\fc}(X;\Z)\!\times\!H_{\fc}(Y;\Z)\!:\\
&\hspace{1in}
A_X\!\cdot_X\!V_r=A_Y\!\cdot_Y\!V_r~~\forall\,r\!=\!1,\ldots,N\big\},
\end{split}\EE
where $\cdot_X$ and $\cdot_Y$ denote the homology intersection pairings in~$X$ and~$Y$,
respectively.
Given an orientation-reversing diffeomorphism~$\vph$,
we describe below an operation gluing $\fc$-cycles in $X$ and $Y$ into $\fc$-cycles in 
$X\!\#_{\vph}\!Y$. 
It induces a homomorphism 
\BE{homsumdfn_e} H_{\fc}(X;\Z)\!\times_V\!H_{\fc}(Y;\Z)
\lra H_{\fc}(X\!\#_{\vph}\!Y;\Z)/\cR_{X,Y}^V, \qquad
(A_X,A_Y)\lra A_X\!\#_{\vph}\!A_Y,\EE
which is central to the symplectic sum formula for GW-invariants.\\

\noindent
Suppose
$$f_X\!:(Z_X,x_1,\ldots,x_{\ell})\lra(X,V) \qquad\hbox{and}\qquad  
f_Y\!:(Z_Y,y_1,\ldots,y_{\ell})\lra(Y,V)$$ 
are $\fc$-pseudocycles with boundary disjoint from~$V$ such that 
\begin{gather*}
f_X^{-1}(V)=\{x_1,\ldots,x_{\ell}\},\qquad f_Y^{-1}(V)=\{y_1,\ldots,y_{\ell}\},\\
f_X(x_i)=f_Y(y_i),\quad
\ord_{x_i}^Vf_X=\ord_{y_i}^Vf_Y\qquad\forall\,i\!=\!1,2,\ldots,\ell.
\end{gather*}
We can then obtain a smooth map $f_X\!\#_{\vph}\!f_Y\!:Z_X\!\#Z_Y\!\lra X\!\#_{\vph}\!Y$ by
\begin{enumerate}[label=$\bullet$,leftmargin=*]
\item removing small balls $B_{x_i}$ and $B_{y_i}$ around each of the points $x_i$ and $y_i$
to form manifolds with boundary $\hat{Z}_X$ and~$\hat{Z}_Y$,
\item forming a smooth oriented manifold $Z_X\!\#Z_Y$ by
identifying the $i$-th boundary components of $\hat{Z}_X$ and~$\hat{Z}_Y$ by 
an orientation-reversing diffeomorphism $\vph_i\!:(\prt\hat{Z}_X)_i\!\lra\!(\prt\hat{Z}_Y)_i$ 
for each~$i$,
\item homotoping $f_X$ and $f_Y$ on small neighborhoods of $\prt B_{x_i}$ and $\prt B_{y_i}$ 
within small balls around $f_X(x_i)$ in $X$ and $f_Y(y_i)$ in $Y$ so that
$\vph\!\circ\!f_X\!=\!f_Y\!\circ\!\vph_i$ for all~$i$.
\end{enumerate}
The last condition is achievable because the degrees of
$$\vph\!\circ\!f_X\!\circ\!\vph_i^{-1},f_Y\!: S^{\fc-1}\lra S^{\fc-1}$$
are the~same.\\

\noindent 
The above construction of $f_X\!\#_{\vph}\!f_Y$ depends only on $f_X$, $f_Y$, and 
choices of degree~$-1$ maps from $\ell$ disjoint copies of $[0,1]\!\times\!S^{\fc-1}$ to 
$[0,1]\!\times\!S^{\fc-1}$.
Thus, $f_X$ and $f_Y$ completely determine the homology class of $f_X\!\#_{\vph}\!f_Y$.
Furthermore,
$$q_{\vph*}\big([f_X\!\#_{\vph}\!f_Y]\big)=
\io^{X\cup_VY}_{X*}\big([f_X)]\big)\!+\!\io^{X\cup_VY}_{Y*}\big([f_Y]\big)
\in H_{\fc}(X\!\cup_{\vph}\!Y;\Z).$$
If $[f_X]\!=\![f_X']$ in $H_{\fc}(X;\Z)$, $[f_X\!\#\!(-f_X')]\!\in\!\cR_X^V$
by Section~\ref{prelimcut_subs}.
Thus, the homology class of $f_X\!\#_{\vph}\!f_Y$ in $X\!\#_{\vph}\!Y$
as above is determined by the homology classes of $f_X$ in $X$ and $f_Y$ in~$Y$
only up to an element of~$\cR_{X,Y}^V$.\\

\noindent
Suppose  $(X,\om_X)$ and $(Y,\om_Y)$ are symplectic manifolds
with a common compact symplectic divisor $V\!\subset\!X,Y$ such that
$$e(\cN_XV)=-e(\cN_YV)\in H^2(V;\Z).$$
The symplectic sum construction of \cite{Gf,MW} then produces 
a symplectic manifold $(X\!\#_V\!Y,\om_{\#})$ of the form $X\!\#_{\vph}\!Y$.
Let $\eta$ be a coset of $H_2(X\!\#_V\!Y;\Z)$ modulo~$\cR_{X,Y}^V$.
According to the symplectic sum formulas of \cite{LR,Jun2},
the sum of the genus~$g$ GW-invariants of $X\!\#_V\!Y$ in degrees $A\!\in\!\eta$
is the same as the sum of the genus~$g$ GW-invariants of
$(X,V,\om_X)$ and $(Y,V,\om_Y)$ of degrees $A_X$ and~$A_Y$ such that
 $A_X\!\#_{\vph}\!A_Y\!=\!\eta$.
It would of course be preferable to express individual GW-invariants of $X\!\#_V\!Y$
in terms of relative GW-invariants of $(X,V,\om_X)$ and~$(Y,V,\om_Y)$.
The rim tori refinement of standard relative invariants is suggested in~\cite{IPrel}
with the aim of resolving this deficiency in~\cite{IPsum}.
In~\cite{GWsumIP}, we discuss to what extent this is achieved.

\section{Cutting out a submanifold}
\label{cut_sec}

\noindent
We discuss changes in the homology after cutting out a submanifold in Section~\ref{cuthom_subs}.
Lemma~\ref{rimtori_lmm} contains \cite[Lemma 5.2]{IPrel} and 
the corresponding part of the proof of the former is essentially the same
as the proof of the latter.
We use it in Section~\ref{rimtori_subs} to give an explicit description of the rim tori module~$\cR_X^V$
and to compare it with the rim tori module~$\cR_X^{U\cup V}$ for a submanifold with additional connected
components.

\subsection{Changes in homology}
\label{cuthom_subs}

\noindent
Given a manifold $X$ and a closed submanifold $V\!\subset\!X$, 
we will view  the sphere subbundle~$S_XV$ of the normal bundle~$\cN_XV$ of $V$ in $X$
as a hypersurface in~$X$.
If in addition $V$ is compact oriented and the codimension of $V$ in $X$ is~$\fc$, 
we define
\BE{DeSV_e}
\De_X^V\!: H_m(V;\Z)\lra H_{m+\fc-1}(S_XV;\Z), \qquad
\De_X^V(\ga)=\PD_{S_XV}\big(q_X^{V\,^*}(\PD_V\ga)\big),\EE
where $q_X^V\!:S_XV\!\lra\!V$ is the projection map.
If $X$ is also compact, let
$$\cap V\!: H_*(X;\Z)\lra H_{*-\fc}(V;\Z),\qquad
A\cap V=\PD_V\big((\PD_XA)|_V\big).$$
If $U\!\subset\!X\!-\!V$ is another subset (possibly empty), we~take 
$$\cap V\!:\,  
H_*(X\!-\!U;\Z) \stackrel{\io_{X-U*}^X}{\xra{1.2}} H_*(X;\Z) \stackrel{\cap V}{\lra} H_{*-\fc}(V;\Z)$$
to be the composition.
Let
\BE{H1VXdfn_e} H^V_{X-U}=
\big\{A\!\cap\!V\!: A\!\in\!H_{\fc+1}(X\!-\!U;\Z)\big\}\subset H_1(V;\Z), \quad
H_1(V;\Z)_{X-U}=\frac{H_1(V;\Z)}{H^V_{X-U}}.\EE

\begin{lmm}\label{rimtori_lmm}
Suppose $X$ is a compact oriented manifold, $V\!\subset\!X$ is a compact oriented submanifold
of codimension~$\fc$, and $U\!\subset\!X\!-\!V$ is a compact subset.
Then,  the sequence
\BE{rimtori_e}\begin{split}
\ldots&\xra{1.7} H_m(X\!-\!U\!\cup\!V) \stackrel{\io_{X-U\cup V*}^{X-U}}{\xra{1.2}}
H_m(X\!-\!U) \stackrel{\cap V}{\xra{.4}} H_{m-\fc}(V)\\
&\stackrel{\io_{S_XV*}^{X-U\cup V}\!\circ\De_X^V}{\xra{1.7}}\! 
H_{m-1}(X\!-\!U\!\cup\!V) \xra{1}\ldots
\end{split}\EE
is exact for any coefficient ring.
\end{lmm}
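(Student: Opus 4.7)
The plan is to identify the sequence \eref{rimtori_e} with the long exact sequence of the pair $(X\!-\!U,\,X\!-\!U\!\cup\!V)$, after reinterpreting the relative homology group and the connecting homomorphisms. Because $U$ and $V$ are disjoint compact subsets of $X$, I can pick a closed tubular neighborhood $N$ of $V$ disjoint from~$U$ and identify $(N,\prt N)$ with the normal disk-sphere bundle pair $(D\cN_XV, S_XV)$. Excising the complement of $N^{\circ}$ in $X\!-\!U$ yields
$$H_m(X\!-\!U,\,X\!-\!U\!\cup\!V)\ \approx\ H_m(N,\,N\!\setminus\!V)\ \approx\ H_m(D\cN_XV,\,S_XV)\ \stackrel{\Phi}{\approx}\ H_{m-\fc}(V),$$
where $\Phi$ is the homological Thom isomorphism of the oriented rank-$\fc$ bundle $\cN_XV\!\to\!V$ (applicable since $V$ is compact and oriented). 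Substituting these identifications into the pair long exact sequence produces a sequence of the form~\eref{rimtori_e}; what remains is to match the outgoing and incoming maps at $H_{m-\fc}(V)$ with $\cap V$ and $\io^{X-U\cup V}_{S_XV*}\!\circ\!\De_X^V$, respectively.

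For the outgoing map, given $A\!\in\!H_m(X\!-\!U)$, naturality of Poincar\'e duality under the inclusion $N\!\hookrightarrow\!X$ shows that the restriction of $\PD_X(\io^X_{X-U*}A)$ to $N$ is Thom-dual (in the disk bundle) to $(\PD_X A)|_V$, so applying $\Phi$ returns $\PD_V((\PD_X A)|_V)\!=\!A\!\cap\!V$. For the incoming map, the connecting homomorphism of the pair factors under the above excision and the retraction $N\!\setminus\!V\!\simeq\!S_XV$ as
$$H_m(D\cN_XV,S_XV)\stackrel{\partial}{\lra}H_{m-1}(S_XV)\stackrel{\io^{X-U\cup V}_{S_XV*}}{\xra{1.2}}H_{m-1}(X\!-\!U\!\cup\!V),$$
so it suffices to check that $\partial\!\circ\!\Phi^{-1}\!:H_{m-\fc}(V)\!\to\!H_{m-1}(S_XV)$ equals $\De_X^V$. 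Representing a class $\ga\!\in\!H_{m-\fc}(V)$ by a submanifold $C\!\subset\!V$, the Thom-inverse of $\ga$ is the relative class of $D\cN_XV|_C$, whose boundary in $S_XV$ is $S_XV|_C\!=\!(q_X^V)^{-1}(C)$; this latter class is Poincar\'e-dual in~$S_XV$ to $q_X^{V*}(\PD_V C)$ by naturality of Poincar\'e duality under the submersion~$q_X^V$, and thus equals $\De_X^V(\ga)$ by definition.

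The main obstacle lies in this last step: reconciling the formal definition of $\De_X^V$ via cohomological pullback and two Poincar\'e dualities with its geometric incarnation as the boundary of the disk subbundle over a cycle. This compatibility is standard but is the one place where the two formulations of duality must be carefully matched; once it is in hand, exactness of~\eref{rimtori_e} is immediate from exactness of the pair long exact sequence, and the argument goes through over an arbitrary coefficient ring because the Thom isomorphism, excision, and naturality of Poincar\'e duality are all valid with such coefficients (the orientability hypotheses make the Thom class and fundamental classes integral).
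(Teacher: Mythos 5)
Your proof is correct, but it takes a genuinely different route from the paper's. The paper never passes to relative homology: it Poincar\'e-dualizes the Gysin sequence of $S_XV\!\lra\!V$ to get~\eref{GysinSeq_e}, writes down Mayer--Vietoris for $X\!-\!U=(X\!-\!U\!\cup\!V)\cup_{S_XV}V$ as~\eref{MSseqX_e}, identifies the connecting map $\de_X$ with $\De_X^V\circ(\cdot\cap V)$, and then performs an explicit three-position diagram chase using \eref{cap0_e} and \eref{capeuler_e}. You instead recognize~\eref{rimtori_e} as the long exact sequence of the pair $(X\!-\!U,X\!-\!U\!\cup\!V)$ with the relative term computed by excision onto $(D\cN_XV,S_XV)$ and the homological Thom isomorphism, so that exactness is automatic and all the work goes into matching the two maps at $H_{m-\fc}(V)$; that identification is standard and your argument for it is sound. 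Your approach is the more economical one for proving exactness per se; the paper's approach has the side benefit that its chase produces the explicit descriptions of elements of $\cR_{X-U}^V$ as pushforwards of classes from $S_XV$ with $q^V_{X*}=0$, which are reused verbatim in the proofs of Corollaries~\ref{rimtori_crl2} and in Section~\ref{gluehom_subs}, and it only uses sequences (\eref{GysinSeq_e}, \eref{MSseqX_e}) that the paper needs elsewhere anyway. One soft spot in your write-up: not every integral class in $V$ is represented by a submanifold, so the verification of $\prt\circ\Phi^{-1}=\De_X^V$ should either be run with pseudocycles (as the paper does for the analogous geometric representatives, citing~\cite{Z}) or, better, be observed to be purely formal --- the boundary map of $(D\cN_XV,S_XV)$ transported through the Thom and Poincar\'e duality isomorphisms is exactly how the Gysin connecting map $\De_X^V$ arises, so no geometric representatives are needed. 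Signs are immaterial for exactness, as the paper itself notes.
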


\begin{proof}
Taking the Poincare dual of the Gysin sequence for $S_XV\!\lra\!V$, we obtain an exact sequence
\BE{GysinSeq_e}
\ldots\!\stackrel{\De_X^V}{\xra{.7}} 
H_m(S_XV) \stackrel{q_{X*}^V}{\xra{.8}} H_m(V) \stackrel{e(\cN_XV)\cap}{\xra{1.3}} 
H_{m-\fc}(V)  \stackrel{\De_X^V}{\xra{.7}} H_{m-1}(S_XV) \stackrel{q_{X*}^V}{\xra{.8}}
\!\ldots\,.\EE
By the proof of Mayer-Vietoris for $X\!-\!U\!=\!(X\!-\!U\!\cup\!V)\!\cup_{S_XV}\!V$, 
\BE{MSseqX_e}\begin{split}
\ldots\!&\stackrel{\de_X}{\xra{.3}}H_m(S_XV) \stackrel{(\io_{S_XV*}^{X-U\cup V},-q_{X*}^V)}{\xra{2.4}}
H_m(X\!-\!U\!\cup\!V)\!\oplus\!H_m(V)
\stackrel{\io_{X-U\cup V*}^{X-U}+\io_{V*}^{X-U}}{\xra{2}} H_m(X\!-\!U) \\
&\stackrel{\de_X}{\xra{.3}} H_{m-1}(S_XV) \stackrel{(\io_{S_XV*}^{X-U\cup V},-q_{X*}^V)}{\xra{2.4}}
\!\ldots\,,  
\end{split}\EE
the connecting homomorphism $\de_X$ is the composition $\De_X^V\circ(\cdot\!\cap\!V)$,
at least up to sign.
Since 
\BE{cap0_e}\io_{X-U\cup V*}^{X-U}(A)\cap V=0 \qquad\forall\,A\in H_m(X\!-\!U\!\cup\!V),\EE 
the claim now follows from the observation that 
\BE{capeuler_e}\io_{V*}^{X-U}(A)\cap V=e(\cN_XV)\cap A\qquad\forall\,A\in H_m(V),\EE
at least up to sign
(dependent on one's definitions of cup and cap products and Poincare dual);
see below for details.\\

\noindent
The composition of the first two labeled homomorphisms in~\eref{rimtori_e} being~0 is equivalent
to~\eref{cap0_e}.
If $A_{X-U}\!\cap\!V\!=\!0$ for some $A_{X-U}\!\in\!H_m(X\!-\!U)$, then 
$\de_X(A_{X-U})\!=\!0$ and~so
$$ A_{X-U}= \io_{X-U\cup V*}^{X-U}\big(A_{X-U\cup V}\big)+\io_{V*}^{X-U}(A_V)
\quad\hbox{for some}~~ A_{X-U\cup V}\in H_m(X\!-\!U\!\cup\!V),~A_V\!\in\!H_m(V),$$
by the exactness of~\eref{MSseqX_e}.
By $A_{X-U}\!\cap\!V\!=\!0$, \eref{cap0_e}, and~\eref{capeuler_e}, 
$e(\cN_XV)\!\cap\!A_V\!=\!0$ and~so
$$A_V=q_{X*}^V\big(A_{S_XV}\big)  \quad\hbox{for some}~~ A_{S_XV}\in H_m(S_XV)$$
by the exactness of~\eref{GysinSeq_e}.
By the last two displayed expressions,
$$A_{X-U}=  \io_{X-U\cup V*}^{X-U}\big(A_{X-U\cup V}+ \io_{S_XV*}^{X-U\cup V}(A_{S_XV})\big),$$
which establishes the exactness of~\eref{rimtori_e} at $H_m(X\!-\!U)$.\\ 

\noindent
The composition of the last two labeled homomorphisms in~\eref{rimtori_e} being~0
follows from the exactness of~\eref{MSseqX_e} and $\de_X\!=\!\De_X^V\circ(\cdot\!\cap\!V)$.
Suppose
$$\big\{\io_{S_XV*}^{X-U\cup V}\!\circ\De_X^V\big\}(A_V)=0
\quad\hbox{for some}~~A_V\in H_{m-\fc}(V).$$
Since $q_{X*}^V\!\circ\!\De_X^V\!=\!0$ by the exactness of~\eref{GysinSeq_e}, 
$$\De_X^V(A_V)=\de_X\big(A_{X-U}\big)=\De_X^V\big(A_{X-U}\!\cap\!V\big)
\quad\hbox{for some}~~A_{X-U}\in H_m(X\!-\!U)$$
by the exactness of~\eref{MSseqX_e}.
Thus,
$$A_V=A_{X-U}\!\cap\!V+e(\cN_XV)\!\cap\!A_V' 
=\big(A_{X-U}\!+\!\io_{V*}^{X-U}(A_V')\big)\cap V 
\quad\hbox{for some}~~A_V'\in H_m(V)$$
by the exactness of~\eref{GysinSeq_e} and~\eref{capeuler_e}.
This establishes the exactness of~\eref{rimtori_e} at $H_{m-\fc}(V)$.\\ 

\noindent
The vanishing of the composition 
$$\io_{X-U\cup V*}^{X-U}\!\circ\!\big\{\io_{S_XV*}^{X-U\cup V}\!\circ\De_X^V\big\}\!:\,
H_{m-\fc}(V)\lra H_{m-1}(X\!-\!U)$$
follows from the exactness of~\eref{MSseqX_e}.
If $\io_{X-U\cup V*}^{X-U}(A_{X-U\cup V})\!=\!0$ for some 
$A_{X-U\cup V}\!\in\!H_{m-1}(X\!-\!U\!\cup\!V)$, then
$$A_{X-U\cup V}=\io_{S_XV*}^{X-U\cup V}\big(A_{S_XV}\big),~~ q_{X*}^V\big(A_{S_XV}\big)=0 
\quad\hbox{for some}~~A_{S_XV}\in H_{m-1}(S_XV)$$
by the exactness of~\eref{MSseqX_e}.
By the exactness of~\eref{GysinSeq_e} and $q_{X*}^V(A_{S_XV})\!=\!0$,
$$A_{S_XV}=\De_X^V(A_V) \quad\hbox{for some}~~A_V\in H_{m-\fc}(V).$$
This establishes the exactness of~\eref{rimtori_e} at $H_{m-1}(X\!-\!U\!\cup\!V)$.
\end{proof}

\noindent
By \cite[Theorem~1.1]{Z}, every integral homology class in a manifold can be represented by a pseudocycle.
If $V$ is as in~\eref{DeSV_e} and $f\!:Z\!\lra\!V$ is a pseudocycle, then the pseudocycle
$$\pi_2\!: f^*S_XV\equiv\big\{(z,v)\!\in\!Z\!\times\!S_XV\!:\,f(z)\!=\!q_X^V(v)\big\}
\lra S_XV,$$
where $\pi_2\!:Z\!\times\!S_XV\!\lra\!S_XV$ is the projection on the second coordinate,
represents $\De_X^V([f])$.
If $Z\!=\!S^1$, $f^*S_XV\!\lra\!S^1$ is a trivial $S^{\fc-1}$-bundle and 
thus $f$ lifts to a map 
$$\ti{f}\!:S^1\!\times\!S^{\fc-1}\lra \{q_X^V\}^{-1}(f(S^1))\subset S_XV
\qquad\hbox{s.t.}\quad q_X^V\circ\ti{f}=f\circ\pi_1\,,$$
where $\pi_1\!:S^1\!\times\!S^{\fc-1}\!\lra\!S^1$ is the projection on the first component.
Thus, the elements~of the module~$\cR_{X-U}^V$ as in~\eref{cRXVdfn_e}
can be represented by cycles of the form $\io_{S_XV}^{X-U\cup V}(S_XV|_{\ga})$ 
for loops $\ga\!\subset\!V$, according to Lemma~\ref{rimtori_lmm}.

\subsection{The rim tori}
\label{rimtori_subs}

\noindent
We now use Lemma~\ref{rimtori_lmm} to describe the rim tori module~$\cR_{X-U}^V$ explicitly
and to compare it with other such modules.
The first corollary below is an immediate consequence of the $m\!=\!\fc$ case of this lemma.

\begin{crl}\label{rimtori_crl}
Suppose $X$ is a compact oriented $n$-manifold, $V\!\subset\!X$ is a compact oriented submanifold
of codimension~$\fc$, and $U\!\subset\!X\!-\!V$ is a compact subset.
Then, the induced homomorphism
$$\io_{S_XV*}^{X-U\cup V}\!\circ\De_X^V\!: H_1(V;\Z)_{X-U}\lra 
\cR_{X-U}^V \subset H_{\fc}(X\!-\!U\!\cup\!V;\Z)$$
is well-defined and is an isomorphism.
In particular, if the restriction homomorphism 
\BE{rimtoricrl_e} H^{n-\fc-1}(X;\Z)\!\lra\!H^{n-\fc-1}(V;\Z),\EE
is zero, then  $\cR_{X-U}^V\!\approx\!H_1(V;\Z)$.
\end{crl}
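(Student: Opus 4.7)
The plan is to read off both statements directly from the $m\!=\!\fc\!+\!1$ case of Lemma~\ref{rimtori_lmm}, which provides the exact segment
$$ H_{\fc+1}(X\!-\!U) \stackrel{\cap V}{\lra} H_1(V)
\stackrel{\io_{S_XV*}^{X-U\cup V}\circ\De_X^V}{\xra{1.7}} H_\fc(X\!-\!U\!\cup\!V)
\stackrel{\io_{X-U\cup V*}^{X-U}}{\xra{1.2}} H_\fc(X\!-\!U). $$
By the definition of $H^V_{X-U}$ in~\eref{H1VXdfn_e}, the image of the first arrow is precisely $H^V_{X-U}$. Exactness at $H_1(V)$ therefore shows that $\io_{S_XV*}^{X-U\cup V}\circ\De_X^V$ vanishes on $H^V_{X-U}$ and consequently descends to a well-defined, injective homomorphism from the quotient $H_1(V;\Z)_{X-U}$ into $H_\fc(X\!-\!U\!\cup\!V;\Z)$. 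Exactness at $H_\fc(X\!-\!U\!\cup\!V)$, combined with the definition of $\cR_{X-U}^V$ as the kernel of $\io_{X-U\cup V*}^{X-U}$ (the natural analogue of~\eref{cRXVdfn_e}), then identifies the image of this descended map with $\cR_{X-U}^V$, establishing the first claim.

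For the second claim, the plan is to show that the hypothesis on~\eref{rimtoricrl_e} forces $H^V_{X-U}\!=\!\{0\}$, so that $H_1(V;\Z)_{X-U}\!=\!H_1(V;\Z)$ and the isomorphism of the first part delivers the conclusion. The cap-with-$V$ homomorphism on $H_{\fc+1}(X\!-\!U;\Z)$ factors through $H_{\fc+1}(X;\Z)$ by the very definition of $\cap V$ given immediately before~\eref{H1VXdfn_e}, and on $H_{\fc+1}(X;\Z)$ it is the Poincar\'e-dual of the restriction homomorphism: it equals the composition
$$ H_{\fc+1}(X;\Z) \stackrel{\PD_X}{\lra} H^{n-\fc-1}(X;\Z)
\stackrel{\io_V^{X*}}{\lra} H^{n-\fc-1}(V;\Z) \stackrel{\PD_V}{\lra} H_1(V;\Z). $$
If the middle arrow is zero, the whole composition is zero, hence so is its precomposition with $\io^X_{X-U*}$, and $H^V_{X-U}\!=\!\{0\}$ as required.

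No serious obstacle is anticipated: Lemma~\ref{rimtori_lmm} supplies all the exactness needed, and the Poincar\'e-duality identification of $\cap V$ with the cohomology restriction is standard. The only mild care required is to verify that the descended map is well-defined on the quotient $H_1(V;\Z)_{X-U}$, which is exactly the content of exactness at $H_1(V)$ in the displayed segment.
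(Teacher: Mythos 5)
Your proof is correct and follows exactly the route the paper intends: the paper states that the corollary is an immediate consequence of the long exact sequence of Lemma~\ref{rimtori_lmm} (exactness at $H_1(V)$ giving well-definedness and injectivity, exactness at $H_{\fc}(X\!-\!U\!\cup\!V)$ identifying the image with $\cR_{X-U}^V$), and your second paragraph supplies the standard Poincar\'e-duality identification of $\cap V$ with the restriction homomorphism~\eref{rimtoricrl_e}, which is what the paper's "in particular" clause relies on. The only discrepancy is cosmetic: you index the relevant segment as the $m\!=\!\fc\!+\!1$ case while the paper calls it the $m\!=\!\fc$ case, but these are adjacent portions of the same long exact sequence and the content is identical.
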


\begin{crl}\label{rimtori_crl2}
Suppose $X$ is a compact oriented manifold and $U,V\!\subset\!X$ are compact oriented disjoint submanifolds
of codimension~$\fc$.
Then, the homology homomorphisms induced by inclusions give rise to the commutative square 
of short exact sequences of Figure~\ref{compRT_fig}.
\end{crl}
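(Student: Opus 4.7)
The plan is to derive the square from the naturality of Lemma~\ref{rimtori_lmm}, applied to several pairs of submanifolds, using Corollary~\ref{rimtori_crl} to identify each vertex of the square with an explicit $H_1$-quotient. Concretely, I would first invoke Corollary~\ref{rimtori_crl} with the data $(V,\eset)$, $(U,\eset)$, $(V,U)$, $(U,V)$, and $(U\!\cup\!V,\eset)$ to identify
\[\cR_X^V\approx H_1(V;\Z)_X,\quad \cR_X^U\approx H_1(U;\Z)_X,\quad
\cR_{X-U}^V\approx H_1(V;\Z)_{X-U},\quad \cR_{X-V}^U\approx H_1(U;\Z)_{X-V},\]
and, using that $H_1(U\!\cup\!V;\Z)\!=\!H_1(U;\Z)\!\oplus\!H_1(V;\Z)$ since $U\!\cap\!V\!=\!\eset$,
\[\cR_X^{U\cup V}\approx \big(H_1(U;\Z)\!\oplus\!H_1(V;\Z)\big)\big/H_X^{U\cup V}.\]

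Next I would produce the square's two short exact sequences by analyzing the inclusion-induced maps
$H_{\fc}(X\!-\!U\!\cup\!V;\Z)\lra H_{\fc}(X\!-\!V;\Z)$ and $H_{\fc}(X\!-\!U\!\cup\!V;\Z)\lra H_{\fc}(X\!-\!U;\Z)$.
Restricted to $\cR_X^{U\cup V}$, these land in $\cR_X^V$ and $\cR_X^U$ respectively, with kernels exactly $\cR_{X-V}^U$ and $\cR_{X-U}^V$ by definition. Surjectivity in each case follows from Lemma~\ref{rimtori_lmm}: any $\al\!\in\!\cR_X^V\!\subset\!H_{\fc}(X\!-\!V;\Z)$ has $\al\!\cap\!U\!=\!0$ because $\al$ dies in $H_{\fc}(X;\Z)$, so the $m\!=\!\fc$ case of~\eref{rimtori_e} (with $U$ in the role of~$V_{\tn{Lem}}$ and $V$ in the role of $U_{\tn{Lem}}$) produces a lift $\wt\al\!\in\!H_{\fc}(X\!-\!U\!\cup\!V;\Z)$, and any such lift automatically lies in $\cR_X^{U\cup V}$ since it factors through $\al$. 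The commutativity of the square is then built in, since both restrictions are obtained from the single commutative diagram of inclusions of complements of $U$, $V$, and~$U\!\cup\!V$ inside~$X$.

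The main bookkeeping obstacle is verifying that, after transport across the isomorphisms of Corollary~\ref{rimtori_crl}, the maps of the square correspond to the evident quotient projections between the $H_1$-models, e.g.~$(H_1(U;\Z)\!\oplus\!H_1(V;\Z))/H_X^{U\cup V}\twoheadrightarrow H_1(V;\Z)/H_X^V$ and $H_1(V;\Z)_{X-U}\twoheadrightarrow H_1(V;\Z)_X$. This reduces to checking that the homomorphisms $\De_X^V$, $\De_X^U$, and $\De_X^{U\cup V}$ of~\eref{DeSV_e}, together with the various inclusions $\io_{S_X(\cdot)*}^{X-\cdot}$, assemble naturally with respect to the inclusions among $X\!-\!U$, $X\!-\!V$, and $X\!-\!U\!\cup\!V$; this in turn is an immediate consequence of the definitions of Poincar\'e duality and the Gysin sequence used in Lemma~\ref{rimtori_lmm}.
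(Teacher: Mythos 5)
Your identification of the vertices via Corollary~\ref{rimtori_crl} and your argument for the exactness of the right column are both correct and essentially the same as the paper's: the kernel of $\io_{X-U\cup V*}^{X-V}\colon\cR_X^{U\cup V}\to\cR_X^V$ is $\cR_{X-V}^U$ by definition, and surjectivity follows because any $\al\in\cR_X^V$ satisfies $\al\cap U=0$ (it dies in $H_\fc(X;\Z)$), so the $m=\fc$ exactness of~\eref{rimtori_e} in Lemma~\ref{rimtori_lmm}, with the roles of $U$ and $V$ interchanged, produces a lift to $H_\fc(X-U\cup V;\Z)$ that automatically lies in $\cR_X^{U\cup V}$. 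The naturality argument for commutativity is also sound, and this is where the paper itself verifies the bottom-right square directly using~\eref{MSseqX_e}.

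However, your proposal never addresses the exactness of the left column $0\to H_{X-V}^U\to H_X^{U\cup V}\to H_X^V\to 0$. In the paper this requires a separate, nontrivial step: the exactness of~\eref{rimtori_e} at $m=\fc+1$ is used to show that $H_{X-V}^U\oplus 0 = H_X^{U\cup V}\cap(H_1(U;\Z)\oplus 0)$, i.e.\ that every class $A\in H_{\fc+1}(X;\Z)$ with $A\cap V=0$ actually lifts to $H_{\fc+1}(X-V;\Z)$. You could sidestep this with the nine lemma (exact rows, exact middle and right columns, commutativity $\Rightarrow$ left column exact), but you do not invoke it, and nothing in your ``main bookkeeping obstacle'' paragraph covers it. You also appear to misread Figure~\ref{compRT_fig}: the diagram is not $U\leftrightarrow V$ symmetric, and the modules $\cR_{X-U}^V$ and $H_1(V;\Z)_{X-U}$ that you invoke do not appear as vertices; the top row involves $\cR_{X-V}^U$ and the bottom involves $\cR_X^V$. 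So the proposal has a genuine gap at the left column, and a secondary confusion about what the figure actually asserts.
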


\begin{proof}
The rows in this diagram are exact by Corollary~\ref{rimtori_crl}.
The exactness of the middle column is clear, as is the exactness of
the last column at~$\cR_{X-V}^U$.
The last column is exact at~$\cR_X^{U\cup V}$ by~\eref{cRXVdfn_e},
with $(X,V)$ replaced by~$(X\!-\!V,U)$.
By the exactness of~\eref{MSseqX_e},
\begin{equation*}\begin{split}
\cR_X^V&=\big\{\io_{S_XV*}^{X-V}(A_{S_XV})\!:\,A_{S_XV}\!\in\!H_{\fc}(S_XV;\Z),~
q_{X*}^V(S_XV)\!=\!0\big\}, \\
\cR_X^{U\cup V}&\supset\big\{\io_{S_XV*}^{X-U\cup V}(A_{S_XV})\!:\,A_{S_XV}\!\in\!H_{\fc}(S_XV;\Z),~
q_{X*}^V(S_XV)\!=\!0\big\}.
\end{split}\end{equation*}
This implies that the last column is exact at~$\cR_X^V$.\\ 

\begin{figure}
\begin{gather*}
\xymatrix{& 0\ar[d]& 0\ar[d]&& 0\ar[d]& \\
0 \ar[r]& H_{X-V}^U \ar[r] \ar[d]& H_1(U;\Z) 
\ar[rr]^{\io_{S_XU*}^{X-U\cup V}\!\circ\De_X^U}\ar[d]&& 
\cR_{X-V}^U  \ar[r]\ar[d]& 0\\
0 \ar[r]& H_X^{U\cup V} \ar[r]\ar[d]& H_1(U;\Z)\!\oplus\!H_1(V;\Z) \ar[rr]\ar[d]&& 
\cR_X^{U\cup V}  \ar[r]\ar[d]^{\io_{X-U\cup V}^{X-V}}& 0\\
0 \ar[r]& H_X^V \ar[r] \ar[d]& H_1(V;\Z) 
\ar[rr]^{\io_{S_XV*}^{X-V}\!\circ\De_X^V}\ar[d]&& 
\cR_X^V  \ar[r]\ar[d]& 0\\
& 0& 0&& 0&}
\end{gather*}
\caption{Comparison of rim tori.}
\label{compRT_fig}
\end{figure}

\noindent
By~\eref{H1VXdfn_e} and \eref{cap0_e} with $U\!=\!\eset$,
$$H_{X-V}^U\!\oplus\!0 \subset  H_X^{U\cup V}\cap H_1(U;\Z)\!\oplus\!0
=\big\{(A\!\cap\!U,0)\!:\,A\!\in\!H_{\fc+1}(X;\Z)~\hbox{s.t.}~A\!\cap\!V\!=\!0\big\}.$$
In particular,  the left column in the diagram is exact  at~$H_{X-V}^U$.
By the exactness of~\eref{rimtori_e} with $m\!=\!\fc\!+\!1$ and $U\!=\!\eset$ at the second position,
the above inclusion is in fact an equality and the left column is exact at~$H_X^{U\cup V}$.
The exactness of the left column at~$H_X^{U\cup V}$ is immediate from~\eref{H1VXdfn_e}.\\

\noindent
The commutativity of the bottom right square is equivalent to the homomorphism
$$\io_{X-U\cup V*}^{X-V}\!\circ\!\io_{S_XU*}^{X-U\cup V}\!\circ\!\De_X^U\!: H_1(U;\Z)\lra H_{\fc}(X\!-\!V;\Z)$$
being zero.
This is immediate from the exactness of~\eref{MSseqX_e} with $U$ and~$V$ interchanged, since 
$q_{X*}^U\!\circ\!\De_X^U\!=\!0$ by the exactness of~\eref{GysinSeq_e}.
The commutativity of the other three squares is clear.
\end{proof}

\begin{eg}\label{Proj_eg}
Let $Z$ be a compact oriented manifold and $E_1,E_2\!\lra\!Z$ be complex vector bundles
of ranks~$r_1$ and~$r_2$, respectively.
By \cite[Theorem~5.7.9]{Sp}, there is a commutative diagram 
$$\xymatrix{ H_*(\P E_2;\Z) \ar[rr]\ar[d]_{\approx}&& 
H_*(\P(E_1\!\oplus\!E_2);\Z) \ar[d]^{\approx}\\ 
H_*(Z;\Z)\otimes H_*(\P^{r_2-1};\Z) \ar[rr]&& H_*(Z;\Z)\otimes H_*(\P^{r_1+r_2-1};\Z)}$$
of homomorphisms of modules. 
In particular, the homomorphism
$$\io_{\P(E_1\oplus E_2)-\P E_1*}^{\P(E_1 \oplus E_2)}\!:
H_{2r_2}\big(\P(E_1\!\oplus\!E_2)\!-\!\P E_1;\Z\big)
\approx H_{2r_2}\big(\P E_2;\Z\big) \lra H_{2r_2}\big(\P(E_1\!\oplus\!E_2);\Z\big)$$
is injective.
Thus,
$$\cR_{\P(E_1 \oplus E_2)}^{\P E_1}=\{0\}\,.$$
In the case $r_2\!=\!1$, which is the most relevant for our purposes, 
this statement follows from $\P E_2$ being
a section of the fiber bundle $\P(E_1\!\oplus\!E_2)\!\lra\!Z$.
\end{eg}

\begin{eg}\label{EllFib_eg}
Let $\wh\P^2_9$ denote the blowup of $\P^2$ at the 9 intersection points 
of a general pair of smooth cubic curves~$C_1$ and~$C_2$,
i.e.~the base locus of a general pencil of cubics in~$\P^2$.
The proper transforms of these cubics in $\wh\P^2_9$ are pairwise disjoint and 
form a fibration $\pi\!:\wh\P^2_9\!\lra\!\P^1$, obtained by sending each point
in $\wh\P^2_9$ to the cubic in the pencil passing through~it.
A smooth fiber~$F$ of~$\pi$ is a torus~$\T^2$;
there are also 12~singular fibers, each of which is a sphere with a transverse self-intersection.
By Corollary~\ref{rimtori_crl},
$$\cR_{\wh\P^2_9}^F\approx H_1(F;\Z)\approx\Z^2,$$
because $H^1(\wh\P^2_9;\Z)\!=\!0$.
\end{eg}

\begin{eg}\label{P1Vrim_eg}
Let $F$ be a compact oriented manifold, $X\!=\!\P^1\!\times\!F$, 
$F_0\!=\!\{0\}\!\times\!F$, and $F_{\i}\!=\!\{\i\}\!\times\!F$.
Then,
\begin{gather}
\notag
H_X^{F_0}=H_1(F_0;\Z)=H_1(F;\Z), \qquad
H_{X-F_0}^{F_{\i}}\!=\!\{0\}\subset H_1(F_{\i};\Z)= H_1(F;\Z),\\
\label{P1Vrim_e3}
H_X^{F_0\cup F_{\i}}\!=\!H_{\De}\subset H_1(F_0\!\cup\!F_{\i};\Z)
=H_1(F;\Z)\!\oplus\!H_1(F;\Z),
\end{gather}
where $H_{\De}$ is the diagonal subgroup.
By Lemma~\ref{rimtori_lmm} or Corollary~\ref{rimtori_crl}, the homomorphism
$$\io_{S_XF_0*}^{X-F_0\cup F_{\i}}\!\circ\De_X^{F_0}\!: 
H_1(F_0;\Z)\!=\!H_1(F;\Z)\lra \cR_X^{F_0\cup F_{\i}}  \subset H_{\fc}(X\!-\!F_0\!\cup\!F_{\i};\Z)$$
is an isomorphism.
Under this identification, the last labeled homomorphism in~\eref{rimtori_e}
 with $m\!=\!3$ corresponds~to
\BE{P1Vrim_e5}H_1(F_0\!\cup\!F_{\i};\Z)\!=\!H_1(F;\Z)\!\oplus\!H_1(F;\Z)\lra H_1(F;\Z), 
\quad (\ga_0,\ga_{\i})\lra\ga_0\!-\!\ga_{\i}\,.\EE
\end{eg}

\section{Gluing along a common submanifold}
\label{glue_sec}

\noindent
We discuss changes in the homology and cohomology after gluing two manifolds along a common submanifold
in Sections~\ref{gluehom_subs} and~\ref{gluecoh_subs}, respectively.
We use Lemma~\ref{cRXYV_lmm} in Section~\ref{vancyc_subs} to express
the vanishing cycles module~$\cR_{X,Y}^V$ defined in~\eref{cRXYVdfn_e}
in terms of the rim tori modules~$\cR_X^V$ and~$\cR_Y^V$ defined in~\eref{cRXVdfn_e}.
This lemma is used in Section~\ref{gluerimtori_subs} to compare the rim tori modules before and 
after gluing.
Lemma~\ref{sumcoh_lmm} contains the precise statements of~(a) and~(b) 
at the bottom of \cite[p996]{IPsum}; it is useful for determining the cohomology insertions
compatible with the symplectic sum formula for GW-invariants.

\subsection{Changes in homology}
\label{gluehom_subs}

\noindent
Continuing with the notation introduced in Section~\ref{prelimglue_subs},
we relate the analogue of the module~\eref{cRXYVdfn_e} for homology of any dimension
to  such analogues of the modules~\eref{cRXVdfn_e} for the two pieces.

\begin{lmm}\label{cRXYV_lmm}
If $X$ and $Y$ are manifolds, $V\!\subset\!X,Y$ is a closed submanifold, 
$\vph\!:S_XV\!\lra\!S_YV$ is a diffeomorphism commuting with the projections to~$V$, 
and $q_{\vph}\!:X\!\#_{\vph}\!Y\lra X\!\cup_V\!Y$ is a collapsing map, then
\BE{cRXYV_e}\begin{split}
&\ker\big\{q_{\vph*}\!:\,H_m(X\!\#_{\vph}\!Y)\!\lra\!H_m(X\!\cup_{\vph}\!Y)\big\}\\
&\hspace{1in}=\big\{\io^{X\#_{\vph}Y}_{X-V*}(A_{X-V})\!:~
A_{X-V}\!\in\!H_m(X\!-\!V),~\io_{X-V*}^X(A_{X-V})\!=\!0\big\}\\
&\hspace{1in}=\big\{\io^{X\#_{\vph}Y}_{Y-V*}(A_{Y-V})\!:~
A_{Y-V}\!\in\!H_m(Y\!-\!V),~\io_{Y-V*}^Y(A_{Y-V})\!=\!0\big\}
\end{split}\EE
for all $m\!\in\!\Z$ and for any coefficient ring.
\end{lmm}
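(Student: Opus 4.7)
My plan is to compare the Mayer--Vietoris sequences for the decompositions
$X\#_{\vph}Y=U_X\!\cup\!U_Y$ with $U_X\!\simeq\!X\!-\!V$, $U_Y\!\simeq\!Y\!-\!V$, $U_X\!\cap\!U_Y\!\simeq\!S_XV$,
and $X\!\cup_V\!Y=V_X\!\cup\!V_Y$ with $V_X\!\simeq\!X$, $V_Y\!\simeq\!Y$, $V_X\!\cap\!V_Y\!\simeq\!V$,
related by the collapsing map $q_{\vph}$,
and to supplement these with the long exact sequences of the pairs
$(X\#_{\vph}Y,X\!-\!V)$ and $(X\!\cup_V\!Y,X)$.
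Standard excision across the neck identifies each of the two relative homology groups with $H_m(Y,V)$
and shows that the induced vertical map $q_{\vph*}$ between them is an isomorphism.
By the $X\!\leftrightarrow\!Y$ symmetry inherent in the construction,
it will suffice to establish the first of the two descriptions of $\ker(q_{\vph*})$ in~\eref{cRXYV_e}.

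The forward inclusion is immediate from the fact that $q_{\vph}$ restricted to $X\!-\!V\!\subset\!X\#_{\vph}Y$
coincides with the composition $X\!-\!V\!\hookrightarrow\!X\!\hookrightarrow\!X\!\cup_V\!Y$,
so $\io^X_{X-V*}(A_{X-V})\!=\!0$ forces $q_{\vph*}(\io^{X\#_{\vph}Y}_{X-V*}(A_{X-V}))\!=\!0$.
For the reverse inclusion, take $\al\!\in\!\ker(q_{\vph*})$.
Since the right vertical in the ladder of pair sequences is an isomorphism and $q_{\vph*}(\al)\!=\!0$,
the image of~$\al$ in $H_m(X\#_{\vph}Y,X\!-\!V)\!\cong\!H_m(Y,V)$ vanishes;
exactness then produces $A_{X-V}\!\in\!H_m(X\!-\!V)$ with $\al\!=\!\io^{X\#_{\vph}Y}_{X-V*}(A_{X-V})$.
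Applying $q_{\vph*}$ and invoking Mayer--Vietoris for $X\!\cup_V\!Y$ yields $A_V\!\in\!H_m(V)$ with
$\io^X_{X-V*}(A_{X-V})=\io^X_{V*}(A_V)$ and $\io^Y_{V*}(A_V)=0$.

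The crux is now to absorb this discrepancy by modifying $A_{X-V}$ within $\ker(\io^{X\#_{\vph}Y}_{X-V*})$.
Applying the Mayer--Vietoris sequence for $Y=(Y\!-\!V)\!\cup\!\nu_YV$ to the element
$(0,A_V)\!\in\!H_m(Y\!-\!V)\!\oplus\!H_m(V)$, which maps to zero in $H_m(Y)$ by $\io^Y_{V*}(A_V)\!=\!0$,
furnishes $A_{S_YV}\!\in\!H_m(S_YV)$ with
$\io^{Y-V}_{S_YV*}(A_{S_YV})\!=\!0$ and $q^V_{Y*}(A_{S_YV})=-A_V$.
Set $A_{S_XV}\!=\!\vph^{-1}_*(A_{S_YV})\!\in\!H_m(S_XV)$;
then $\io^{X-V}_{S_XV*}(A_{S_XV})$ lies in $\ker(\io^{X\#_{\vph}Y}_{X-V*})$ by the Mayer--Vietoris sequence
for $X\#_{\vph}Y$, because $\vph_*A_{S_XV}\!=\!A_{S_YV}$ is killed by $\io^{Y-V}_{S_YV*}$.
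Replacing $A_{X-V}$ by $A_{X-V}+\io^{X-V}_{S_XV*}(A_{S_XV})$ therefore leaves $\al$ unchanged,
while $\io^X_{X-V*}$ of the new class becomes $\io^X_{V*}(A_V+q^V_{X*}(A_{S_XV}))=\io^X_{V*}(A_V-A_V)=0$,
using $q^V_X\!=\!q^V_Y\!\circ\!\vph$.
The same argument with $X$ and $Y$ interchanged yields the second equality in~\eref{cRXYV_e}.

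The step I expect to be the main obstacle is the careful bookkeeping of signs in the various
Mayer--Vietoris boundary maps so that the cancellation in the last displayed computation comes out
exactly as written, together with the verification that the excision identifications
$H_m(X\#_{\vph}Y,X\!-\!V)\!\cong\!H_m(Y,V)\!\cong\!H_m(X\!\cup_V\!Y,X)$ are compatible with $q_{\vph}$;
both of these are routine but require some care with tubular-neighborhood choices and the homotopy
equivalences $U_X\!\simeq\!X\!-\!V$ and $U_Y\!\simeq\!Y\!-\!V$.
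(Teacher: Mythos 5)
Your proof is correct, and the key step is handled by a genuinely different mechanism than the paper's. Both arguments hinge on the same Mayer--Vietoris ladders for $X\!\#_{\vph}\!Y=(X\!-\!V)\cup_{SV}(Y\!-\!V)$ and $X\!\cup_V\!Y$, and both finish the same way: once the class is pulled back to $H_m(X\!-\!V)$, one uses the exactness of the MV sequence for $Y$ to produce a class on $SV$ that corrects the representative to lie in $\ker\io^X_{X-V*}$, with $q^V_X=q^V_Y\circ\vph$ doing the cancellation. The difference is in how each proof establishes that a class in $\ker q_{\vph*}$ comes from $H_m(X\!-\!V)$ (or, in the paper's version, from $H_m(X\!-\!V)\oplus H_m(Y\!-\!V)$). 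The paper invokes the \emph{proof} of Mayer--Vietoris to realize $A_\#$ by a pair of bordered pseudocycles on the two pieces with common boundary in $SV$, then uses this geometric representative together with the Gysin sequence to show $\de_{\vph}(A_\#)=0$, and only then applies MV exactness. You instead run the long exact sequences of the pairs $(X\!\#_{\vph}\!Y,X\!-\!V)$ and $(X\!\cup_V\!Y,X)$ side by side and use excision to identify both relative groups with $H_m(Y,V)$ compatibly with $q_{\vph*}$, which immediately yields $\al=\io^{X\#_{\vph}Y}_{X-V*}(A_{X-V})$. This is a cleaner route: it sidesteps the pseudocycle machinery of \cite{Z} (which is naturally adapted to integral coefficients) and works transparently over any coefficient ring, as the lemma asserts; the price is the bookkeeping you flag at the end, namely checking that the excision isomorphisms $H_m(X\!\#_{\vph}\!Y,X\!-\!V)\cong H_m(Y,V)\cong H_m(X\!\cup_V\!Y,X)$ commute with $q_{\vph*}$, which indeed requires a careful choice of tubular neighborhoods but is routine.
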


\begin{proof}
Denote the codimension of $V$ in $X$ and $Y$ by $\fc$,
$S_XV\!\approx\!S_YV$ by $SV$, and the bundle projection map $SV\!\lra\!V$ by~$q_V$.
Mayer-Vietoris for $X\!\#_{\vph}\!Y\!=\!(X\!-\!V)\!\cup_{SV}\!(Y\!-\!V)$ 
and $X\!\cup_V\!Y$ give a commutative pair of long exact sequences
$$\xymatrix{H_m(SV)\ar[r] \ar[d]^{q_{V*}}&
H_m(X\!-\!V)\oplus H_m(Y\!-\!V) 
\ar[d]|{\io^X_{X-V*}\oplus\io^Y_{Y-V*}}
\ar[rr]^>>>>>>>>>{\io^{X\#_{\vph}\!Y}_{X-V*}\!+\io^{X\#_{\vph}\!Y}_{Y-V*}} &&
H_m(X\!\#_{\vph}\!Y) \ar[d]^{q_{\vph*}}\ar[r]^{\de_{\vph}}& H_{m-1}(SV) \ar[d]^{q_{V*}}\\
H_m(V)\ar[r]^>>>>>>>>>{(\io_{V*}^X,-\io_{V*}^Y)} &
H_m(X)\oplus H_m(Y) 
\ar[rr]^>>>>>>>>>>>>{\io^{X\cup_V\!Y}_{X*}\!+\io^{X\cup_V\!Y}_{Y*}}
&& H_m(X\!\cup_V\!Y) \ar[r]^{\de_{\cup}}& H_{m-1}(V) }$$
The commutativity of the middle square above implies that the second and third expressions
in~\eref{cRXYV_e} are contained in the first.\\

\noindent
Suppose $A_{\#}\!\in\!H_m(X\!\#_V\!Y)$ and $q_{\vph*}(A_{\#})\!=\!0$.
By the proof of Mayer-Vietoris for $X\!\#_{\vph}\!Y$,
there exist bordered pseudocycles
$$f_X\!:(Z_X,\prt Z_X)\lra (X\!-\!V,SV) \qquad\hbox{and}\qquad
f_Y\!:(Z_Y,\prt Z_Y)\lra (Y\!-\!V,SV)$$
such that  $\prt Z_X=-\prt Z_Y$ and
$$f_X \underset{\prt Z_X=-\prt Z_Y}{\cup}f_Y\!:
Z_X\underset{\prt Z_X=-\prt Z_Y}{\cup}Z_Y\lra X\!\#_{\vph}\!Y$$
represents the homology class~$A_{\#}$.
Since 
$$q_{V*} \big[f_X|_{\prt Z_X}\big]
=q_{V*}\de_{\vph}(A_{\#})=\de_{\cup}q_{\vph*}(A_{\#})=0,$$
by \eref{GysinSeq_e} we can choose $f_X$ and $f_Y$ so that $f_X(\prt Z_X)\!=\!f_Y(\prt Z_Y)$ 
equals $SV|_{B_V}$ for some class \hbox{$B_V\!\in\!H_{m-\fc}(V)$}.
The smooth maps 
$$\io_{X-V}^X\!\circ\!f_X\!:Z_X\lra X \qquad\hbox{and}\qquad
\io_{Y-V}^Y\!\circ\!f_Y\!:Z_Y\lra Y$$  
then determine homology classes $A_X$ on $X$ and $A_Y$ on~$Y$;
in the exceptional $\fc\!=\!1$ case, the two boundary components of these 
maps come with opposite signs and thus cancel.
By the commutativity of the diagram on the chain level inducing the above diagram
in homology,
$$\io^{X\cup_V\!Y}_{X*}(A_X)+\io^{X\cup_V\!Y}_{Y*}(A_Y)
=q_{\vph*}(A_{\#})=0.$$
Thus, there exists $A_V\!\in\!H_m(V)$ such that 
$$A_X=\io_{V*}^X(A_V), \qquad A_Y=-\io_{V*}^Y(A_V)\,.$$
The Mayer-Vietoris sequence~\eref{MSseqX_e} with $U\!=\!\eset$ then gives 
$$\big[f_X|_{\prt Z_X}\big]=\de_X(A_X)=0 \qquad\Lra\qquad
\de_{\vph}(A_{\#})=\big[f_X|_{\prt Z_X}\big]=  0.$$
The first Mayer-Vietoris sequence now implies that  
\BE{AXYV_e} A_{\#}=\io_{X-V*}^{X\#_{\vph}Y}(A_{X-V})+\io_{Y-V*}^{X\#_{\vph}Y}(A_{Y-V})\EE
for some $A_{X-V}\in H_m(X\!-\!V)$ and $A_{Y-V}\in H_m(Y\!-\!V)$.
Since 
\begin{equation*}\begin{split}
&\io_{X*}^{X\cup_V Y}\big(\io_{X-V*}^X(A_{X-V})\big)+
\io_{Y*}^{X\cup_V Y}\big(\io_{Y-V*}^Y(A_{Y-V})\big)\\
&\hspace{.5in}=q_{\vph*}\big(\io_{X-V*}^{X\#_{\vph}Y}(A_{X-V})\big)+
q_{\vph*}\big(\io_{Y-V*}^{X\#_{\vph}Y}(A_{Y-V})\big)
=q_{\vph*}\big(A_{\#}\big)=0,
\end{split}\end{equation*}
the second Mayer-Vietoris sequence above implies that  
\BE{AXYV_e2} \io_{X-V*}^X(A_{X-V})=\io_{V*}^X(A_V), \qquad 
\io_{Y-V*}^Y(A_{Y-V})=-\io_{V*}^Y(A_V)\EE
for some $A_V\!\in\!H_m(V)$.
By the first equality above and the exactness of~\eref{MSseqX_e}, there exists
\BE{AXYV_e3}
A_{SV}\in H_m(SV)  \qquad\hbox{s.t.}\qquad
\io_{SV*}^{X-V}(A_{SV})=A_{X-V}, \quad q_{V*}(A_{SV})=A_V.\EE
By~\eref{AXYV_e} and the first equality in~\eref{AXYV_e3}, 
\begin{equation*}\begin{split}
A_{\#}&=\io_{X-V*}^{X\#_{\vph}Y}\big(A_{X-V}-\io_{SV*}^{X-V}(A_{SV})\big)
+\io_{Y-V*}^{X\#_{\vph}Y}\big(A_{Y-V}+\io_{SV*}^{Y-V}(A_{SV})\big)\\
&=\io_{Y-V*}^{X\#_{\vph}Y}\big(A_{Y-V}+\io_{SV*}^{Y-V}(A_{SV})\big).
\end{split}\end{equation*}
By the second equalities in~\eref{AXYV_e2} and~\eref{AXYV_e3}, 
$$\io_{Y-V*}^Y\big(A_{Y-V}+\io_{SV*}^{Y-V}(A_{SV})\big)
=-\io_{V*}^Y(A_V)+\io_{V*}^Y\big(q_{V*}(A_{SV})\big)=0.$$
Thus, the first expression in~\eref{cRXYV_e} is contained in the third expression
and by symmetry in the~second.
\end{proof}

\noindent
Taking $m\!=\!\fc$ in the statement of Lemma~\ref{cRXYV_lmm}, we find that 
\BE{VanRim_e}  \cR_{X,Y}^V = \io^{X\#_{\vph}Y}_{X-V*}(\cR_X^V)=\io^{X\#_{\vph}Y}_{Y-V*}(\cR_Y^V);\EE
see \eref{cRXVdfn_e} and~\eref{cRXYVdfn_e} for the notation.

\subsection{The vanishing cycles}
\label{vancyc_subs}

\noindent
We now focus on the $m\!=\!\fc$ case of Lemma~\ref{cRXYV_lmm}; it relates~$\cR_{X,Y}^V$
to~$\cR_X^V$ and~$\cR_Y^V$.
Let
\BE{HfcSVdfn_e}H_{\fc}(SV;\Z)_{X,Y}=
\big\{A_{SV}\!\in\!H_{\fc}(SV;\Z)\!:\,q_{V*}(A_{SV})\!\in\!\ker\io^X_{V*}\cap\ker\io^Y_{V*}\big\}.\EE
By the exactness of~\eref{MSseqX_e}, the homomorphisms
\BE{SVtoRXV_e}\begin{split}
\io^{X-V}_{SV*}\!:\,H_{\fc}(SV;\Z)_{X,Y}&\lra \cR_X^V\subset H_{\fc}(X\!-\!V;\Z)
\qquad\hbox{and}\\
\io^{Y-V}_{SV*}\!:H_{\fc}(SV;\Z)_{X,Y}&\lra \cR_Y^V \subset H_{\fc}(Y\!-\!V;\Z)
\end{split}\EE
are  surjective.

\begin{crl}\label{cRXYV_crl}
Let $X,Y,V$ and $\vph$ be as in Lemma~\ref{cRXYV_lmm} and $\fc$ be the codimension of~$V$ 
in~$X$ and~$Y$.
\begin{enumerate}[label=(\arabic*),leftmargin=*]
\item\label{cRXYVgen_it0} 
The subgroup $\cR_{X,Y}^V\subset H_{\fc}(X\!\#_{\vph}\!Y;\Z)$ is isomorphic to the cokernel 
of the homomorphism 
\BE{H2sum_e5}
H_{\fc}(SV;\Z)_{X,Y}\lra \cR_X^V\!\oplus\!\cR_Y^V,
~~~ A_{SV}\lra \big(\io^{X-V}_{SV*}(A_{SV}),-\io^{Y-V}_{SV*}(A_{SV})\big).\EE

\item\label{cRXYVtriv_it0} 
If either $\cR_X^V\!=\!\{0\}$ or $\cR_Y^V\!=\!\{0\}$, then $\cR_{X,Y}^V\!=\!\{0\}$ and
 the homomorphism
\BE{H2sum_e0}
\#\!: H_{\fc}(X;\Z)\!\times\!_VH_{\fc}(Y;\Z) \lra H_{\fc}(X\!\#_{\vph}\!Y;\Z), \quad 
(A_X,A_Y)\lra A_X\!\#_{\vph}\!A_Y,\EE
induced by gluing representatives of homology classes along $V$, is well-defined.
\end{enumerate}
\end{crl}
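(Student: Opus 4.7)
The plan is to deduce both parts from the characterization of $\cR_{X,Y}^V$ in Lemma~\ref{cRXYV_lmm}, combined with the Mayer-Vietoris sequence for the decomposition $X\#_{\vph}Y=(X\!-\!V)\cup_{SV}(Y\!-\!V)$. Part~\ref{cRXYVtriv_it0} is essentially immediate from~\eref{VanRim_e}: if $\cR_X^V\!=\!\{0\}$ (or $\cR_Y^V\!=\!\{0\}$), then $\cR_{X,Y}^V\!=\!\io^{X\#_{\vph}Y}_{X-V*}(\cR_X^V)\!=\!\{0\}$, and the well-definedness of~\eref{H2sum_e0} follows from the observation at the end of Section~\ref{prelimglue_subs} that the class of $f_X\#_{\vph}f_Y$ is determined by $[f_X]$ and $[f_Y]$ only up to an element of~$\cR_{X,Y}^V$. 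The substance of the corollary is in part~\ref{cRXYVgen_it0}.

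For part~\ref{cRXYVgen_it0}, first I would introduce the natural homomorphism
$$\Phi\!:\,\cR_X^V\!\oplus\!\cR_Y^V\lra \cR_{X,Y}^V,\qquad
(A_X,A_Y)\longmapsto \io^{X\#_{\vph}Y}_{X-V*}(A_X)+\io^{X\#_{\vph}Y}_{Y-V*}(A_Y);$$
by~\eref{VanRim_e}, its image lies in $\cR_{X,Y}^V$ and $\Phi$ is surjective, since every element of~$\cR_{X,Y}^V$ is already $\Phi(A_X,0)$ for some $A_X\!\in\!\cR_X^V$. To identify the cokernel of~\eref{H2sum_e5} with~$\cR_{X,Y}^V$, it then suffices to show that $\ker\Phi$ is exactly the image of~\eref{H2sum_e5}. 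The Mayer-Vietoris sequence for $X\#_{\vph}Y$ gives that the kernel of $\io^{X\#_{\vph}Y}_{X-V*}+\io^{X\#_{\vph}Y}_{Y-V*}$ on the full direct sum $H_{\fc}(X\!-\!V;\Z)\oplus H_{\fc}(Y\!-\!V;\Z)$ is the image of $H_{\fc}(SV;\Z)$ under $A_{SV}\!\longmapsto\!(\io^{X-V}_{SV*}(A_{SV}),-\io^{Y-V}_{SV*}(A_{SV}))$. Intersecting with $\cR_X^V\!\oplus\!\cR_Y^V$, the task reduces to showing that any such $A_{SV}$ with $\io^{X-V}_{SV*}(A_{SV})\!\in\!\cR_X^V$ and $\io^{Y-V}_{SV*}(A_{SV})\!\in\!\cR_Y^V$ can be chosen to lie in $H_{\fc}(SV;\Z)_{X,Y}$.

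This last step, which I expect to be the only nontrivial (though purely formal) bookkeeping, is handled by applying~\eref{MSseqX_e} with $U\!=\!\emptyset$ separately to $X$ and to~$Y$. Vanishing of the composition of consecutive maps in that sequence yields
$$\io^X_{X-V*}\bigl(\io^{X-V}_{SV*}(A_{SV})\bigr)=\io^X_{V*}\bigl(q_{V*}(A_{SV})\bigr),$$
and analogously for~$Y$. The hypotheses on $\io^{X-V}_{SV*}(A_{SV})$ and $\io^{Y-V}_{SV*}(A_{SV})$ make the left-hand sides vanish, forcing $q_{V*}(A_{SV})\in\ker\io^X_{V*}\cap\ker\io^Y_{V*}$, which is precisely the condition in~\eref{HfcSVdfn_e}. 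This completes part~\ref{cRXYVgen_it0}, and part~\ref{cRXYVtriv_it0} then follows as indicated above.
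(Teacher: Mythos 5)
Your proposal is correct and follows essentially the same route as the paper: surjectivity of $\cR_X^V\!\oplus\!\cR_Y^V\!\to\!\cR_{X,Y}^V$ from the $m\!=\!\fc$ case of Lemma~\ref{cRXYV_lmm}, identification of the kernel via Mayer--Vietoris for $X\!\#_{\vph}\!Y$, and the identity $\io^X_{X-V*}\!\circ\!\io^{X-V}_{SV*}\!=\!\io^X_{V*}\!\circ\!q_{V*}$ to translate membership in $\cR_X^V\!\oplus\!\cR_Y^V$ into the condition defining $H_{\fc}(SV;\Z)_{X,Y}$. The only (immaterial) deviations are that you obtain this identity from exactness of~\eref{MSseqX_e} rather than from the commutativity of the Mayer--Vietoris ladder, and you get part~\ref{cRXYVtriv_it0} directly from~\eref{VanRim_e} rather than from part~\ref{cRXYVgen_it0} and the surjectivity of~\eref{SVtoRXV_e}.
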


\begin{proof}
\ref{cRXYVgen_it0} By the $m\!=\!\fc$ case of Lemma~\ref{cRXYV_lmm}, this claim is equivalent to 
\BE{H2sumcrl_e1}\begin{split}
&\big\{(A_{X-V},A_{Y-V})\in \cR_X^V\!\oplus\!\cR_Y^V\!:~
\io^{X\#_{\vph}Y}_{X-V*}(A_{X-V})\!+\!\io^{X\#_{\vph}Y}_{Y-V*}(A_{Y-V})=0\big\}\\
&\quad
=\big\{\big(\io_{SV*}^{X-V}(A_{SV}),-\io_{SV*}^{Y-V}(A_{SV})\big)\!:\,
A_{SV}\!\in\!H_{\fc}(SV;\Z),~q_{V*}(A_{SV})\in \ker\io^X_{V*}\cap\ker\io^Y_{V*}\big\}.
\end{split}\EE
Let $A_{X-V}\!\in\!H_{\fc}(X\!-\!V;\Z)$ and $A_{Y-V}\!\in\!H_{\fc}(Y\!-\!V;\Z)$.
By the Mayer-Vietoris sequence for 
$X\!\#_{\vph}\!Y$ in the proof of Lemma~\ref{cRXYV_lmm},
\begin{gather*}
\io^{X\#_{\vph}Y}_{X-V*}(A_{X-V})\!+\!\io^{X\#_{\vph}Y}_{Y-V*}(A_{Y-V})=0
\qquad\Llra\\
(A_{X-V},A_{Y-V})=\big(\io_{SV*}^{X-V}(A_{SV}),-\io_{SV*}^{Y-V}(A_{SV})\big)
\quad\hbox{for some}~A_{SV}\!\in\!H_{\fc}(SV;\Z).
\end{gather*}
For any $A_{SV}\!\in\!H_{\fc}(SV;\Z)$, the commutativity of the first square in 
the diagram of short exact sequences in the proof of Lemma~\ref{cRXYV_lmm} implies that 
$$\big(\io_{SV*}^{X-V}(A_{SV}),-\io_{SV*}^{Y-V}(A_{SV})\big)\in \cR_X^V\!\oplus\!\cR_Y^V
\qquad\Llra\qquad
q_{V*}(A_{SV})\in \ker\io^X_{V*}\cap\ker\io^Y_{V*}.$$
The last two statements give~\eref{H2sumcrl_e1}.\\

\noindent
\ref{cRXYVtriv_it0}  
The second claim of this corollary follows from the first and the surjectivity of 
the homomorphisms~\eref{SVtoRXV_e}.
\end{proof}

\begin{rmk}\label{RTvph_rmk}
The patching map~$\vph$ covering the identity on~$V$
does not effect the homomorphism~\eref{H2sum_e5},
as the former corresponds to a trivialization of an $S^{c-1}$-bundle over~$S^1$ for each fixed
element of $\cR_X^V\!\oplus\!\cR_Y^V$.
Thus, $\cR_{X,Y}^V$ does not depend on the choice of~$\vph$.
However, it may depend on the identification of the copies 
of~$V$ in~$X$ and~$Y$, as illustrated in Example~\ref{P1Vrim_eg2}.
\end{rmk}

\noindent
We next restrict to the setting of Corollary~\ref{rimtori_crl} with $U\!=\!\eset$;
the last restriction is not necessary, but the case $U\!=\!\eset$ suffices for our purposes.
Thus, suppose that $X$, $Y$, and~$V$ are compact and oriented.
Define 
$$\De_{X,Y}^V\!: 
H_1(V;\Z)_X \oplus H_1(V;\Z)_Y \lra\frac{H_1(V;\Z)}{H_X^V\!+\!H_Y^V}\,,\quad
\big([\ga_X]_{H_X^V},[\ga_Y]_{H_Y^V}\big)\lra \big[\ga_X\!-\!\ga_Y\big]_{H_X^V+H_Y^V}.$$
Denote by $\ov{H}_{X,Y}^V$ the image of the composition
$$H_{\fc}(SV;\Z)_{X,Y}
\stackrel{(\io_{SV*}^{X-V},-\io_{SV*}^{Y-V})}{\xra{1.8}}
\cR_X^V\!\oplus\!\cR_Y^V \stackrel{\approx}{\lra}
H_1(V;\Z)_X \oplus H_1(V;\Z)_Y \stackrel{\De_{X,Y}^V}{\xra{.8}}\frac{H_1(V;\Z)}{H_X^V\!+\!H_Y^V},$$
with the second arrow above given by the isomorphisms of Corollary~\ref{rimtori_crl}.
Let $H_{X,Y}^V\!\subset\!H_1(V;\Z)$ be the preimage of $\ov{H}_{X,Y}^V$
under the quotient projection
$$H_1(V;\Z)\lra \frac{H_1(V;\Z)}{H_X^V\!+\!H_Y^V}\,.$$
In particular,
\BE{HVXYprp_e}H_X^V\!+\!H_Y^V\subset H_{X,Y}^V\subset H_1(V;\Z)\EE
and the first inclusion is an equality if either $\io^X_{V*}$ or $\io^Y_{V*}$
is injective on~$H_{\fc}$; see the proof of Corollary~\ref{cRXYV_crl2} below.

\begin{crl}\label{cRXYV_crl2}
Let $X$ and $Y$ be compact oriented manifolds, 
$V\!\subset\!X,Y$ be a  compact oriented submanifold, and 
$\vph\!:S_XV\!\lra\!S_YV$ be an orientation-reversing diffeomorphism commuting with the projections to~$V$.
\begin{enumerate}[label=(\arabic*),leftmargin=*]
\item\label{cRXYV2H1_it} 
The isomorphisms of Corollary~\ref{rimtori_crl} for $(X,V)$ and~$(Y,V)$ induce a commutative diagram
$$\xymatrix{ \frac{H_1(V;\Z)}{H_X^V}\oplus  \frac{H_1(V;\Z)}{H_Y^V}
\ar[d]|{\io_{S_XV*}^{X-V}\!\circ\De_X^V\oplus\io_{S_YV*}^{Y-V}\!\circ\De_Y^V}
\ar[rrrr]^{\ov\De_{X,Y}^V}  &&&&    \frac{H_1(V;\Z)}{H_{X,Y}^V}  \ar[d]_{\approx}^{\fR_{X,Y}^V} \\
 \cR_X^V\oplus\cR_Y^V   \ar[rrrr]^{\io_{X-V*}^{X\#_{\vph}Y}+\io_{Y-V*}^{X\#_{\vph}Y}} &&&& 
  \cR_{X,Y}^V\,. }$$
\item\label{cRXYV2inj_it}
If either of $\io_{V*}^X$ or $\io_{V*}^Y$ is injective on~$H_{\fc}$, then
$\cR_{X,Y}^V$ is isomorphic to the cokernel of the homomorphism 
\BE{H1vsRXYV_e}H_1(V;\Z)\lra H_1(V;\Z)_X\!\oplus\!H_1(V;\Z)_Y, \quad
\ga\lra \big([\ga]_X,[\ga]_Y\big),\EE
with $[\ga]_X$ and $[\ga]_Y$ denoting the corresponding cosets of $\ga$.
\end{enumerate}
\end{crl}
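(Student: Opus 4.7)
My plan is to derive both parts from Corollary~\ref{cRXYV_crl}\ref{cRXYVgen_it0}, which presents $\cR_{X,Y}^V$ as the cokernel of the homomorphism $\psi\!:H_\fc(SV;\Z)_{X,Y}\!\to\!\cR_X^V\!\oplus\!\cR_Y^V$ of~\eref{H2sum_e5}, and from the rim-tori isomorphisms of Corollary~\ref{rimtori_crl}. Composing the latter with~$\psi$ produces a homomorphism $\wt\psi\!:H_\fc(SV;\Z)_{X,Y}\!\to\!H_1(V;\Z)_X\!\oplus\!H_1(V;\Z)_Y$ still having cokernel~$\cR_{X,Y}^V$. The key geometric input is the identity $\De_X^V(\ga)\!=\!-\De_Y^V(\ga)$ in $H_\fc(SV;\Z)$: the circle bundle $SV|_\ga$ inherits opposite fiber orientations from $X$ and $Y$ because $\vph$ is orientation-reversing and covers the identity on~$V$. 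Combined with the explicit sign in~\eref{H2sum_e5}, this shows that each diagonal element $([\ga]_X,[\ga]_Y)$ arises as $\wt\psi(\De_X^V(\ga))$, with $\De_X^V(\ga)\!\in\!\ker q_{V*}\!\subset\!H_\fc(SV;\Z)_{X,Y}$.

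For part~\ref{cRXYV2H1_it}, the heart of the argument is to show that the image of~$\wt\psi$ equals $(\De_{X,Y}^V)^{-1}(\ov H_{X,Y}^V)$. The ``$\subset$'' direction is built into the definition of $\ov H_{X,Y}^V$. For ``$\supset$'', since $\De_{X,Y}^V$ already carries the image of~$\wt\psi$ surjectively onto $\ov H_{X,Y}^V$, I need only verify that $\ker\De_{X,Y}^V$ is contained in the image of~$\wt\psi$. If $([\ga_X]_X,[\ga_Y]_Y)\!\in\!\ker\De_{X,Y}^V$, then $\ga_X\!-\!\ga_Y\!=\!\al\!+\!\be$ with $\al\!\in\!H_X^V$ and $\be\!\in\!H_Y^V$, so $([\ga_X]_X,[\ga_Y]_Y)\!=\!([\ga_Y\!+\!\be]_X,[\ga_Y\!+\!\be]_Y)$ is the diagonal image of $\ga_Y\!+\!\be$ and therefore lies in the image of~$\wt\psi$. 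Passing to cokernels and composing with $\De_{X,Y}^V$ then produces an isomorphism $\cR_{X,Y}^V\!\to\!H_1(V;\Z)/H_{X,Y}^V$, whose inverse is the desired $\fR_{X,Y}^V$; commutativity of the square is immediate from the construction, since $\ov\De_{X,Y}^V$ is precisely $\De_{X,Y}^V$ followed by projection to $H_1(V;\Z)/H_{X,Y}^V$.

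For part~\ref{cRXYV2inj_it}, if $\io_{V*}^X$ is injective on~$H_\fc$ (the $Y$-case is symmetric), then $\ker\io_{V*}^X\!\cap\!\ker\io_{V*}^Y\!=\!0$, so $H_\fc(SV;\Z)_{X,Y}\!=\!\ker q_{V*}\!=\!\De_X^V(H_1(V;\Z))$ by the Gysin sequence~\eref{GysinSeq_e}. The image of~$\wt\psi$ is then precisely the diagonal, which forces $\ov H_{X,Y}^V\!=\!0$ and $H_{X,Y}^V\!=\!H_X^V\!+\!H_Y^V$. Part~\ref{cRXYV2H1_it} then yields $\cR_{X,Y}^V\!\cong\!H_1(V;\Z)/(H_X^V\!+\!H_Y^V)$, which a direct quotient computation identifies with the cokernel of~\eref{H1vsRXYV_e}: any $([\al]_X,[\be]_Y)$ is diagonal-equivalent to $(0,[\be\!-\!\al]_Y)$, and $(0,[\de]_Y)$ vanishes in the cokernel exactly when $\de\!\in\!H_X^V\!+\!H_Y^V$. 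The main obstacle I anticipate is the careful orientation computation behind $\De_X^V\!=\!-\De_Y^V$; once that is pinned down, the remainder is a routine diagram chase.
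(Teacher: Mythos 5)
Your proposal is correct and follows essentially the same route as the paper: both arguments identify $\ker\De_{X,Y}^V$ with the diagonal, use the orientation-reversing property of $\vph$ to show this diagonal lies in the image of~\eref{H2sum_e5} (equivalently of $\wt\psi$), and then invoke Corollary~\ref{cRXYV_crl}\ref{cRXYVgen_it0} together with the definition of~$H_{X,Y}^V$; part~\ref{cRXYV2inj_it} is handled in both via the Gysin sequence identification $H_\fc(SV;\Z)_{X,Y}=\ker q_{V*}=\Im\De_X^V$. You have simply expanded the paper's compressed final steps into a fuller quotient computation.
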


\begin{proof}
\ref{cRXYV2H1_it} The image of $\ker\De_{X,Y}^V$ under the isomorphisms of  
Corollary~\ref{rimtori_crl} is given~by
\BE{cRXYV2H1_e}\begin{split}
&\big\{\big(\io_{S_XV*}^{X-V}(\De_X^V(\ga)),\io_{S_YV*}^{Y-V}(\De_Y^V(\ga))\big)\!:\,
\ga\!\in\!H_1(V;\Z)\big\}\\
&\hspace{1.5in}
=\big\{\big(\io_{S_XV*}^{X-V}(\De_X^V(\ga)),-\io_{S_YV*}^{Y-V}(\De_X^V(\ga))\big)\!:\,
\ga\!\in\!H_1(V;\Z)\big\},
\end{split}\EE
since $\vph$ is orientation-reversing.
Thus, this image is contained in the image of the homomorphism~\eref{H2sum_e5}.
The first claim now follows from Corollary~\ref{cRXYV_crl}\ref{cRXYVgen_it0} and 
the definition of~$H_{X,Y}^V$ above.\\

\noindent
\ref{cRXYV2inj_it} If either $\io_{V*}^X$ or $\io_{V*}^Y$ is injective on~$H_{\fc}$,
\BE{Hfcinj_e} H_{\fc}(SV;\Z)_{X,Y}=\ker q_{V*}=\big\{\De_X^V(\ga)\!:\,\ga\!\in\!H_1(V;\Z)\big\};\EE
the last equality holds by the exactness of~\eref{GysinSeq_e}.
Thus, the image of the homomorphism~\eref{H2sum_e5} is given by~\eref{cRXYV2H1_e}
and $H_{X,Y}^V\!=\!H_X^V\!+\!H_Y^V$.
The second claim of this corollary now follows from the first.
\end{proof}

\begin{crl}\label{cRXYV_crl3}
If $X$ is a compact oriented manifold, 
$V\!\subset\!X$ is a compact oriented submanifold, and 
$\vph\!:S_XV\!\lra\!S_XV$ is an orientation-reversing diffeomorphism commuting with the projection to~$V$,
then  $\cR_{X,X}^V\!\approx\!H_1(V;\Z)_X$.
\end{crl}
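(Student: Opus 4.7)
The plan is to apply the $m=\fc$ case of Corollary~\ref{cRXYV_crl}\ref{cRXYVgen_it0} with $Y=X$ and $V$ identified with itself on both sides. This realizes $\cR_{X,X}^V$ as the cokernel of
$$H_\fc(SV;\Z)_{X,X}\lra\cR_X^V\oplus\cR_X^V,\qquad A\lra\big(\io_{SV*}^{X-V}(A),-\io_{SV*}^{Y-V}(A)\big).$$
Let $\psi\equiv\io_{SV*}^{X-V}\circ\De_X^V\!:\,H_1(V;\Z)_X\stackrel{\approx}{\to}\cR_X^V$ denote the isomorphism supplied by Corollary~\ref{rimtori_crl} with $U=\eset$. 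The conclusion will follow once the image of the displayed map is identified with the diagonal $\{(\al,\al)\!:\,\al\!\in\!\cR_X^V\}$, since the difference map $(a,b)\!\mapsto\!a\!-\!b$ then identifies the cokernel with $\cR_X^V\!\approx\!H_1(V;\Z)_X$.

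The crucial ingredient is the sign produced by $\vph$. Because $\vph\!:\,S_XV\!\to\!S_YV$ covers the identity on $V$ and is orientation-reversing on the total space, it is orientation-reversing on each fiber $S^{\fc-1}$ of the bundle $SV\!\to\!V$. Consequently $\vph_*$ acts as $-1$ on the rim-torus classes in $H_\fc(SV;\Z)$ (those in the image of $\De_X^V$) and as $+1$ on the section-type classes coming from $H_\fc(V)$; in particular $\vph_*(\De_X^V(\ga))=-\De_Y^V(\ga)$ for every $\ga\!\in\!H_1(V;\Z)$. Interpreting $\io_{SV*}^{Y-V}$ through the $\vph$-identification of $S_YV$ with $S_XV$, this yields $\io_{SV*}^{Y-V}(\De_X^V(\ga))=-\psi(\ga)$. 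For the rim class $A=\De_X^V(\ga)$---which lies in $H_\fc(SV;\Z)_{X,X}$ because $q_{V*}(A)=0$ by exactness of~\eref{GysinSeq_e}---the map above therefore sends $A$ to $(\psi(\ga),\psi(\ga))$. As $\ga$ ranges over $H_1(V;\Z)$, this exhausts the diagonal.

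For the reverse inclusion, I would decompose an arbitrary $A\!\in\!H_\fc(SV;\Z)_{X,X}$ as $A=A_B+\De_X^V(\ga)$, where $B=q_{V*}(A)\!\in\!\ker\io_{V*}^X$ and $A_B$ is the ``pushoff'' lift of $B$ obtained by bounding $B=\prt C$ in $X$ and translating the chain $C$ off a tubular neighborhood of $V$ into $X\!-\!V$. The pushoff lift satisfies $\io_{SV*}^{X-V}(A_B)=0$ and is $\vph_*$-invariant as a section-type class, so a direct computation gives $\io_{SV*}^{Y-V}(A)=-\io_{SV*}^{X-V}(A)$ and the image of $A$ is $(\al,\al)$ with $\al=\io_{SV*}^{X-V}(A)$. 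The main subtlety---and the step to execute carefully---is precisely this compatibility between the orientation-reversing action of $\vph$ on rim classes and the vanishing of $\io_{SV*}^{X-V}$ on pushoff lifts, which together pin the image to the diagonal rather than to a larger subgroup of $\cR_X^V\oplus\cR_X^V$.
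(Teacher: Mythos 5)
Your overall strategy coincides with the paper's: realize $\cR_{X,X}^V$ as the cokernel of the homomorphism \eref{H2sum_e5} and show that its image is the diagonal (the paper packages this through Corollary~\ref{cRXYV_crl2}\ref{cRXYV2H1_it} by showing $H_{X,X}^V\!=\!H_X^V$, which is equivalent). Your treatment of the classes $\De_X^V(\ga)$, including the sign $\vph_*\De_X^V(\ga)\!=\!-\De_Y^V(\ga)$ coming from $\vph$ being orientation-reversing, is correct and is exactly the computation~\eref{cRXYV2H1_e}.

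The gap is in your last paragraph. A minor point first: the ``pushoff lift'' cannot be obtained by translating a chain $C$ with $\prt C\!=\!B$ off a tubular neighborhood of $V$ --- $C$ is a $(\fc\!+\!1)$-chain whose boundary lies on $V$ and which generically meets $V$ in a $1$-dimensional set, so it cannot be pushed into $X\!-\!V$. The existence of a lift $A_B$ with $q_{V*}(A_B)\!=\!B$ and $\io_{SV*}^{X-V}(A_B)\!=\!0$ does hold, but the correct justification is the exactness of~\eref{MSseqX_e}: $(0,-B)$ lies in $\ker(\io^X_{X-V*}\!+\!\io^X_{V*})$ precisely because $B\!\in\!\ker\io^X_{V*}$. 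The serious problem is the assertion that $A_B$ is ``$\vph_*$-invariant as a section-type class.'' There is no canonical splitting of $H_\fc(S_XV;\Z)$ into rim and section-type summands; all one knows is that $\vph_*A_B$ is another lift of $B$, so $\vph_*A_B-A_B=\De_X^V(\ga_0)$ for some $\ga_0\!\in\!H_1(V;\Z)$ that measures the twisting of~$\vph$, and hence $\io_{SV*}^{Y-V}(A_B)=\io_{S_XV*}^{X-V}(\vph_*A_B)=\io_{S_XV*}^{X-V}(\De_X^V(\ga_0))$, which vanishes only if $\ga_0\!\in\!H_X^V$. Nothing in your argument forces this, and it is precisely the statement you are trying to prove (that the image of $A_B$ lies on the diagonal), so the argument is circular at its crucial step. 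Concretely, for the trivial circle bundle $S_XV\!=\!V\!\times\!S^1$ and $\vph(v,z)\!=\!(v,\bar{z}g(v))$ with $g\!:V\!\lra\!S^1$, the images of $B$ under the two sections $v\!\mapsto\!(v,1)$ and $v\!\mapsto\!(v,g(v))$ differ by the rim class $\De_X^V([g]\!\cap\!B)$, which is nonzero for nonconstant $[g]\!\in\!H^1(V;\Z)$; so $\vph_*$ does \emph{not} act as $+1$ on section-type classes. What must actually be shown is that $\io_{S_XV*}^{X-V}(A+\vph_*A)=0$ for every $A$ with $q_{V*}(A)\!\in\!\ker\io^X_{V*}$ --- this quantity depends only on $q_{V*}(A)$ and on $\vph$, and controlling it is the entire content of the claim that the image of $H_\fc(SV;\Z)_{X,X}$ is the diagonal rather than a larger subgroup. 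This is the step you correctly flagged as delicate, but your proposed resolution does not supply it.
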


\begin{proof}
Since $\io_{V*}^X\!\circ\!q_{V*}\!=\!\io_{X-V*}^X\!\circ\!\io_{SV*}^{X-V}$,
$$ H_{\fc}(SV;\Z)_{X,X}=
\big\{A_{SV}\!\in\!H_{\fc}(SV;\Z)\!:\,\io_{SV*}^{X-V}(A_{SV})\!\in\!\cR_X^V\big\}.$$
Thus, $\ov{H}_{X,Y}^V$ is the image of the diagonal subgroup under the homomorphism
$$H_1(V;Z)_X\oplus H_1(V;Z)_X \lra H_1(V;Z)_X, \qquad 
\big([\ga_1],[\ga_2]\big)=\big[\ga_1\!-\!\ga_2\big].$$
Therefore,  $H_{X,X}^V\!=\!H_X^V$ and the claim follows from 
Corollary~\ref{cRXYV_crl2}\ref{cRXYV2H1_it}.
\end{proof}

\begin{eg}\label{ESrt_eg}
Let $\wh\P^2_9$ be a rational elliptic surface and $F\!\subset\!\wh\P^2_9$ be a smooth fiber
as in Example~\ref{EllFib_eg}.
For the standard identification of~$F$ in two copies of~$\wh\P^2_9$,
the homomorphism~\eref{H1vsRXYV_e} can be written~as
$$\Z^2\lra \Z^2\oplus\Z^2, \qquad (a,b)\lra\big((a,b),(a,b)\big).$$
Thus, $\cR_{\wh\P^2_9,\wh\P^2_9}^F\!\approx\!\Z^2$;
this also follows from Corollary~\ref{cRXYV_crl3}.
\end{eg}

\begin{eg}\label{P1Vrim_eg2}
Let $F$, $X$, and $F_0,F_{\i}\!\subset\!X$ be as in Example~\ref{P1Vrim_eg} and $Y\!=\!X$.
For the standard identification of the copies of $F_0\!\cup\!F_{\i}$ in~$X$ and in~$Y$,
the homomorphism~\eref{H1vsRXYV_e}  can be written~as
$$H_1(F;\Z)\!\oplus\!H_1(F;\Z)\lra H_1(F;\Z)\!\oplus\!H_1(F;\Z), \quad
(\ga_0,\ga_{\i})\lra (\ga_0\!-\!\ga_{\i},\ga_0\!-\!\ga_{\i}).$$
Thus, $\cR_{X,X}^{F_0\cup F_{\i}}\!\approx\!H_1(F;\Z)$ in this case,
with the isomorphism induced by the homomorphism
$$H_1(F;\Z)\!\oplus\!H_1(F;\Z) \lra H_1(F;\Z), \qquad
(\al,\be)\lra \al\!-\!\be\,.$$
For an arbitrary identification of the two copies of~$F_0\!\cup\!F_{\i}$, the above homomorphism becomes
$$H_1(F;\Z)\!\oplus\!H_1(F;\Z)\lra H_1(F;\Z)\!\oplus\!H_1(F;\Z), \quad
(\ga_0,\ga_{\i})\lra (\ga_0\!-\!\ga_{\i},\ga_0\!-\!\phi_*\ga_{\i}),$$
for some diffeomorphism $\phi\!:F\!\lra\!F$.
For example, if $F\!=\!\T^2$ is the two-torus, $\cR_{X,X}^V\!\approx\!\Z^2$ for
the standard identification, but  $\cR_{X,X}^{F_0\cup F_{\i}}$ 
can be $\Z$ or~$\{0\}$ for other identifications.
\end{eg}

\begin{eg}\label{SympBlrim_eg}
Suppose $X$ is an oriented manifold and $Z\!\subset\!X$ is a compact submanifold so
that the normal bundle $\cN_XZ$ admits a complex structure.
Fix a complex structure in~$\cN_XZ$ and an identification of 
the unit disk bundle~$D(\cN_XZ)$ of~$\cN_XZ$ with a neighborhood of~$Z$ in~$X$.
Let 
$$\P_XZ=\P(\cN_XZ\!\times\!\C)=\P(\cN_XZ\oplus\!Z\!\times\!\C), \qquad 
V=\P(\cN_XZ)\subset \P(\cN_XZ\!\times\!\C),$$
and $\Bl_ZX$ be the manifold obtained from~$X$ by replacing $D(\cN_XZ)\!\subset\!X$
with the disk bundle of the complex tautological line bundle $\ga\!\lra\!V$
(which has the same boundary consisting of the unit vectors in~$\cN_XV$).
Thus,  
$$\cN_{\Bl_ZX}V=\ga, \qquad \cN_{\P_XZ}V=\ga^*, 
\quad\hbox{and}\quad X=\Bl_ZX\#_{\vph}\P_XZ\,,$$
for an orientation-reversing diffeomorphism $\vph\!:S_{\Bl_ZX}V\!\lra\!S_{\P_XZ}V$
induced by the canonical isomorphism $\ga\!\otimes\!\ga^*\!=\!V\!\times\!\C$
(e.g.~$\{\vph(v)\}v\!=\!1$ for all $v\!\in\!S_{\Bl_ZX}V$).
By Corollary~\ref{cRXYV_crl}\ref{cRXYVtriv_it0} and Example~\ref{Proj_eg},
$$\cR_{\Bl_ZX,\P_XZ}^V=\{0\}\,,$$
i.e.~there are no rim tori in this case.
A geometric reasoning for this conclusion is given in the proof of \cite[Lemma~2.11]{LR}.
If $(X,\om)$ is a symplectic manifold and $Z\!\subset\!X$ is a symplectic submanifold,
the construction of \cite[Section~7.1]{MS1} endows $\Bl_ZX$ with 
a symplectic form~$\om_{Z,\ep}$;
$(\Bl_ZX,\om_{Z,\ep})$ is then called a \textsf{symplectic blowup of~$(X,\om)$ along~$Z$}.
\end{eg}

\subsection{Changes in rim tori}
\label{gluerimtori_subs}

\noindent
We continue with the setup of Lemma~\ref{cRXYV_lmm}.
If $U\!\subset\!X$ and $W\!\subset\!Y$ are closed submanifolds of codimension~$\fc$ 
disjoint from~$V$, then $U,W\!\subset\!X\!\#_{\vph}\!Y$ are also disjoint submanifolds
of codimension~$\fc$.
Below we relate the rim tori for $(X\!\#_{\vph}\!Y,U\!\cup\!W)$
to rim tori in~$X$ and~$Y$ and to the vanishing cycles in~$X\!\#_{\vph}\!Y$.
This relation is described by the squares of exact sequences in 
Figures~\ref{gluerimtori_fig} and~\ref{gluerimtori_fig2}.
Such a relation is needed to make sense of the rim tori refinement to
the symplectic sum formula for relative GW-invariants suggested by \cite[(12.7)]{IPsum}
and of the convolution product on rim tori covers appearing above \cite[(11.5)]{IPsum};
see \cite[Section~5.2]{GWsumIP} for details.\\

\noindent
With notation as in~\eref{HfcSVdfn_e}, let
$$\obu{H}_{X,Y}^{SV}=\big\{\big(\io_{SV*}^{X-V}(A_{SV}),-\io_{SV*}^{Y-V}(A_{SV})\big)\!:
A_{SV}\!\in\!H_{\fc}(SV;\Z)_{X,Y}\big\}
\subset \cR_X^V\!\oplus\!\cR_Y^V.$$
With $U$ and $W$ as in the previous paragraph fixed, define
\begin{alignat}{1}
\notag
\wt{H}_{X,Y}^{SV}&=\big\{\big(\io_{SV*}^{X-U\cup V}(A_{SV}),-\io_{SV*}^{Y-V\cup W}(A_{SV})\big)\!:
A_{SV}\!\in\!H_{\fc}(SV;\Z)_{X,Y}\big\},\\
\label{oriHdfn_e}
\ori{H}_{X,Y}^{SV}&=\wt{H}_{X,Y}^{SV} \cap \cR_{X-V}^U\!\oplus\!\cR_{Y-V}^W
\subset \cR_X^{U\cup V}\!\oplus\!\cR_Y^{V\cup W},\\
\notag
\wt\cR_{X\#_{\vph}Y}^{U\cup W}&=
\big\{\io^{X\#_{\vph}Y-U\cup W}_{X-U\cup V*}(A_X)\!+\!
\io^{X\#_{\vph}Y-U\cup W}_{Y-V\cup W*}(A_Y)\!:\,
(A_X,A_Y)\!\in\!\cR_X^{U\cup V}\!\oplus\!\cR_Y^{V\cup W}\big\}.
\end{alignat}
By Proposition~\ref{gluerimtori_prp} below and~\eref{cRXYVdfn_e}, 
$$\wt\cR_{X\#_{\vph}Y}^{U\cup W}=\ker\big\{q_{\vph*}\!\circ\!\io_{X\#_{\vph}Y-U\cup W*}^{X\#_{\vph}Y}
\!:\,H_{\fc}(X\!\#_{\vph}\!Y\!-\!U\!\cup\!W;\Z)\!\lra\!H_{\fc}(X\!\cup_V\!Y;\Z)\big\}.$$

\begin{prp}\label{gluerimtori_prp}
Suppose $X$ and $Y$ are manifolds, $U,V\!\subset\!X$ and $V,W\!\subset\!Y$ are closed
disjoint submanifolds of the same codimension~$\fc$, and
$\vph\!:S_XV\!\lra\!S_YV$ is a diffeomorphism commuting with the projections to~$V$.
Then, the homology homomorphisms induced by inclusions give rise to the commutative square 
of short exact sequences of Figure~\ref{gluerimtori_fig}.
\end{prp}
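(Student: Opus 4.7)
The plan is to build Figure~\ref{gluerimtori_fig} by applying the Mayer--Vietoris argument underlying Corollary~\ref{cRXYV_crl}\ref{cRXYVgen_it0} to two (or three, if the figure is $3\!\times\!3$ like Figure~\ref{compRT_fig}) pairs of manifolds, glued along the common submanifold~$V$ via~$\vph$, and then to deduce commutativity of the vertical maps from naturality. The bottom row is obtained for free: Corollary~\ref{cRXYV_crl}\ref{cRXYVgen_it0} is precisely the short exact sequence
\[ 0\lra \obu{H}_{X,Y}^{SV}\lra \cR_X^V\!\oplus\!\cR_Y^V\lra \cR_{X,Y}^V\lra 0, \]
since the image of the homomorphism~\eref{H2sum_e5} is $\obu{H}_{X,Y}^{SV}$ by definition.

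For the remaining row(s), I apply the same machinery to $X\!-\!U$ and $Y\!-\!W$: since $U\cap V\!=\!\eset\!=\!V\cap W$, one has $(X\!-\!U)\#_\vph(Y\!-\!W)\!=\!X\#_\vph Y\!-\!(U\cup W)$, and the relevant rim tori subgroups are $\cR_{X-V}^U$ and $\cR_{Y-V}^W$ (respectively, $\cR_X^{U\cup V}$ and $\cR_Y^{V\cup W}$). The Mayer--Vietoris sequence for the decomposition $(X\!-\!U\cup V)\cup_{SV}(Y\!-\!V\cup W)$ produces the short exact sequence forming the top row, with right-hand term $\wt\cR_{X\#_\vph Y}^{U\cup W}$ (as defined just before the proposition) and with kernel identified as $\ori H_{X,Y}^{SV}$ or $\wt H_{X,Y}^{SV}$ from~\eref{oriHdfn_e}, depending on which rim tori subgroup of the middle term is chosen. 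The vertical maps are those induced by the inclusions $X\!-\!U\hookrightarrow X$, $Y\!-\!W\hookrightarrow Y$, and $X\#_\vph Y\!-\!(U\cup W)\hookrightarrow X\#_\vph Y$, and commutativity of the square is immediate from the naturality of Mayer--Vietoris with respect to these inclusions.

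The main technical point, which I expect to be the chief obstacle, is identifying the left-hand kernel in each row with the corresponding group from~\eref{oriHdfn_e}. For a class $A_{SV}\!\in\!H_\fc(SV)$ whose images in $H_\fc(X\!-\!U\cup V)$ and $H_\fc(Y\!-\!V\cup W)$ are the two components of a kernel element, one must translate the rim tori condition on these images into the constraint $A_{SV}\!\in\!H_\fc(SV;\Z)_{X,Y}$ from~\eref{HfcSVdfn_e}, and, in the ``$\ori H$''-row, into the extra vanishing $\io_{SV*}^{X-V}(A_{SV})\!=\!0\!=\!\io_{SV*}^{Y-V}(A_{SV})$. Both reductions are diagram chases in the Mayer--Vietoris sequence~\eref{MSseqX_e} for $X=(X\!-\!V)\cup_{SV} V$ and its analogue for~$Y$, closely patterned on the argument already carried out in the proofs of Lemma~\ref{cRXYV_lmm} and Corollary~\ref{cRXYV_crl}; the rest is bookkeeping.
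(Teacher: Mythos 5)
Your overall plan is in the right spirit — apply Corollary~\ref{cRXYV_crl}\ref{cRXYVgen_it0} to $(X,Y,V)$ for the bottom row, apply Mayer--Vietoris for $(X\!-\!U)\#_{\vph}(Y\!-\!W)=X\#_{\vph}Y-(U\!\cup\!W)$ for the upper rows, and get commutativity from naturality — and this does match the paper's skeleton. But there are two problems, and the second is a real gap.

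First, a bookkeeping error: the row whose right-hand term is $\wt\cR_{X\#_{\vph}Y}^{U\cup W}$ is the \emph{middle} row (with middle term $\cR_X^{U\cup V}\!\oplus\!\cR_Y^{V\cup W}$), not the top one; the top row has right-hand term $\cR_{X\#_{\vph}Y}^{U\cup W}$ and middle term $\cR_{X-V}^U\!\oplus\!\cR_{Y-V}^W$. Writing ``the top row, with right-hand term $\wt\cR$ ... and kernel $\ori H$ or $\wt H$ depending on which middle term is chosen'' conflates two distinct short exact sequences; the pairings $(\ori H, \cR_{X-V}^U\!\oplus\!\cR_{Y-V}^W, \cR)$ and $(\wt H, \cR_X^{U\cup V}\!\oplus\!\cR_Y^{V\cup W}, \wt\cR)$ are fixed, not a free choice. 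Also note that the middle term of the middle row is $\cR_X^{U\cup V}\!\oplus\!\cR_Y^{V\cup W}$, not the modules $\cR_{X-U}^V\!\oplus\!\cR_{Y-W}^V$ that a naive application of Corollary~\ref{cRXYV_crl} to $(X\!-\!U,Y\!-\!W,V)$ would produce; these are subgroups of the same $H_{\fc}(X\!-\!U\!\cup\!V;\Z)\oplus H_{\fc}(Y\!-\!V\!\cup\!W;\Z)$ but are not equal in general.

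Second, and more seriously, you have misidentified where the real work lies. You predict the chief obstacle is identifying the left-hand kernels with $\ori{H}$, $\wt{H}$, $\obu{H}$, but those identifications are essentially the \emph{definitions} in~\eref{oriHdfn_e}: $\wt H$ is the kernel of the middle-row map by the Mayer--Vietoris argument already spelled out in Corollary~\ref{cRXYV_crl}, $\ori H$ is defined as $\wt H\cap(\cR_{X-V}^U\!\oplus\!\cR_{Y-V}^W)$, and $\wt\cR$ is defined as the image of the middle-row map. The genuine difficulty is the exactness of the \emph{top} row at $\cR_{X\#_{\vph}Y}^{U\cup W}$, i.e.\ surjectivity of $\cR_{X-V}^U\!\oplus\!\cR_{Y-V}^W\to\cR_{X\#_{\vph}Y}^{U\cup W}$. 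Mayer--Vietoris for $(X\!-\!U)\#_{\vph}(Y\!-\!W)$ only decomposes a class $A\in\cR_{X\#_{\vph}Y}^{U\cup W}$ as $A=\io(A_X)+\io(A_Y)$ with $A_X\in H_{\fc}(X\!-\!U\!\cup\!V)$ and $A_Y\in H_{\fc}(Y\!-\!V\!\cup\!W)$; one must then \emph{improve} this to $A_X'\in\cR_{X-V}^U$, $A_Y'\in\cR_{Y-V}^W$. The paper does this with the commutative ladder of Mayer--Vietoris sequences for $X\#_{\vph}Y$ and $X\#_{\vph}Y-(U\!\cup\!W)$, pushing $A_X,A_Y$ into $H_{\fc}(X\!-\!V),H_{\fc}(Y\!-\!V)$, invoking the exactness of the bottom row to find a correcting class $A_{SV}\in H_{\fc}(SV)$, and then replacing $(A_X,A_Y)$ by $(A_X-\io_{SV*}(A_{SV}),\,A_Y+\io_{SV*}(A_{SV}))$. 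Your proposal does not contain this step, and without it the top row is not established.
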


\begin{figure}
\begin{gather*}
\xymatrix{& 0\ar[d]& 0\ar[d]&&&& 0\ar[d]& \\
0 \ar[r]& {\ori{H}}_{X,Y}^{SV} \ar[r] \ar[d]& \cR_{X-V}^U\!\oplus\!\cR_{Y-V}^W 
\ar[rrrr]^>>>>>>>>>>>>>>>>>>>{\io^{X\#_{\vph}Y-U\cup W}_{X-U\cup V*}+\io^{X\#_{\vph}Y-U\cup W}_{Y-V\cup W*}}  
\ar[d]&&&& 
\cR_{X\#_{\vph}Y}^{U\cup W}  \ar[r]\ar[d]& 0\\
0 \ar[r]& \wt{H}_{X,Y}^{SV} \ar[r]\ar[d]& \cR_X^{U\cup V}\!\oplus\!\cR_Y^{V\cup W} 
\ar[rrrr]^>>>>>>>>>>>>>>>>>>>{\io^{X\#_{\vph}Y-U\cup W}_{X-U\cup V*}+\io^{X\#_{\vph}Y-U\cup W}_{Y-V\cup W*}}
\ar[d]|{\io_{X-U\cup V*}^{X-V}\oplus\io_{Y-V\cup W*}^{Y-V}}  &&&& 
\wt\cR_{X\#_{\vph}Y}^{U\cup W}  \ar[r]\ar[d]^{\io_{X\#_{\vph}Y-U\cup W*}^{X\#_{\vph}Y}}& 0\\
0 \ar[r]& {\obu{H}_{X,Y}^{SV}} \ar[r] \ar[d]& \cR_X^V\!\oplus\!\cR_Y^V 
\ar[rrrr]^{\io^{X\#_{\vph}Y}_{X-V*}+\io^{X\#_{\vph}Y}_{Y-V*}}  \ar[d]&&&&
\cR_{X,Y}^V  \ar[r]\ar[d]& 0\\
& 0& 0&&&& 0&}
\end{gather*}
\caption{Rim tori under gluing: general case.}
\label{gluerimtori_fig}
\end{figure}

\begin{proof}
The commutativity of all four squares is immediate. 
The left column is exact by~\eref{oriHdfn_e} and~\eref{cRXVdfn_e},
while the middle column is exact because the right column in Figure~\ref{compRT_fig} is
(which does not require the additional assumptions of Corollary~\ref{rimtori_crl2}).
The bottom row is exact by  Corollary~\ref{cRXYV_crl}\ref{cRXYVgen_it0}.
The middle row is exact at $\wt{H}_{X,Y}^{SV}$ and $\wt\cR_{X\#_{\vph}Y}^{U\cup W}$
by the definitions of the two modules.
It is exact at $\cR_X^{U\cup V}\!\oplus\!\cR_Y^{V\cup W}$ 
by the exactness of the Mayer-Vietoris sequence for 
$X\!\#_{\vph}\!Y$ in the proof of Lemma~\ref{cRXYV_lmm} with $(X,Y)$ replaced by $(X\!-\!U,Y\!-\!W)$; 
see the proof of Corollary~\ref{cRXYV_crl}.\\

\noindent 
The top row is exact at~$\ori{H}_{X,Y}^{SV}$ by the definition of this module.
It is exact at $\cR_{X-V}^U\!\oplus\!\cR_{Y-V}^W$ by the same reasoning as
in  the proof of Corollary~\ref{cRXYV_crl}.
In order to establish the exactness of the top row at the last position,
we use the commutative diagram
$$\xymatrix{H_{\fc}(SV)\ar[r] \ar@{=}[d]&
H_{\fc}(X\!-\!U\!\cup\!V)\oplus H_{\fc}(Y\!-\!V\!\cup\!W)
\ar[d]|{\io^{X-V}_{X-U\cup V*}\oplus\io^{Y-V}_{Y-V\cup W*}} \ar[rr] &&
H_{\fc}(X\!\#_{\vph}\!Y\!-\!U\!\cup\!W) \ar[d]^{\io^{X\#_{\vph}\!Y}_{X\#_{\vph}\!Y-U\cup W*}}
\ar[r]^>>>>>>{\de_{\vph}}& H_{\fc-1}(SV) \ar@{=}[d]\\
H_{\fc}(SV)\ar[r] &
H_{\fc}(X\!-\!V)\oplus H_m(Y\!-\!V) 
\ar[rr]^>>>>>>>>>>>>>>>{\io^{X\#_{\vph}\!Y}_{X-V*}\!+\io^{X\#_{\vph}\!Y}_{Y-V*}} &&
H_{\fc}(X\!\#_{\vph}\!Y) \ar[r]^{\de_{\vph}}& H_{\fc-1}(SV) }$$
of the Mayer-Vietoris sequences for $X\!\#_{\vph}\!Y$ and $(X\!-\!U)\!\#_{\vph}\!(Y\!-\!W)$.\\

\noindent
If $A\!\in\!\cR_{X\#_{\vph}Y}^{U\cup W}$, then 
$$\de_{\vph}(A)=\de_{\vph}\big(\io_{X\#_{\vph}Y-U\cup W*}^{X\#_{\vph}Y}(A)\big)=\de_{\vph}(0)=0.$$
By the exactness of the first sequence above, this implies that
\BE{gluerim_e5}A=\io_{X-U\cup V*}^{X\#_{\vph}Y-U\cup W}(A_X)
+\io_{Y-V\cup W*}^{X\#_{\vph}Y-U\cup W}(A_Y)\EE
for some
$$A_X\!\in\!H_{\fc}(X\!-\!U\!\cup\!V;\Z), \quad A_Y\!\in\!H_{\fc}(Y\!-\!V\!\cup\!W;\Z).$$
By the commutativity of the middle square in the Mayer-Vietoris diagram,
\begin{equation*}\begin{split}
&\io^{X\#_{\vph}\!Y}_{X-V*}\big(\io^{X-V}_{X-U\cup V*}(A_X)\big)
+\io^{V\#_{\vph}\!Y}_{Y-V*}\big(\io^{Y-V}_{Y-V\cup W*}(A_Y)\big)\\
&\hspace{1in}=\io^{X\#_{\vph}\!Y}_{X\#_{\vph}\!Y-U\cup W*}
\big(\io_{X-U\cup V*}^{X\#_{\vph}Y-U\cup W}(A_X)\big)
+\io^{X\#_{\vph}\!Y}_{X\#_{\vph}\!Y-U\cup W*}
\big(\io_{Y-V\cup W*}^{X\#_{\vph}Y-U\cup W}(A_Y)\big)\\
&\hspace{1in}
=\io^{X\#_{\vph}\!Y}_{X\#_{\vph}\!Y-U\cup W*}(A)=0.
\end{split}\end{equation*}
Thus,  the exactness of the bottom row implies that 
\BE{gluerim_e7}
\io^{X-V}_{X-U\cup V*}(A_X)=\io^{X-V}_{SV*}(A_{SV}), \quad
\io^{Y-V}_{Y-V\cup W*}(A_Y)=-\io^{Y-V}_{SV*}(A_{SV})\EE
for some $A_{SV}\!\in\!H_{\fc}(SV;\Z)$.
By the commutativity of the left square and~\eref{gluerim_e7}, 
\BE{gluerim_e9}
A_X=A_X'\!+\!\io^{X-U\cup V}_{SV*}(A_{SV}), \qquad 
A_Y=A_Y'\!-\!\io^{Y-V\cup W}_{SV*}(A_{SV})\EE
for some $A_X'\!\in\!\cR_{X-V}^U$ and $A_Y'\!\in\!\cR_{Y-V}^W$.
By~\eref{gluerim_e5} and~\eref{gluerim_e9},
$$A=\io_{X-U\cup V*}^{X\#_{\vph}Y-U\cup W}(A_X')
+\io_{Y-V\cup W*}^{X\#_{\vph}Y-U\cup W}(A_Y'),$$
and so the top row is exact at the last position.\\

\noindent
Finally, the first homomorphism in the right column is the inclusion of a submodule.
This column is exact at the middle and last positions by the commutativity of
the diagram and the exactness of the remaining sequences.
\end{proof}

\begin{figure}
\begin{gather*}
\xymatrix{& 0\ar[d]& 0\ar[d]& 0\ar[d]& \\
0 \ar[r]& {\ori{H}}_{X,Y}^V \ar[r] \ar[d]& H_1(U;\Z)\!\oplus\!H_1(W;\Z) \ar[r]\ar[d]& 
\cR_{X\#_{\vph}Y}^{U\cup W}  \ar[r]\ar[d]& 0\\
0 \ar[r]& {\wt{H}}_{X,Y}^V \ar[r]\ar[d]& H_1(U;\Z)\!\oplus\!H_1(V;\Z)\!\oplus\!H_1(V;\Z)\!\oplus\!H_1(W;\Z)
 \ar[r]\ar[d]& 
\wt\cR_{X\#_{\vph}Y}^{U\cup W}  \ar[r]\ar[d]^{\io_{X\#_{\vph}Y-U\cup W*}^{X\#_{\vph}Y}}& 0\\
0 \ar[r]& {\obu{H}_{X,Y}{\,V}} \ar[r] \ar[d]& H_1(V;\Z)\!\oplus\!H_1(V;\Z) 
\ar[r]\ar[d]& 
\cR_{X,Y}^V  \ar[r]\ar[d]& 0\\
& 0& 0& 0&}
\end{gather*}
\caption{Rim tori under gluing: compact oriented case.}
\label{gluerimtori_fig2}
\end{figure}

\noindent
We now restrict to the settings of Corollaries~\ref{rimtori_crl} and~\ref{cRXYV_crl2}.
Thus, suppose that $X$, $Y$, $U$, $V$, and~$W$ are compact and oriented.
Let 
\begin{gather*}
\ori{H}_{X,Y}^V\subset H_1(U;\Z)\!\oplus\!H_1(W;\Z), \qquad 
\obu{H}_{X,Y}^{\,V} \subset H_1(V;\Z)\!\oplus\!H_1(V;\Z),\\
\hbox{and}\qquad
\wt{H}_{X,Y}^V \subset H_1(U;\Z)\!\oplus\!H_1(V;\Z)\!\oplus\!H_1(V;\Z)\!\oplus\!H_1(W;\Z)
\end{gather*}
be the preimages of 
$$\ori{H}_{X,Y}^{SV}\subset \cR_{X-V}^U\!\oplus\!\cR_{Y-V}^W\,,\qquad
H_{X,Y}^{SV}\subset\cR_X^V\!\oplus\!\cR_Y^V, \quad\hbox{and}\quad
\wt{H}_{X,Y}^V \subset \cR_X^{U\cup V}\!\oplus\!\cR_Y^{V\cup W},$$
respectively, under the homomorphisms as in Corollary~\ref{rimtori_crl}.
The commutative square of short exact sequences of Figure~\ref{gluerimtori_fig}
then induces the commutative square of short exact sequences of Figure~\ref{gluerimtori_fig2}.

\begin{eg}\label{S2T2sum_eg2}
Let $X\!=\!\wh\P^2_9$ be the rational elliptic surface of Examples~\ref{EllFib_eg}
and~\ref{ESrt_eg} with smooth fiber $F\!\subset\!\wh\P^2_9$, 
$Y\!=\!\P^1\!\times\!\T^2$, and $F_0,F_{\i}\!\subset\!\P^1\!\times\!\T^2$
be as in Examples~\ref{P1Vrim_eg} and~\ref{P1Vrim_eg2}.
We take $U\!=\!\eset$, $W\!=\!F_{\i}$, $V\!=\!F\!\subset\!X$, and $V\!=\!F_0\!\subset\!Y$.
In this case, $\fc\!=\!2$, the homomorphisms
$$\io_{V*}^X\!: H_2(V;\Z)\lra H_2(X;\Z) \qquad\hbox{and}\qquad
\io_{V*}^Y\!: H_2(V;\Z)\lra H_2(Y;\Z)$$
are injective, and the homomorphisms
\begin{gather*}
\De_X^V\!: H_1(V;\Z)\lra H_2(SV;\Z)_{X,Y}, \qquad
\io_{SV*}^{Y-V\cup W}\!\!: H_2(SV;\Z)_{X,Y}\lra H_2(Y\!-\!V\!\cup\!W;\Z),\\
\hbox{and}\qquad
\io_{SV*}^{X-U\cup V}\!=\!\io_{SV*}^{X-V}\!\!: 
H_2(SV;\Z)_{X,Y}\lra H_2(X\!-\!U\!\cup\!V;\Z)\!=\!H_2(X\!-\!V;\Z), 
\end{gather*}
are isomorphisms.
The exact square in Figure~\ref{gluerimtori_fig2} then specializes 
to the exact square of Figure~\ref{glueRESP1T2_fig}.
The two homomorphisms in the middle column are given~by
$$(0,\ga)\lra (0,0,0,\ga) \qquad\hbox{and}\qquad (0,\ga_1,\ga_2,\ga_3)\lra(\ga_1,\ga_2).$$
The two homomorphisms in the middle row are given~by
$$(\ga_1,\ga_2)\lra (0,\ga_1,\ga_1\!+\!\ga_2,\ga_2) \qquad\hbox{and}\qquad 
(0,\ga_1,\ga_2,\ga_3)\lra\ga_1\!-\!\ga_2\!+\!\ga_3;$$
the restrictions of the first homomorphism to the last component 
and of the second homomorphism to the last two components
correspond to~\eref{P1Vrim_e3} and~\eref{P1Vrim_e5}, respectively.
For the standard identification~$\vph$, $\wh\P^2_9\!=\!\wh\P^2_9\!\#_{\vph}\!(\P^1\!\times\!\T^2)$.
\end{eg}

\begin{figure}
\begin{gather*}
\xymatrix{& 0\ar[d]& 0\ar[d]& 0\ar[d]& \\ 
0 \ar[r]& 0 \ar[r] \ar[d]& 0\!\oplus\!\Z^2 \ar[r]\ar[d]& 
\Z^2  \ar[r]\ar[d]& 0\\
0 \ar[r]& \Z^2\!\oplus\!\Z^2 \ar[r]\ar[d]& 0\!\oplus\!\Z^2\!\oplus\!\Z^2\!\oplus\!\Z^2
 \ar[r]\ar[d]& 
\Z^2  \ar[r]\ar[d]& 0\\
0 \ar[r]& \Z^2\!\oplus\!\Z^2 \ar[r]^{\id} \ar[d]& \Z^2\!\oplus\!\Z^2 \ar[r]\ar[d]& 
0 \ar[r]\ar[d]& 0\\
& 0& 0& 0&}
\end{gather*}
\caption{Rim tori under gluing for 
$(\wh\P^2_9,F)\!=\!(\wh\P^2_9,\eset)\!\#_{\vph}\!(\P^1\!\times\!\T^2,F_{\i})$.}
\label{glueRESP1T2_fig}
\end{figure}

\subsection{Changes in cohomology}
\label{gluecoh_subs}

\noindent
By Lemma~\ref{cRXYV_lmm} and the exactness of~\eref{MSseqX_e},
\BE{cRXYV_e7}\begin{split}
&\ker\big\{q_{\vph*}\!:\,H_m(X\!\#_{\vph}\!Y;\Z)\!\lra\!H_m(X\!\cup_V\!Y;\Z)\big\}\\
&\qquad\hspace{1in}
=\big\{\io^{X\#_{\vph}Y}_{SV*}\!(A_{SV})\!:\,
A_{SV}\!\in\!\ker\{q_V\!:H_m(SV;\Z)\!\lra\!H_m(V;\Z)\}\big\}.
\end{split}\EE
Lemma~\ref{sumcoh_lmm} below, which describes cohomology classes used
as primary inputs for GW-invariants in the symplectic sum formula,
can be seen as the dual of~\eref{cRXYV_e7}.
The analogue of this lemma with field coefficients, which would be sufficient
for the purposes of the symplectic sum formula, follows immediately by 
dualizing~\eref{cRXYV_e} with coefficients in the same field.
Similarly, the proof of Lemma~\ref{sumcoh_lmm} can be viewed as the dual version
of the proof of Lemma~\ref{cRXYV_lmm}, but we include it for the sake of completeness;
like the proof of Lemma~\ref{cRXYV_lmm}, it contains a delicate step.

\begin{lmm}\label{sumcoh_lmm}
If $X$ and $Y$ are manifolds, 
$V\!\subset\!X,Y$ is a closed submanifold, 
$\vph\!:S_XV\!\lra\!S_YV$ is a diffeomorphism commuting with the projections to~$V$, 
and $q_{\vph}\!:X\!\#_{\vph}\!Y\lra X\!\cup_V\!Y$ is a collapsing map, then 
\BE{sumcoh_e}\big\{q_{\vph}^*\al_{\cup}\!:\,\al_{\cup}\!\in\!H^*(X\!\cup_V\!Y;\Z)\big\}
=\big\{\al_{\#}\!\in\!H^*(X\!\#_{\vph}Y;\Z)\!:\,
\al_{\#}|_{SV}\in q_V^*(H^*(V;\Z))\big\},\EE
where $SV\!\subset\!X\!\#_{\vph}Y$ is the sphere bundle $S_XV\!\approx\!S_YV$.
\end{lmm}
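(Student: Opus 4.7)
The forward inclusion $\subseteq$ in~\eref{sumcoh_e} is immediate from naturality: the restriction of $q_\vph$ to~$SV$ factors as $SV\!\xrightarrow{q_V}\!V\!\hookrightarrow\!X\!\cup_V\!Y$, so $(q_\vph^*\alpha_\cup)|_{SV}=q_V^*(\alpha_\cup|_V)\in q_V^*H^*(V;\Z)$ for every $\alpha_\cup\!\in\!H^*(X\!\cup_V\!Y;\Z)$.

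For the reverse inclusion, my plan is to dualize the proof of Lemma~\ref{cRXYV_lmm} using the commutative diagram of cohomology Mayer--Vietoris sequences for the decompositions $X\!\#_\vph\!Y=(X\!-\!V)\cup_{SV}(Y\!-\!V)$ and $X\!\cup_V\!Y$, linked by the maps $q_\vph^*$, $q_V^*$, and the obvious restrictions. Given $\alpha_\#\!\in\!H^m(X\!\#_\vph\!Y;\Z)$ with $\alpha_\#|_{SV}=q_V^*\beta$, let $\alpha_X'=\alpha_\#|_{X-V}$ and $\alpha_Y'=\alpha_\#|_{Y-V}$. Both restrict to $q_V^*\beta$ on~$SV$, so the Mayer--Vietoris sequence for $X=(X\!-\!V)\cup \bar{D}(\cN_XV)$ yields a class $\tilde\alpha_X\!\in\!H^m(X;\Z)$ with $\tilde\alpha_X|_{X-V}=\alpha_X'$ and $\tilde\alpha_X|_V=\beta$, and symmetrically a class $\tilde\alpha_Y\!\in\!H^m(Y;\Z)$. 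Since $\tilde\alpha_X|_V=\beta=\tilde\alpha_Y|_V$, the Mayer--Vietoris sequence for $X\!\cup_V\!Y$ produces an $\alpha_\cup\!\in\!H^m(X\!\cup_V\!Y;\Z)$ restricting to $\tilde\alpha_X$ on~$X$ and $\tilde\alpha_Y$ on~$Y$.

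By commutativity of the diagram, $\eta:=q_\vph^*\alpha_\cup-\alpha_\#$ restricts to zero on both $X\!-\!V$ and $Y\!-\!V$, whence $\eta=\delta_\#\gamma$ for some $\gamma\!\in\!H^{m-1}(SV;\Z)$, where $\delta_\#$ is the Mayer--Vietoris connecting map for $X\!\#_\vph\!Y$. The delicate step, analogous to the invocation of the Gysin sequence in Lemma~\ref{cRXYV_lmm}, is to exploit the freedom in the three Mayer--Vietoris lifts above --- parameterized by the images of the connecting homomorphisms $H^{m-1}(SV)\!\to\!H^m(X)$ and $H^{m-1}(SV)\!\to\!H^m(Y)$ and $H^{m-1}(V)\!\to\!H^m(X\!\cup_V\!Y)$ --- to arrange that $\gamma$ lies in $q_V^*H^{m-1}(V;\Z)$ modulo $\ker\delta_\#$. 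Once this is achieved, the naturality identity $q_\vph^*\delta_\cup=\delta_\#q_V^*$ (commutativity of the leftmost square) supplies a $\xi\!\in\!H^{m-1}(V;\Z)$ with $\eta=q_\vph^*\delta_\cup\xi$, so replacing $\alpha_\cup$ by $\alpha_\cup-\delta_\cup\xi$ gives $q_\vph^*(\alpha_\cup-\delta_\cup\xi)=\alpha_\#$ as required.

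At the cochain level, the delicate step is effected by choosing cocycle representatives $a\!\in\!Z^m(X\!\#_\vph\!Y;\Z)$ of~$\alpha_\#$ and $b\!\in\!Z^m(V;\Z)$ of~$\beta$ with $a|_{SV}=q_V^*b$ as an equality of cochains --- achievable by extending an intermediate $(m\!-\!1)$-cochain on~$SV$ (witnessing the cohomologous relation on~$SV$) to one on all of $X\!\#_\vph\!Y$ and subtracting its coboundary from~$a$. Then $a$ glues with $b$ across the Mayer--Vietoris decomposition of~$X$, and analogously of~$Y$, to a cocycle on $X\!\cup_V\!Y$ that pulls back to~$a$ on the nose. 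I expect verifying this cochain-level gluing (equivalently, that the combined lift ambiguity really absorbs $\gamma$) to be the main technical obstacle, as it is the precise cohomological dual of the Gysin-based adjustment of pseudocycle boundaries that constitutes the delicate step in the proof of Lemma~\ref{cRXYV_lmm}.
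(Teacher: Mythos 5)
Your forward inclusion and Mayer--Vietoris setup coincide with the paper's: restricting $\alpha_\#$ to $X\!-\!V$ and $Y\!-\!V$, extending to classes $\alpha_X\!\in\!H^m(X;\Z)$ and $\alpha_Y\!\in\!H^m(Y;\Z)$ restricting to $\beta$ on~$V$, and then to $\alpha_\cup\!\in\!H^m(X\!\cup_V\!Y;\Z)$, and observing that $\alpha_\#-q_\vph^*\alpha_\cup$ lies in the image of the connecting homomorphism $\delta_\vph^*$. This much is correct.

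Where you diverge is in the finishing move, and the key observation is missing. The paper's ``delicate step'' is the self-contained containment
\begin{equation*}
\big\{\de_\vph^*(\be_{SV})\!:\,\be_{SV}\!\in\!H^{m-1}(SV;\Z)\big\}\subset
\big\{q_\vph^*(\al_\cup)\!:\,\al_\cup\!\in\!H^m(X\!\cup_V\!Y;\Z)\big\},
\end{equation*}
valid for \emph{every} $\be_{SV}$, not just those coming from $q_V^*$. It is proved by writing out the connecting homomorphism at the cochain level: $\de_\vph^*[\eta]$ is represented by a cochain $\eta_\#$ that vanishes on chains in $q_\vph^{-1}(\wt{Y})$ and that on chains in $q_\vph^{-1}(X\!-\!V)$ depends only on $\eta$ and $q_\vph$; since $q_\vph$ is a homeomorphism on the complement of $q_\vph^{-1}(V)$, the formula defining $\eta_\#$ literally descends to a cochain $\eta_\cup$ on $X\!\cup_V\!Y$ with $q_\vph^*\eta_\cup=\eta_\#$. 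No adjustment of lifts and no tracking of the specific error class $\ga$ is required; one simply notices that the whole image of $\de_\vph^*$ already lands in the image of $q_\vph^*$.

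By contrast, your proposal asks to choose the three Mayer--Vietoris lifts so that the error class $\ga$ lies in $q_V^*H^{m-1}(V;\Z)$ modulo $\ker\de_\#$. You correctly flag this as the unproven point, and I do not believe it goes through easily: modifying $\al_\cup$ by $\de_\cup(\xi)$ shifts $\ga$ by $q_V^*\xi$ (which does not help reduce a $\ga$ not already in $q_V^*$), while modifying $\wt\al_X$ by $\de_X^*\be_{SV}$ changes $\al_\cup$ and $\ga$ in a way that needs a separate argument. The cochain plan in your last paragraph is closer in spirit to the paper but has a gap in the gluing: ensuring $a|_{SV}=q_V^*b$ as an equality of cochains on the sphere bundle itself does not give agreement on the full open collar $V_\cup=(X\!-\!V)\cap\wt{Y}$, which is what the Mayer--Vietoris gluing in the small cochain complex actually requires. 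The paper's reformulation in terms of $\de_\vph^*$ sidesteps both difficulties: the cochain $\eta_\#$ is already a cocycle in $\cS_\#^*$ and needs no gluing, only the observation that it is $q_\vph^*$ of the analogously-defined $\eta_\cup$.
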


\begin{proof} The commutativity of the diagram
$$\xymatrix{SV \ar[r]^{\io_{SV}^{X\#_{\vph}Y}} \ar[d]^{q_V} & X\!\#_{\vph}\!Y \ar[d]^{q_{\vph}}\\
V \ar[r]^{\io_V^{X\cup_VY}} & X\!\cup_V\!Y }$$
implies that the left-hand side of \eref{sumcoh_e} is contained in the right-hand side. 
Below we confirm the opposite inclusion.\\

\noindent
We will use the commutative diagram of
the Mayer-Vietoris cohomology sequences for $X\!\cup_V\!Y$ and
$X\!\#_{\vph}\!Y\!=\!(X\!-\!V)\!\cup_{SV}\!(Y\!-\!V)$,
$$\xymatrix{ H^{m-1}(V)\ar[r]^{\de_{\cup}^*} \ar[d]^{q_V^*}&
H^m(X\!\cup_V\!Y) \ar[d]^{q_{\vph}^*}
\ar[rr]^>>>>>>>>>>>>>{(\io^{X\cup_VY*}_X,\io^{X\cup_VY*}_Y)}&&
H^m(X)\oplus H^m(Y) 
\ar[d]|{\io^{X*}_{X-V}\oplus\io^{Y*}_{Y-V}}
\ar[rr]^>>>>>>>>>>>>>>{\io^{X*}_V-\io^{Y*}_V} &&
H^m(V) \ar[d]^{q_V^*}\\
H^{m-1}(SV)\ar[r]^{\de_{\vph}^*} &
H^m(X\!\#_{\vph}\!Y) 
\ar[rr]^>>>>>>>>>>{(\io^{X\#_{\vph}\!Y*}_{X-V},\io^{X\#_{\vph}\!Y*}_{Y-V})}&& 
{}~H^m(X\!-\!V)\!\oplus\! H^m(Y\!-\!V) 
\ar[rr]^>>>>>>>>>{\io^{X-V*}_{SV}\!-\io^{Y-V*}_{SV}} &&
H^m(SV)}$$
where $H^*$ denotes integral cohomology groups.
Suppose 
$$\al_{\#}\in H^*(X\!\#_{\vph}Y;\Z),\quad \al_V\in H^*(V;\Z), \quad
\al_{\#}|_{SV}=q_V^*\al_V.$$
By Mayer-Vietoris for $M\!=\!(M\!-\!V)\!\cup_{SV}V$, where $M\!=\!X,Y$,
$$H^m(M;\Z) \stackrel{(\io_{M-V}^{M*},\io_V^{M*})}{\xra{1.5}} 
H^m(M\!-\!V;\Z)\oplus H^m(V;\Z) 
\stackrel{\io^{M-V*}_{SV}-q_V^*}{\xra{2}} H^m(SV;\Z),$$
there exist $\al_X\!\in\!H^m(X;\Z)$ and $\al_Y\!\in\!H^m(Y;\Z)$ such that 
\BE{sumcoh_e5}\al_X|_{X-V}=\al_{\#}|_{X-V},\quad \al_Y|_{Y-V}=\al_{\#}|_{Y-V},  \quad
\al_X|_V,\al_Y|_V=\al_V\,.\EE
By the last equality in~\eref{sumcoh_e5} and the Mayer-Vietoris sequence for $X\!\cup_V\!Y$ above, 
there exists 
\BE{sumcoh_e7}\al_{\cup}\in H^m(X\!\cup_V\!Y;\Z) \qquad\hbox{s.t.}\quad
\al_{\cup}|_X=\al_X, \qquad \al_{\cup}|_Y=\al_Y.\EE
By the commutativity of the middle square in the above diagram,
the two equalities in~\eref{sumcoh_e7}, and
the first two equalities in~\eref{sumcoh_e5}, 
$$\big(\al_{\#}\!-\!q_{\vph}^*\al_{\cup}\big)\big|_{X-V}=0 \quad\hbox{and}\quad
\big(\al_{\#}\!-\!q_{\vph}^*\al_{\cup}\big)\big|_{Y-V}=0\,.$$
Along with the exactness of the bottom row, this implies that 
$$\al_{\#}\!-\!q_{\vph}^*\al_{\cup}\in 
\big\{\de_{\vph}^*(\be_{SV})\!:\,\be_{SV}\!\in\!H^{m-1}(SV;\Z)\big\}.$$
The claim then follows from the observation that 
\BE{sumcoh_e9} \big\{\de_{\vph}^*(\be_{SV})\!:\,\be_{SV}\!\in\!H^{m-1}(SV;\Z)\big\}
\subset \big\{q_{\vph}^*(\al_{\cup})\!:\,\al_{\cup}\!\in\!H^m(X\!\cup_V\!Y;\Z)\big\},\EE
which is established below.\\

\noindent
Choose an open subset $\wt{Y}$ of $X\!\cup_V\!Y$
consisting of~$Y$ and a tubular neighborhood of~$V$ in~$X$.
Let $\cS_{\cup}^*$ denote the cochain complex of $\Z$-valued homomorphisms
on the sub-complex of singular chains generated by simplicies in $X\!\cup_V\!Y$
with images in either $X\!-\!V$ or~$\wt{Y}$.
Similarly, let $\cS_{\#}^*$ denote the cochain complex of $\Z$-valued homomorphisms
on the sub-complex of singular chains generated by simplicies in $X\!\#_{\vph}\!Y$
with images in either $q_{\vph}^{-1}(X\!-\!V)$ or $q_{\vph}^{-1}(\wt{Y})$.
By \cite[Section~5.32]{Wa}, the restriction homomorphisms from the usual singular cochain 
complexes,
$$\cS^*(X\!\cup_V\!Y)\lra \cS_{\cup}^* \qquad\hbox{and}\qquad
\cS^*(X\!\#_{\vph}\!Y)\lra \cS_{\#}^*\,,$$
induce isomorphisms in cohomology.
Thus, we can replace the domains of these homomorphisms by their targets
in order to verify~\eref{sumcoh_e9}.
Let $V_{\cup}\!=\!(X\!-\!V)\!\cap\!\wt{Y}$ and $SV_{\#}\!=\!q_{\vph}^{-1}(V_{\cup})$.\\

\noindent
For any $\eta\!\in\!\cS^*(SV_{\#})$, define
\begin{alignat*}{2}
\eta_{q_{\vph}^{-1}(X-V)}&\in\cS^*\big(q_{\vph}^{-1}(X\!-\!V)\big)\,, &\qquad
\eta_{q_{\vph}^{-1}(X-V)}(\si)&= 
\begin{cases} \eta(\si),&\hbox{if}~\Im\,\si\,\subset SV_{\#};\\ 0,&\hbox{otherwise};
\end{cases}\\
\eta_{\#}&\in\cS_{\#}^*\,, &\qquad
\eta_{\#}(\si)&= 
\begin{cases} \eta_{q_{\vph}^{-1}(X-V)}(\partial\si),
&\hbox{if}~\Im\,\si\,\subset q_{\vph}^{-1}(X\!-\!V);\\
0,&\hbox{if}~\Im\,\si\,\subset q_{\vph}^{-1}(\wt{Y});
\end{cases}\\
\eta_{\cup}&\in\cS_{\cup}^*\,, &\qquad
\eta_{\cup}(\si)&=
\begin{cases} \eta_{q_{\vph}^{-1}(X-V)}(\partial(q_{\vph}^{-1}\!\circ\!\si)),
&\hbox{if}~\Im\,\si\,\subset X\!-\!V;\\
0,&\hbox{if}~\Im\,\si\,\subset\wt{Y};
\end{cases}
\end{alignat*}
where $\si$ denotes an appropriate singular simplex.
The homomorphisms $\eta_{\#}$ and $\eta_{\cup}$ are well-defined on the overlaps
if $\de\eta\!=\!0$, i.e.~$\eta$ determines an element $[\eta]$ in $H^*(SV_{\#})$.
In such a case, 
$$\de_{\vph}^*[\eta]=[\eta_{\#}], \qquad q_{\vph}^*[\eta_{\cup}]=[\eta_{\#}],$$
by the construction of the connecting homomorphism in the Snake Lemma and 
the definition of pull-back homomorphisms.
This establishes~\eref{sumcoh_e9}.
\end{proof}

\noindent
In \cite[Section~13]{IPsum}, the cohomology classes on $X\!\#_{\vph}Y$ not contained
in the left-hand side of~\eref{sumcoh_e} are described as {\it cutting through neck}, 
i.e.~$SV\!\subset\!X\!\#_{\vph}Y$.
The next statement makes this terminology precise.

\begin{crl}\label{sumcoh_crl}
Suppose $X$, $Y$, $V$, $\vph$, and $q_{\vph}$ are as in Lemma~\ref{sumcoh_lmm},
$X$ and $Y$ are compact, $\fc$~is the codimension of~$V$ in~$X$ and~$Y$,
and $\al_{\#}\!\in\!H^*(X\!\#_{\vph}\!Y;\Z)$.
Then, $\al_{\#}\!=\!q_{\vph}^*\al_{\cup}$
for some \hbox{$\al_{\cup}\!\in\!H^*(X\!\cup_V\!Y;\Z)$} if and only if
$\PD_{X\#_{\vph}Y}(\al_{\#})$ can be represented by a pseudocycle
\hbox{$f_{\#}\!:Z_{\#}\!\lra\!X\!\#_{\vph}\!Y$} transverse to~$SV$ such that 
$f_{\#}^{-1}(SV)\!=\!f_V^*SV$ for some pseudocycle $f_V\!:Z_V\!\lra\!V$ 
of dimension~$\fc$ less.
\end{crl}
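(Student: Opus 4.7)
\noindent
Both implications will be reduced to Lemma~\ref{sumcoh_lmm} combined with the observation, in the paragraph following the proof of Lemma~\ref{rimtori_lmm}, that the class $\De_X^V([f_V])$ in $H_*(SV;\Z)$ is represented by the pseudocycle $\pi_2\!:f_V^*SV\!\lra\!SV$, where $SV\!=\!S_XV\!\approx\!S_YV$ viewed as a hypersurface in $X\#_{\vph}Y$.

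\noindent
For the reverse direction, suppose a pseudocycle $f_{\#}$ with the stated properties exists.  Since $SV$ is a hypersurface in $X\#_{\vph}Y$ and $f_{\#}$ is transverse to it, the restriction $f_{\#}|_{f_V^*SV}\!:f_V^*SV\!\lra\!SV$ is a pseudocycle representing $\PD_{SV}(\al_{\#}|_{SV})$.  By the observation above, this same pseudocycle also represents $\De_X^V([f_V])=\PD_{SV}(q_V^*\PD_V[f_V])$.  Hence $\al_{\#}|_{SV}=q_V^*\PD_V[f_V]\in q_V^*H^*(V;\Z)$, and Lemma~\ref{sumcoh_lmm} supplies $\al_{\cup}\!\in\!H^*(X\!\cup_V\!Y;\Z)$ with $\al_{\#}=q_{\vph}^*\al_{\cup}$.

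\noindent
For the forward direction, suppose $\al_{\#}=q_{\vph}^*\al_{\cup}$.  Lemma~\ref{sumcoh_lmm} produces $\be_V\!\in\!H^*(V;\Z)$ with $\al_{\#}|_{SV}=q_V^*\be_V$, and \cite[Theorem~1.1]{Z} furnishes a pseudocycle $f_V\!:Z_V\!\lra\!V$ representing $\PD_V(\be_V)$.  Take any pseudocycle $g\!:Z\!\lra\!X\#_{\vph}Y$ representing $\PD_{X\#_{\vph}Y}(\al_{\#})$ and perturb it to be transverse to $SV$.  The restriction $g|_{g^{-1}(SV)}$ then represents $\PD_{SV}(\al_{\#}|_{SV})=[\pi_2\!:f_V^*SV\!\lra\!SV]$, so it is cobordant to $\pi_2$ as pseudocycles in $SV$ via some bordism $B\!\lra\!SV$.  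Using a bicollar $SV\!\times\!(-1,1)\!\hookrightarrow\!X\#_{\vph}Y$ and the product structure $g^{-1}(SV\!\times\!(-1,1))\!\approx\!g^{-1}(SV)\!\times\!(-1,1)$ supplied by transversality, perform a local surgery that excises a neighborhood of $g^{-1}(SV)$ from $Z$ and glues in the double $B\!\cup_{f_V^*SV}\!\bar{B}$, with the modified map on the insertion sending the seam $f_V^*SV$ transversely onto $SV\!\times\!\{0\}$ via $\pi_2$.  The resulting map $f_{\#}\!:Z_{\#}\!\lra\!X\#_{\vph}Y$ then satisfies $f_{\#}^{-1}(SV)=f_V^*SV$ and agrees with $\pi_2$ there by construction.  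Its homology class differs from $[g]$ by the double of $B$ along $\partial B$, which bounds $B\!\times\![0,1]$ inside $SV\!\subset\!X\#_{\vph}Y$ and is therefore null-homologous, so $[f_{\#}]=\PD(\al_{\#})$.

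\noindent
The main obstacle will be confirming that the surgery output is a genuine pseudocycle, i.e.~that its limit set at infinity has dimension at most $\dim Z_{\#}\!-\!2$.  Since the surgery is local and modifies $g$ only inside a compact bicollar neighborhood of $SV$, the limit set of $f_{\#}$ is contained in the union of the limit set of $g$ and the image in $X\#_{\vph}Y$ of the limit set of the bordism~$B$; both satisfy the required codimension bound by hypothesis, and a generic choice of gluing template keeps them in general position so that the bound persists for~$f_{\#}$.
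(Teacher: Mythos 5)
Your proposal is correct and follows essentially the same route as the paper: both directions reduce to Lemma~\ref{sumcoh_lmm}, using the identification of $\De_X^V([f_V])$ with the pseudocycle $\pi_2\!:f_V^*SV\!\lra\!SV$ from the end of Section~\ref{cuthom_subs}, and the forward direction performs the same surgery the paper does — cut the original pseudocycle along $g^{-1}(SV)$ and splice in a pseudocycle equivalence $B$ and its reversal $\bar B$ along $f_V^*SV$, pushing the excess off $SV$. Your description as excision of a bicollar neighborhood and gluing in the double $B\cup_{f_V^*SV}\bar B$ is the same construction as the paper's ``cutting along the hypersurface, gluing in $\wt{f}$ and $-\wt{f}$, identifying along $\wt{f}_V$, and moving the complement off~$SV$''; the extra remarks you give on the homology class differing by something that bounds $B\times[0,1]$ and on the limit-set dimension of the output are correct checks that the paper leaves implicit.
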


\begin{proof}
Let $f_{\#}\!:Z_{\#}\!\lra\!X\!\#_{\vph}\!Y$ be a pseudocycle representative
for the Poincare dual of~$\al_{\#}$ transverse to~$SV$.
The restriction of~$f_{\#}$ to $f_{\#}^{-1}(SV)$ then represents 
the Poincare dual of $\al_{\#}|_{SV}$.\\

\noindent
(1) If $f_{\#}^{-1}(SV)\!=\!f_V^*SV$ for some pseudocycle $f_V\!:Z_V\!\lra\!V$ of dimension~$\fc$ less, 
$\al_{\#}|_{SV}\!=\!q_V^*\al_V$, where $\al_V\!\in\!H^*(V;\Z)$ is the Poincare dual
of the class represented by~$f_V$.
Lemma~\ref{sumcoh_lmm} then implies that $\al_{\#}\!=\!q_{\vph}^*\al_{\cup}$
for some $\al_{\cup}\!\in\!H^*(X\!\cup_V\!Y;\Z)$.\\

\noindent
(2) If $\al_{\#}\!=\!q_{\vph}^*\al_{\cup}$ for some $\al_{\cup}\!\in\!H^*(X\!\cup_V\!Y;\Z)$,
then $\al_{\#}|_{SV}\!=\!q_V^*\al_V$ for some $\al_V\!\in\!H^*(V;\Z)$;
see Lemma~\ref{sumcoh_lmm}.
Let $f_V\!:Z_V\!\lra\!V$ be a pseudocycle representing the Poincare dual of~$\al_V$
and $\wt{f}_V\!:f_V^*SV\!\lra\!SV$ be the induced pseudocycle from the total space
of the bundle $SV\!\lra\!V$ pulled back by~$f_V$; see the end of Section~\ref{cuthom_subs}.
Thus, there exists a pseudocycle equivalence $\wt{f}\!:\wt{Z}\!\lra\!SV$ so~that 
$$\prt\wt{f}=f_{\#}\big|_{f_{\#}^{-1}(S_V)}-\wt{f}_V\,.$$
Cutting $Z_{\#}$ along the hypersurface $f_{\#}^{-1}(SV)$,
gluing in~$\wt{f}$ and~$-\wt{f}$ along the resulting cuts, 
identifying $\wt{f}$ and~$-\wt{f}$ along~$\wt{f}_V$, and
moving $\pm\wt{f}$ on the complement of~$\wt{f}_V$ outside~$SV$,
we obtain a pseudocycle representative $\wh{f}_{\#}\!:\wh{Z}_{\#}\!\lra\!X\!\#_{\vph}\!Y$ 
for the Poincare dual of~$\al_{\#}$ transverse to~$SV$ such that 
\hbox{$\wh{f}_{\#}^{-1}(SV)\!=\!f_V^*SV$}. 
\end{proof}

\begin{rmk}\label{Topol_rmk}
There is a slight misstatement in part~(a) at the bottom of page~996 in \cite{IPsum}
related to the $\fc\!=\!2$ case of Lemma~\ref{sumcoh_lmm},
since the first map in \cite[(10.13)]{IPsum}
is never injective for dimensional reasons. 
The statement in~(a) should instead be that 
$\al\!\in\!H^m(Z_{\la};\Z)$ separates if 
$$\cup c_1\!: H^{m-1}(V;\Z)\lra H^{m+1}(V;\Z)$$ 
is injective.
In~(b), $j^*\!:H^*(Z_{\la};\Z)\!\lra\!H^*(SV;\Z)$ is the restriction map.
\end{rmk}

\section{Abelian covers of topological spaces}
\label{AbCov_sec}

\noindent
The notation for the abelian covers of topological spaces relevant
in our context is introduced in Section~\ref{AbCovNotat_subs}.
Section~\ref{AbCovProper_subs} is concerned with their topological properties,
focusing on whether their (co)homology is finitely generated or~not.\\

\noindent
By a \sf{topological space}~$V$, we will mean a \sf{locally path-connected} and 
a \sf{semilocally simply connected} topological space~$V$, as in \cite[\S25,82]{Mu};
all manifolds and more generally CW-complexes fall in this category.
The first assumption implies that the connected and path-connected components
of~$V$ are the same; see \cite[Theorem~25.5]{Mu}.
The two assumptions together imply 
the connected covers of the connected components~$V_r$ of~$V$
 are classified by their fundamental subgroups~$\pi_1(V_r)$;
see \cite[Theorems~79.4,~82.1]{Mu}.

\subsection{Notation and examples}
\label{AbCovNotat_subs}

\noindent
Let $\Z_{\pm}\!\subset\!\Z$ denote the nonzero integers.
For a tuple $\bs\!=\!(s_1,\ldots,s_{\ell})\in\Z_{\pm}^{\,\ell}$ with $\ell\!\in\!\Z^{\ge0}$,
we denote by $\gcd(\bs)$ the greatest common divisor of~$s_1,\ldots,s_{\ell}$;
if $\ell\!=\!0$, we set $\gcd(\bs)\!=\!0$.\\

\noindent
Let $V$ be a topological space. 
For any submodule $H\!\subset\!H_1(V;\Z)$, let
\BE{cRH} q_H\!: H_1(V;\Z)\lra \cR_H\equiv \frac{H_1(V;\Z)}{H}\EE
be the projection to the corresponding quotient module.
If $V_1,\ldots,V_N$ are the topological components of~$V$,
\hbox{$\ell_1,\ldots,\ell_N\!\in\!\Z^{\ge0}$}, and
$\bs_1\!\in\!\Z_{\pm}^{\,\ell_1},\ldots,\bs_N\!\in\!\Z_{\pm}^{\,\ell_N}$, then the topological space
$$V_{\bs_1\ldots\bs_N}\equiv V_1^{\ell_1}\!\times\!\ldots\!\times\!V_N^{\ell_N}$$
is connected.\\

\noindent
With $V$ and $\bs_1,\ldots,\bs_N$ as above, define
\begin{gather}\label{PhiVbs_e}
\Phi_{V;\bs_1\ldots\bs_N}\!: 
H_1\big(V_{\bs_1\ldots\bs_N};\Z\big)=\bigoplus_{r=1}^N\! H_1(V_r;\Z)^{\oplus\ell_r} 
\lra H_1(V;\Z),\\ 
\notag
\Phi_{V;\bs_1\ldots\bs_N}
\big((\ga_{r;i})_{i\le\ell_r,r\le N}\big)= \sum_{r=1}^N\sum_{i=1}^{\ell_r}s_{r;i}\ga_{r;i}\,.
\end{gather}
For any submodule $H\!\subset\!H_1(V;\Z)$, let
\begin{gather}
\label{Hbsdfn_e}  H_{\bs_1\ldots\bs_N}
=\Phi_{V;\bs_1\ldots\bs_N}^{-1}(H)\subset H_1\big(V_{\bs_1\ldots\bs_N};\Z\big),\\
\label{cRbsdfn_e}
\cR_{H;\bs_1\ldots\bs_N}'=\Im\big\{q_H\!\circ\!\Phi_{V;\bs_1\ldots\bs_N}\big\}
\subset\cR_H, \qquad
\cR_{H;\bs_1\ldots\bs_N}=\frac{\cR_H}{\cR_{H;\bs_1\ldots\bs_N}'}
\!\times\!\cR_{H;\bs_1\ldots\bs_N}'\,.
\end{gather}
If $\gcd(\bs_r)\!=\!1$ for every $r\!=\!1,\ldots,N$, then 
$$\cR_{H;\bs_1\ldots\bs_N}'=\cR_{H;\bs_1\ldots\bs_N}=\cR_H\,.$$
If $V$ is connected, then \hbox{$\cR_{H;\bs}'\!=\!\gcd(\bs)\cR_H$}
for any $\bs\!\in\!\Z_{\pm}^{\ell}$ and $H_{(1)}\!=\!H$.\\

\noindent
For each $r\!=\!1,\ldots,N$, let $\wh{V}_r\!\lra\!V_r$ be the maximal abelian cover of~$V_r$,
i.e.~the covering projection corresponding to the commutator subgroup of~$\pi_1(V)$.
The group of deck transformations of this regular covering is $H_1(V_r;\Z)$.
The maximal abelian cover of~$V_{\bs_1\ldots\bs_N}$ is given~by
\BE{whVbs_e}
\wh{V}_{\bs_1\ldots\bs_N}\equiv\prod_{r=1}^N\wh{V}_r^{\ell_r}\lra V_{\bs_1\ldots\bs_N}\,;\EE
there is a natural action of $H_1(V_{\bs};\Z)$ on this space.
For any submodule $H\!\subset\!H_1(V;\Z)$, let
\BE{HbsCov_e}\begin{split}
\pi_{H;\bs_1\ldots\bs_N}'\!:\wh{V}_{H;\bs_1\ldots\bs_N}'&
\equiv\wh{V}_{\bs_1\ldots\bs_N}\big/H_{\bs_1\ldots\bs_N} \lra V_{\bs_1\ldots\bs_N}\,,\\
\pi_{H;\bs_1\ldots\bs_N}\!:\wh{V}_{H;\bs_1\ldots\bs_N}&
\equiv \frac{\cR_H}{\cR_{H;\bs_1\ldots\bs_N}'}\!\times\!\wh{V}_{H;\bs_1\ldots\bs_N}' 
\lra V_{\bs_1\ldots\bs_N}\,.
\end{split}\EE
We will write elements of the second covering as
\BE{whVelem_e}\big([\ga]_{H;\bs_1\ldots\bs_N},[\wh{x}]_H\big)
\in  \frac{\cR_H}{\cR_{H;\bs_1\ldots\bs_N}'}\!\times\!\wh{V}_{H;\bs_1\ldots\bs_N}' ,\EE
with the first component denoting the image of $\ga\!\in\!H_1(V;\Z)$
under the homomorphism
$$H_1(V;\Z)\lra \cR_H\lra \frac{\cR_H}{\cR_{H;\bs_1\ldots\bs_N}'}$$
and the second component denoting the image of $\wh{x}\!\in\!\wh{V}_{\bs_1\ldots\bs_N}$.\\

\noindent
The groups of deck transformations of these regular coverings are
\BE{HbsDeck_e}
\Deck\big(\pi_{H;\bs_1\ldots\bs_N}'\big)=\cR_{H;\bs_1\ldots\bs_N}' 
\qquad\hbox{and}\qquad
\Deck\big(\pi_{H;\bs_1\ldots\bs_N}\big)=\cR_{H;\bs_1\ldots\bs_N},\EE 
respectively.
The action of $\cR_{H;\bs_1\ldots\bs_N}'$ is induced from 
the default action of $H_1(V_{\bs};\Z)$ on~$\wh{V}_{\bs}$
via the surjective homomorphism
$$q_H\!\circ\!\Phi_{V;\bs_1\ldots\bs_N}\!:H_1(V_{\bs};\Z)\lra \cR_{H;\bs_1\ldots\bs_N}';$$
the kernel of this homomorphism, i.e.~$H_{\bs_1\ldots\bs_N}$, acts trivially 
on~$\wh{V}_{H;\bs_1\ldots\bs_N}'$.\\ 

\noindent
If $V$ is connected, then 
$$\pi_H\!\equiv\!\pi_{H;(1)}\!: \wh{V}_H\!\equiv\!\wh{V}_{H;(1)}\!=\!\wh{V}/H\lra V$$
is the abelian covering corresponding to the subgroup $H\!\subset\!H_1(V;\Z)$,
i.e.~the covering corresponding to the normal subgroup $\Hur^{-1}(H)\!\subset\!\pi_1(V)$,
where
$$\Hur\!:\,\pi_1(V_{\bs_1\ldots\bs_N})\lra H_1(V_{\bs_1\ldots\bs_N};\Z)$$
is Hurewicz homomorphism; see  \cite[Section~7.4]{Sp}.\\

\noindent
A collection $\{\ga_j\}\!\subset\!H_1(V;\Z)$ of representatives for the elements of 
$\cR_H/\cR_{H;\bs_1\ldots\bs_N}'$ induces a homomorphism
$$H_1(V;\Z)\lra \Deck(\pi_{H;\bs_1\ldots\bs_N}\big), \qquad 
\eta\lra\Th_{\eta}\,,$$
as follows.
For every $\eta\!\in\!H_1(V;\Z)$ and a coset representative~$\ga_j$,
let $\ga_j(\eta)$ be the unique coset representative from the chosen collection
such~that 
\BE{gajeta_e} \ga_j+\eta-\ga_j(\eta)-\Phi_{V;\bs_1\ldots\bs_N}(\eta_j)\in H\EE
for some $\eta_j\!\in\!H_1(V_{\bs_1\ldots\bs_N};\Z)$.
Define 
$$\Th_{\eta}\!:\wh{V}_{H;\bs_1\ldots\bs_N}\lra \wh{V}_{H;\bs_1\ldots\bs_N}, \quad
\Th_{\eta}\big([\ga_j]_{H;\bs_1\ldots\bs_N},[\wh{x}]_H\big)
=\big([\ga_j(\eta)]_{H;\bs_1\ldots\bs_N},[\eta_j\!\cdot\!\wh{x}]_H\big).$$
Since \eref{gajeta_e} determines $\eta_j$ up to an element of $H_{\bs_1\ldots\bs_N}$,
the last component of~$\Th_{\eta}$ is well-defined.\\

\noindent
Suppose in addition that $V'\!\subset\!V$ is the union of $V_1,\ldots,V_{N'}$ for some $N'\!\le\!N$
and $H'\!\subset\!H_1(V';\Z)$ is a submodule.
Let
\begin{alignat*}{2}
q\!:V_{\bs_1\ldots\bs_N}&\lra V'_{\bs_1\ldots\bs_{N'}}, &\qquad
(x_{r;i})_{i\le\ell_r,r\le N}&\lra (x_{r;i})_{i\le\ell_r,r\le N'},\\
\wh{q}\!: \wh{V}_{\bs_1\ldots\bs_N}&\lra \wh{V'}_{\bs_1\ldots\bs_{N'}}, &\qquad
(\wh{x}_{r;i})_{i\le\ell_r,r\le N}&\lra (\wh{x}_{r;i})_{i\le\ell_r,r\le N'}
\end{alignat*}
denote the projections to the $V'$- and $\wh{V'}$-components.
If $H'$ contains the image of~$H$ under the projection
\BE{H1proj_e}q_*\!:H_1(V;\Z)=\bigoplus_{r=1}^NH_1(V_r;\Z) \lra H_1(V';\Z)=\bigoplus_{r=1}^{N'}H_1(V_r;\Z),\EE
then $q_*$ induces a commutative diagram of homomorphisms (not exact sequences)
$$\xymatrix{H_{\bs_1\ldots\bs_N} \ar@{^(->}[r] \ar[d]^{q_*}& 
H_1(V_{\bs_1\ldots\bs_N};\Z) \ar[rr]^{\Phi_{V;\bs_1\ldots\bs_N}} \ar[d]^{q_*}&& 
H_1(V;\Z) \ar[r]\ar[d]^q& \cR_H \ar[r]\ar[d]& \frac{\cR_H}{\cR_{H;\bs_1\ldots\bs_N}} \ar[d]\\
H'_{\bs_1\ldots\bs_{N'}} \ar@{^(->}[r]& 
H_1(V'_{\bs_1\ldots\bs_{N'}};\Z) \ar[rr]^{\Phi_{V';\bs_1\ldots\bs_{N'}}} &&
H_1(V';\Z) \ar[r]& \cR_{H'} \ar[r]& \frac{\cR_{H'}}{\cR_{H';\bs_1\ldots\bs_{N'}}'} }$$
The continuous map 
\BE{whVforg_e}
\big([\ga]_{H;\bs_1\ldots\bs_N},[\wh{x}]_H\big) 
\lra \big([q_*(\ga)]_{H';\bs_1\ldots\bs_{N'}},[\wh{q}(\wh{x})]_{H'}\big)\EE
then induces a commutative diagram 
$$\xymatrix{ \wh{V}_{H;\bs_1\ldots\bs_N} \ar[d]\ar[rr]^{\wt{q}}&& 
\wh{V'}_{H';\bs_1\ldots\bs_{N'}}\ar[d] \\
V_{\bs_1\ldots\bs_N} \ar[rr]^q&& V'_{\bs_1\ldots\bs_{N'}}}$$
of fiber bundles.

\begin{eg}\label{Tcov_eg}
If $V\!=\!\T^2$, $\ell\!\in\!\Z^+$, and $H\!=\!\{0\}$,
then
$$\wh{V}_{H;\bs}=\C\!\times\!\T_{\bs}^{2(\ell-1)}, \quad\hbox{where}\quad
\T_{\bs}^{2(\ell-1)}=\big\{(z_i)_{i\le\ell}\!\in\!\C^{\ell}\!:
\sum_{i=1}^{\ell}\!s_iz_i\in\Z\!\oplus\!\fI\Z\big\} \big/\Z^{2\ell}
\subset \T^{2\ell}\!=\!V_{\bs}.$$ 
The second covering in~\eref{HbsCov_e} can be written~as
\BE{EScover_e}\C\!\times\!\T_{\bs}^{2(\ell-1)}\lra \T^{2\ell}, \qquad
\big(z,[z_i]_{i\le\ell}\big) \lra  \bigg[z_i\!-\!\frac{z}{s_i}\bigg]_{i\le\ell}.\EE
A path $t\!\lra\![\ga_{i'}(t)]_{i'\le\ell}$ in $\T^{2\ell}$  lifts to the~path
$$t\lra \bigg(\frac{1}{\ell}\sum_{i'=1}^{\ell}s_{i'}\ga_{i'}(t),
\bigg[\ga_i(t)+\frac{1}{\ell s_i}\sum_{i'=1}^{\ell}s_{i'}\ga_{i'}(t)\bigg]_{i\le\ell}\bigg)$$
in $\C\!\times\!\T_{\bs}^{2(\ell-1)}$.
Under the standard identification of $H_1(\T^2;\Z)$ with $\Z\!\oplus\!\fI\Z$,
the action of $H_1(\T^2;\Z)^{\oplus\ell}$ on this cover is thus given~by
$$ (\ga_{i'})_{i'\le\ell}\cdot\big(z,[z_i]_{i\le\ell}\big) 
=\bigg(z\!+\!\frac1\ell\sum_{i'=1}^{\ell}s_{i'}\ga_{i'},
\bigg[z_i\!+\!\frac{1}{\ell s_i}\sum_{i'=1}^{\ell}s_{i'}\ga_{i'}\bigg]_{i\le\ell}\bigg)\,.$$
The group of deck transformations of this cover is $\Z_{\gcd(\bs)}^{\,2}\!\oplus\!\gcd(\bs)\Z^2$.
The action of the second component is induced by the action of $H_1(\T^2;\Z)^{\oplus\ell}$ via
the surjective homomorphism 
$$ H_1(\T^2;\Z)^{\oplus\ell}\lra \gcd(\bs)H_1(\T^2;\Z), \qquad
(\ga_{i'})_{i'\le\ell}\lra \sum_{i'=1}^{\ell}s_{i'}\ga_{i'}\,.$$
\end{eg}

\subsection{Some properties}
\label{AbCovProper_subs}

\noindent
We now describe some cases when the (co)homology of the abelian covers $\wh{V}_{H;\bs}$ 
is finitely generated.
As indicated in \cite[Sections~1.2,1.3]{GWsumIP}, the refinement to the usual GW-invariants
suggested in~\cite{IPrel} is more likely to lead to qualitative applications
in the symplectic sum context in such cases.
We continue with the notation of Section~\ref{AbCovNotat_subs}.

\begin{lmm}\label{rimtoriFG_lmm}
Let $V$ be a finite connected CW-complex, $H\!\subset\!H_1(V;\Z)$ be a submodule,
and $\bs\!\in\!\Z_{\pm}^{\ell}$ with $\ell\!\in\!\Z^+$.
If $H_*(\wh{V}_H;\Q)$ is finitely generated, then so is $H_*(\wh{V}_{H;\bs};\Q)$.
\end{lmm}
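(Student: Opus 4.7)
The plan is to reduce the problem to a Cartan--Leray spectral sequence argument on a suitable intermediate abelian cover of~$V^{\ell}$. First, I would reduce to showing that $H_*(\wh V_{H;\bs}';\Q)$ is finite-dimensional. Since $V$ is connected with $\ell\!\ge\!1$ and $\bs\!\in\!\Z_{\pm}^{\ell}$, the remark following~\eref{cRbsdfn_e} gives $\cR_{H;\bs}'\!=\!\gcd(\bs)\cR_H$. As $\cR_H$ is finitely generated abelian and $\gcd(\bs)\!\ge\!1$, the quotient $\cR_H/\cR_{H;\bs}'$ is finite. From the product structure in~\eref{HbsCov_e}, $\wh V_{H;\bs}$ is then a finite disjoint union of copies of~$\wh V_{H;\bs}'$, so the $\Q$-homology splits accordingly.

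Next, I would present $\wh V_{H;\bs}'$ as a quotient of the product cover $\wh V_H^{\,\ell}\!\to\!V^{\ell}$ by a natural free action. The inclusion $H^{\oplus\ell}\!\subset\!H_{\bs}$ from~\eref{Hbsdfn_e} yields the identification
$$\wh V_{H;\bs}'=\wh V^{\ell}\big/H_{\bs}=\wh V_H^{\,\ell}\big/K,
\qquad K\equiv H_{\bs}\big/H^{\oplus\ell}\subset\cR_H^{\oplus\ell},$$
where $K$ is the kernel of the surjection $\cR_H^{\oplus\ell}\!\lra\!\gcd(\bs)\cR_H$ sending $([\ga_i])$ to $\sum s_i[\ga_i]$. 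Restricting to~$K$ the standard free, properly discontinuous $\cR_H^{\oplus\ell}$-action on $\wh V_H^{\,\ell}$ realizes $\wh V_H^{\,\ell}\!\lra\!\wh V_{H;\bs}'$ as a regular covering with deck group~$K$.

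Finally, I would invoke the Cartan--Leray spectral sequence for this covering,
$$E^2_{p,q}=H_p\big(K;H_q(\wh V_H^{\,\ell};\Q)\big)\Longrightarrow H_{p+q}\big(\wh V_{H;\bs}';\Q\big),$$
and argue that the $E^2$-page is supported in a bounded rectangle with finite-dimensional entries. By the K\"unneth theorem and the hypothesis on $H_*(\wh V_H;\Q)$, each $H_q(\wh V_H^{\,\ell};\Q)$ is a finite-dimensional $\Q$-vector space vanishing for $q\!>\!\ell\dim V$. The group~$K$, as a subgroup of the finitely generated abelian group $\cR_H^{\oplus\ell}$, is itself finitely generated abelian of rational rank at most $\ell\cdot\rk_\Z\!\cR_H$. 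For any such~$K$ and finite-dimensional $\Q K$-module~$M$, the rational group homology $H_p(K;M)$ is finite-dimensional and vanishes above the rational rank of~$K$: the torsion subgroup of~$K$ contributes trivially in positive degrees over~$\Q$, and the torsion-free quotient admits a finite Koszul resolution. Hence the abutment $H_*(\wh V_{H;\bs}';\Q)$ is finite-dimensional. The main point requiring care is the precise identification of the covering $\wh V_H^{\,\ell}\!\to\!\wh V_{H;\bs}'$ together with the finiteness statement for $H_p(K;M)$ in the presence of torsion in~$K$; beyond this bookkeeping, no new ideas past standard spectral-sequence and group-homology machinery are needed.
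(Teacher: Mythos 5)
Your argument is correct, but it takes a genuinely different route from the paper's. Both proofs begin with the same reduction to $\wh V_{H;\bs}'$ via the finiteness of $\cR_H/\cR_{H;\bs}'$. From there the paper passes to cohomology by the Universal Coefficient Theorem, settles $\ell\!=\!1$ by observing that $\wh V_{H;\bs/\gcd(\bs)}'\!\lra\!\wh V_{H;\bs}'$ is a finite covering (so the pullback is injective with $\Q$-coefficients), and then inducts on $\ell$: projection to the last factor exhibits $\wh V_{H;\bs}'$ as a fiber bundle over $V$ with fiber $\wh V_{H';\bs'}'$, where $H'\!=\!H\!+\!s_{\ell}H_1(V;\Z)$, and the Serre spectral sequence together with the inductive hypothesis finishes the argument. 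You instead make a single application of the Cartan--Leray spectral sequence for the regular covering $\wh V_H^{\,\ell}\!\lra\!\wh V_{H;\bs}'$ with finitely generated abelian deck group $K\!=\!H_{\bs}/H^{\oplus\ell}$, feeding in the hypothesis exactly once via the K\"unneth theorem and then using the finiteness and bounded support of rational group homology of finitely generated abelian groups with finite-dimensional coefficients. Your identification $\wh V_{H;\bs}'\!=\!\wh V_H^{\,\ell}/K$ is correct (the $K$-action is the restriction of the deck action of $\cR_H^{\oplus\ell}$, hence free and properly discontinuous), and the group-homology input is standard (transfer kills the torsion of $K$ in positive degrees over $\Q$; the free part is handled by the Koszul resolution of $\Z$ over $\Z[\Z^r]$). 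What your route buys is the elimination of the induction and of the change of subgroup from $H$ to $H'$ -- a step whose legitimacy the paper leaves implicit, though it does hold since $\cR_{H'}$ is finite and so $\wh V_{H'}\!\lra\!V$ is a finite covering of a finite CW-complex -- at the price of importing the group-cohomological finiteness machinery; the paper's route needs only the Serre spectral sequence and the injectivity of pullback under finite covers.
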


\begin{proof}
Since $\cR_H/\cR_{H;\bs}'$ is finite, it is sufficient to show that 
$H_*(\wh{V}_{H;\bs}';\Q)$ is finitely generated.
By the Universal Coefficient Theorem \cite[Theorem~53.5]{Mu2},
$H_*$ is finitely generated if and only if $H^*$~is.
Since $H_{\bs/\gcd(\bs)}\!\subset\!H_{\bs}$ and $H\!\subset\!H_1(V;\Z)$
is finitely generated, 
$$q\!: \wh{V}_{H;\bs/\gcd(\bs)}'\!=\!\wh{V}^{\ell}/H_{\bs/\gcd(\bs)}\lra 
\wh{V}_{H;\bs}'\!=\!\wh{V}^{\ell}/H_{\bs}$$
is a finite covering and so the homomorphism
$$q^*\!:\,H^*\big(\wh{V}_{H;\bs}';\Q) \lra H^*\big(\wh{V}_{H;\bs/\gcd(\bs)}';\Q)$$
is injective.
In particular, the claim of the lemma holds if $\ell\!=\!1$.\\

\noindent
Suppose $\ell\!>\!1$. 
Let $\bs'$ denote the tuple consisting of the first $(\ell\!-\!1)$ components of~$\bs$
and 
\BE{Hpr_e} H'=H+s_{\ell\,}H_1(V;\Z)\subset H_1(V;\Z).\EE
The projection $\wh{V}^{\ell}\!\lra\!V$ onto the last component induces 
a fiber bundle
$$q_{\ell}\!: \wh{V}_{H;\bs}'\!=\!\wh{V}^{\ell}/H_{\bs}
\lra V\!=\!\wh{V}/H_1(V;\Z)$$
with fiber $\wh{V}_{H';\bs'}$.
By Serre's Spectral Sequence (e.g.~Theorem~9.2.1, 9.2.17, or 9.3.1 in \cite{Sp}
applied with $\Z_2$-coefficients in the last two cases), 
$H^*(\wh{V}_{H;\bs}';\Q)$ is thus finitely generated if 
$H^*(V;\Q)$ and $H^*(\wh{V}_{H';\bs'}';\Q)$ are finitely generated.
This is the case for $H^*(V;\Q)$ because $V$ is a finite CW-complex.
By induction on~$\ell$, we can assume that this is also the case for $H^*(\wh{V}_{H';\bs'}';\Q)$.
\end{proof}

\begin{rmk}\label{rimtoriFG_rmk}
The statement and proof of Lemma~\ref{rimtoriFG_lmm} can be adapted to a disconnected~$V$.
For each $r\!=\!1,\ldots,N$, let 
$$\cR_{H;r}=q_H\big(H_1(V_r;\Z)\big)  \subset\cR_H\,;$$
these modules span~$\cR_H$.
The first factor in the definition of~$\wh{V}_{H;\bs_1\ldots\bs_N}$ in~\eref{HbsCov_e}
is finite if and only~if the submodule
\BE{cRHr_e2}\wc\cR_{H;\bs_1\ldots\bs_N}\equiv 
\sum_{\begin{subarray}{c}1\le r\le N\\ \ell_r\neq0\end{subarray}}\!\!\! \cR_{H;r}
\subset\cR_H\EE
has finite index. 
This index is finite if $\ell_r\!\neq\!0$ whenever $H_1(V_r;\Q)\!\neq\!\{0\}$
or if $V\!=\!\{0,\i\}\!\times\!F$ for some connected~$F$ and 
$$H\!=\!H_{\De}\subset H_1(V;\Z)=H_1(F;\Z)\oplus H_1(F;\Z)$$
is the diagonal.
If $\wc\cR_{H;\bs_1\ldots\bs_N}$ does not have finite index in~$\cR_H$,
then $H_*(\wh{V}_{H;\bs_1\ldots\bs_N};\Q)$ is clearly not finitely generated.
If the index is finite, $H_*(\wh{V}_{H;\bs_1\ldots\bs_N};\Q)$ is finitely generated.
For the purposes of establishing this statement, $V$ can be replaced by the union of~$V_r$
with $\ell_r\!\neq\!0$ and $H$ by its intersection with the $H_1$ of this subspace.
Thus, we can assume that $\ell_r\!\neq\!0$ for all $r\!=\!1,\ldots,N$.
The proof of Lemma~\ref{rimtoriFG_lmm} then applies by projecting to~$V_N$
and replacing $s_{\ell}H_1(V;\Z)$ in~\eref{Hpr_e} by $s_{N;\ell_N}H_1(V_N;\Z)$ 
if $\ell_N\!\ge\!2$ and by $H_1(V_N;\Z)$ if $\ell_N\!=\!1$ (in this case, $V_N$ no longer
appears in the~fiber).
\end{rmk}

\noindent
We next relate the action of $\Deck(\pi_{H;\bs_1\ldots\bs_N}')$ on the cohomology
of $\wh{V}'_{H;\bs_1\ldots\bs_N}$
to the flux subgroup $\Flux(V)\!\subset\!H_1(V;\Z)$ defined in Section~\ref{appl_subs}.
Let $V_1,\ldots,V_N$ be the connected components of~$V$,
$\bs_1\!\in\!\Z_{\pm}^{\ell_1},\ldots,\bs_N\!\in\!\Z_{\pm}^{\ell_N}$, and
$H\!\subset\!H_1(V;\Z)$ be a submodule.
Define 
\begin{equation*}\begin{split}
\Flux(V)_{H;\bs_1\ldots\bs_N}&=
\big\{q_H\big(\Phi_{V;\bs_1\ldots\bs_N}\big((\ga_{r;i})_{i\le\ell_r,r\le N}\big)\big)\!:
\,\ga_{r;i}\!\in\!\Flux(V_r)~\forall\,i\!\le\!\ell_r,\,r\!\le\!N\big\}\\
&\subset\cR_{H;\bs_1\ldots\bs_N}'\subset \cR_H \,.
\end{split}\end{equation*}

\begin{lmm}\label{rimtoriFlus_lmm}
Let $V$, $V_1,\ldots,V_N$, $\bs_1,\ldots,\bs_N$, and $H$ be as above.
If 
$$\ga\in H_1(V_{\bs_1\ldots\bs_N};\Z)  \qquad\hbox{and}\qquad 
q_H\big(\Phi_{V;\bs_1\ldots\bs_N}(\ga)\big)\in \Flux(V)_{H;\bs_1\ldots\bs_N},$$ 
then the isomorphism
$$\big\{\ga\cdot\big\}^*\!:\,
H^*\big(\wh{V}_{H;\bs_1\ldots\bs_N}';\Z\big)\lra H^*\big(\wh{V}_{H;\bs_1\ldots\bs_N}';\Z\big)$$
is the identity. 
\end{lmm}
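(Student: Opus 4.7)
The plan is to reduce the general $\ga$ to a flux element in product form, realize that element geometrically as a loop of homeomorphisms on $V_{\bs_1\ldots\bs_N}$, and lift the isotopy to the cover to exhibit $\ga\cdot$ as homotopic to the identity.

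First, I would use the definition of $\Flux(V)_{H;\bs_1\ldots\bs_N}$ to choose $\eta_{r;i}\!\in\!\Flux(V_r)$ for $i\!\le\!\ell_r$, $r\!\le\!N$, such that, setting $\eta=(\eta_{r;i})\!\in\!H_1(V_{\bs_1\ldots\bs_N};\Z)$, one has $q_H(\Phi_{V;\bs_1\ldots\bs_N}(\ga\!-\!\eta))\!=\!0$, i.e.\ $\ga\!-\!\eta\!\in\!H_{\bs_1\ldots\bs_N}$. By the discussion after~\eref{HbsDeck_e}, $H_{\bs_1\ldots\bs_N}$ is the kernel of the homomorphism inducing the deck action on $\wh{V}'_{H;\bs_1\ldots\bs_N}$, so the deck transformations $\ga\cdot$ and $\eta\cdot$ coincide. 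It thus suffices to prove $\{\eta\cdot\}^*\!=\!\id$.

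Second, for each $(r;i)$ there is a loop of homeomorphisms $\Psi^{r;i}\!:\![0,1]\!\to\!\tn{Homeo}(V_r)$ with $\Psi^{r;i}_0\!=\!\Psi^{r;i}_1$ and flux class $\eta_{r;i}$. Replacing $\Psi^{r;i}_t$ by $\Psi^{r;i}_t\!\circ\!(\Psi^{r;i}_0)^{-1}$ does not change the flux class (as flux is independent of the basepoint) and arranges $\Psi^{r;i}_0\!=\!\Psi^{r;i}_1\!=\!\id_{V_r}$. Taking the product over all $(r;i)$ yields an isotopy $\Psi\!:\![0,1]\!\times\!V_{\bs_1\ldots\bs_N}\!\to\!V_{\bs_1\ldots\bs_N}$ with $\Psi_0\!=\!\Psi_1\!=\!\id$; its flux at a basepoint $x_0\!=\!(x_{r;i})$ is the tuple of component flux classes and hence equals $\eta\!\in\!H_1(V_{\bs_1\ldots\bs_N};\Z)$.

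Third, I would apply the homotopy lifting property of the covering $\pi'_{H;\bs_1\ldots\bs_N}$ to the map $(t,\wh{x})\!\mapsto\!\Psi_t(\pi'_{H;\bs_1\ldots\bs_N}(\wh{x}))$ with initial lift $\id$ at $t\!=\!0$, obtaining a unique lift $\wh\Psi\!:\![0,1]\!\times\!\wh{V}'_{H;\bs_1\ldots\bs_N}\!\to\!\wh{V}'_{H;\bs_1\ldots\bs_N}$ with $\wh\Psi_0\!=\!\id$. Because $\wh{V}_{\bs_1\ldots\bs_N}$ is a product of connected maximal abelian covers, $\wh{V}'_{H;\bs_1\ldots\bs_N}$ is connected, so the lift $\wh\Psi_1$ of $\id_{V_{\bs_1\ldots\bs_N}}$ is a single deck transformation. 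To identify it, pick a basepoint $\wh{x}_0$: the path $t\!\mapsto\!\wh\Psi_t(\wh{x}_0)$ is a lift of the loop $t\!\mapsto\!\Psi_t(\pi'_{H;\bs_1\ldots\bs_N}(\wh{x}_0))$, which represents $\eta$ in $H_1(V_{\bs_1\ldots\bs_N};\Z)$; under the identification $\Deck(\pi'_{H;\bs_1\ldots\bs_N})\!=\!\cR'_{H;\bs_1\ldots\bs_N}$ of~\eref{HbsDeck_e}, the standard covering-space correspondence between loop classes and endpoint-shifts identifies $\wh\Psi_1$ with $\eta\cdot$. Since $\wh\Psi$ is a continuous homotopy from $\id$ to $\eta\cdot$, homotopy invariance of cohomology yields $\{\eta\cdot\}^*\!=\!\id$, which combined with the first reduction proves the lemma.

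The main subtle step is the identification at the end of the third paragraph: pinning down that the lift $\wh\Psi_1$ coincides (not merely is homotopic to) with the action of $\eta$, which requires carefully matching the covering-space monodromy of the loop $\Psi_t(x_0)$ with the deck action descended from $\wh{V}_{\bs_1\ldots\bs_N}$ to $\wh{V}'_{H;\bs_1\ldots\bs_N}$ via~\eref{HbsDeck_e}. The remaining ingredients — basepoint-independence of flux, homotopy lifting for covering maps, and connectedness of the cover — are standard.
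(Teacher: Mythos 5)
Your proposal is correct and takes essentially the same route as the paper: replace $\ga$ by a representative $\eta$ built from component flux classes (acting identically on the cover since $\ga-\eta\in H_{\bs_1\ldots\bs_N}$), realize $\eta$ as a loop of homeomorphisms of $V_{\bs_1\ldots\bs_N}$, lift to an isotopy on the cover from $\id$ to $\eta\cdot$, and invoke homotopy invariance. The only cosmetic difference is that the paper lifts the component loops $\Psi_{r;i;t}$ explicitly to paths of homeomorphisms of the maximal abelian covers $\wh{V}_r$ and then descends the product to $\wh{V}'_{H;\bs_1\ldots\bs_N}$, whereas you invoke the homotopy lifting property of $\pi'_{H;\bs_1\ldots\bs_N}$ directly and identify $\wh\Psi_1$ with $\eta\cdot$ via the covering-space monodromy correspondence — both are valid ways of producing the same isotopy.
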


\begin{proof} Let 
$$\ga' \equiv (\ga_{r;i}')_{i\le\ell_r,r\le N}\in 
\bigoplus_{r=1}^N\Flux(V_r)^{\oplus\ell_r}$$
be such that 
$q_H(\Phi_{V;\bs_1\ldots\bs_N}(\ga))=q_H(\Phi_{V;\bs_1\ldots\bs_N}(\ga'))$.
Since $\ga'\!-\!\ga\!\in\!H_{\bs_1\ldots\bs_N}$,
the actions of $\ga$ and $\ga'$ on $\wh{V}_{H;\bs_1\ldots\bs_N}'$ are the same
and we can assume that $\ga'\!=\!\ga$.\\

\noindent
For each $r\!=\!1,\ldots,N$ and $i\!=\!1,\ldots,\ell_r$, 
let $\Psi_{r;i;t}\!:V_r\!\lra\!V_r$ be a loop of homeomorphisms generating~$\ga_{r;i}$ 
such~that $\Psi_{r;i;0}\!=\!\id$.
These loops lift to paths of homeomorphisms
\begin{alignat*}{3}
\wh\Psi_{r;i;t}\!: \wh{V}_r&\lra \wh{V}_r, &\quad t&\in[0,1], 
&\quad &\wh\Psi_{r;i;0}=\id_{\wh{V}_r},~~\wh\Psi_{r;i;1}(\wh{x}_{r;i})=\ga_{r;i}\cdot\wh{x}_{r;i},\\
\wt\Psi_t\!: \wh{V}'_{H;\bs_1\ldots\bs_N}&\lra \wh{V}'_{H;\bs_1\ldots\bs_N}, &\quad t&\in[0,1],  
&\quad &\wt\Psi_t\big(\big[(\wh{x}_{r;i})_{i\le\ell_r,r\le N}\big]_H\big)=
\big[\big(\wh\Psi_{r;i;t}(\wh{x}_{r;i})\big)_{i\le\ell_r,r\le N}\big]_H.
\end{alignat*}
Since $\wt\Psi_1\!=\!\ga\cdot$, the homeomorphism $\ga\cdot$ of $\wh{V}'_{H;\bs_1\ldots\bs_N}$
is homotopic to the identity.
This implies the claim.
\end{proof}

\begin{crl}\label{rimtoriFlus_crl}
Let $V$ be a finite CW-complex with connected components $V_1,\ldots,V_N$,
$H\!\subset\!H_1(V;\Z)$ be a submodule, $\ell_1,\ldots\!\ell_N\!\in\!\Z^+$, and 
 $\bs_1\!\in\!\Z_{\pm}^{\ell_1},\ldots,\bs_N\!\in\!\Z_{\pm}^{\ell_N}$.
\begin{enumerate}[label=(\arabic*),leftmargin=*]
\item\label{rimtoriFlus_it1} If the index of $\Flux(V)_H$ in $\cR_H$ is finite, then
 $H^*(\wh{V}'_{H;\bs_1\ldots\bs_N};\Q)$
is finitely generated.

\item\label{rimtoriFlus_it2a} If $\Flux(V)_H\!=\!\cR_H$, then  
$H^*(\wh{V}_{H;\bs_1\ldots\bs_N}';\Q)^{\cR_{H;\bs_1\ldots\bs_N}'}\!=\!H^*(\wh{V}_{H;\bs_1\ldots\bs_N}';\Q)$.

\item\label{rimtoriFlus_it2b} If $\Flux(V)_H\!=\!\cR_H$ and $\rk_{\Z}\cR_H\!\le\!1$, then  
$\pi_{H;\bs_1\ldots\bs_N}'^*H^*(V_{\bs_1\ldots\bs_N};\Q)\!=\!H^*(\wh{V}_{H;\bs_1\ldots\bs_N}';\Q)$.
\end{enumerate}
\end{crl}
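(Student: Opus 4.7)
The plan is to leverage Lemma~\ref{rimtoriFlus_lmm} and the Cartan--Leray spectral sequence throughout. For part~\ref{rimtoriFlus_it1}, I first verify that the hypothesis $[\cR_H:\Flux(V)_H]<\infty$ forces $[\cR_{H;\bs_1\ldots\bs_N}':\Flux(V)_{H;\bs_1\ldots\bs_N}]<\infty$; this follows from a direct componentwise computation using the definitions of $\cR_{H;\bs_1\ldots\bs_N}'$ and $\Flux(V)_{H;\bs_1\ldots\bs_N}$ as images of $q_H\!\circ\!\Phi_{V;\bs_1\ldots\bs_N}$ together with the finite generation of all relevant abelian subgroups. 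Let $Y$ denote the intermediate cover of $V_{\bs_1\ldots\bs_N}$ corresponding to the preimage of $\Flux(V)_{H;\bs_1\ldots\bs_N}$ in $H_1(V_{\bs_1\ldots\bs_N};\Z)$, so that $\wh{V}'_{H;\bs_1\ldots\bs_N}\!\to\!Y$ has deck group $\Flux(V)_{H;\bs_1\ldots\bs_N}$ and $Y\!\to\!V_{\bs_1\ldots\bs_N}$ is a finite cover; in particular $H^*(Y;\Q)$ is finitely generated.

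The heart of part~\ref{rimtoriFlus_it1} is then the Cartan--Leray spectral sequence for $\wh{V}'_{H;\bs_1\ldots\bs_N}\!\to\!Y$, whose $E_2$-term simplifies because of the trivial action on cohomology from Lemma~\ref{rimtoriFlus_lmm}:
\[ E_2^{p,q} = H^p\bigl(\Flux(V)_{H;\bs_1\ldots\bs_N};\Q\bigr)\otimes H^q\bigl(\wh{V}'_{H;\bs_1\ldots\bs_N};\Q\bigr) \Longrightarrow H^{p+q}(Y;\Q). \]
Since $\wh{V}'_{H;\bs_1\ldots\bs_N}$ is a cover of a finite CW-complex, its cohomological dimension is bounded, and $\Flux(V)_{H;\bs_1\ldots\bs_N}$ is finitely generated abelian so each $H^p(\Flux(V)_{H;\bs_1\ldots\bs_N};\Q)$ is finite-dimensional. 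If $q_0$ were the smallest degree in which $H^{q_0}(\wh{V}'_{H;\bs_1\ldots\bs_N};\Q)$ were infinite-dimensional, then $E_2^{0,q_0}$ would be infinite-dimensional while the outgoing differentials $d_r:E_r^{0,q_0}\to E_r^{r,q_0-r+1}$ for $r\ge 2$ would have finite-dimensional targets by the minimality of $q_0$, so $E_\infty^{0,q_0}$ would remain infinite-dimensional, contradicting the finite-dimensionality of $H^{q_0}(Y;\Q)$ of which it is a quotient. Hence each $H^q(\wh{V}'_{H;\bs_1\ldots\bs_N};\Q)$ is finite-dimensional, and the bounded-degree property gives finite generation.

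For part~\ref{rimtoriFlus_it2a}, the key observation is that $\Flux(V)_H=\cR_H$ implies the stronger equality $\Flux(V)_{H;\bs_1\ldots\bs_N}=\cR_{H;\bs_1\ldots\bs_N}'$, verified by a component-by-component comparison of the images of $q_H\!\circ\!\Phi_{V;\bs_1\ldots\bs_N}$ restricted to $\bigoplus_r\Flux(V_r)^{\oplus\ell_r}$ versus $\bigoplus_r H_1(V_r;\Z)^{\oplus\ell_r}$; once this holds, every element of $\cR_{H;\bs_1\ldots\bs_N}'$ admits a lift to $H_1(V_{\bs_1\ldots\bs_N};\Z)$ lying in $\bigoplus_r\Flux(V_r)^{\oplus\ell_r}$, and Lemma~\ref{rimtoriFlus_lmm} yields triviality of the full deck-group action on integral cohomology, hence on rational cohomology. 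For part~\ref{rimtoriFlus_it2b}, invoke part~\ref{rimtoriFlus_it2a} to ensure $\cR_{H;\bs_1\ldots\bs_N}'$ acts trivially on $H^*(\wh{V}'_{H;\bs_1\ldots\bs_N};\Q)$. Since $\cR_{H;\bs_1\ldots\bs_N}'\subset\cR_H$ and $\rk_\Z\cR_H\le 1$, the group $\cR_{H;\bs_1\ldots\bs_N}'$ is finitely generated abelian of rank at most $1$, and therefore $H^p(\cR_{H;\bs_1\ldots\bs_N}';\Q)=0$ for $p\ge 2$; the Cartan--Leray spectral sequence
\[ E_2^{p,q} = H^p\bigl(\cR_{H;\bs_1\ldots\bs_N}';\Q\bigr)\otimes H^q\bigl(\wh{V}'_{H;\bs_1\ldots\bs_N};\Q\bigr) \Longrightarrow H^{p+q}(V_{\bs_1\ldots\bs_N};\Q) \]
has vanishing differentials out of the $p=0$ column, so $E_\infty^{0,q}=E_2^{0,q}=H^q(\wh{V}'_{H;\bs_1\ldots\bs_N};\Q)$ and the edge map, which coincides with $\pi_{H;\bs_1\ldots\bs_N}'^{*}$, is surjective.

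The main obstacle is the equality in part~\ref{rimtoriFlus_it2a}: Lemma~\ref{rimtoriFlus_lmm} readily gives triviality for elements of $\Flux(V)_{H;\bs_1\ldots\bs_N}$, but the promotion from $\Flux(V)_H=\cR_H$ to the corresponding $\bs$-decorated equality requires delicate bookkeeping across the components $V_r$ and the multipliers $\gcd(\bs_r)$, and this step underlies both parts~\ref{rimtoriFlus_it2a} and~\ref{rimtoriFlus_it2b}; the spectral-sequence arguments in parts~\ref{rimtoriFlus_it1} and~\ref{rimtoriFlus_it2b} are then routine once this equality and the boundedness of cohomological dimension are in hand.
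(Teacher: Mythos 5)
Your proof is correct, but for parts~(1) and~(3) it takes a genuinely different route from the paper. Both arguments hinge on Lemma~\ref{rimtoriFlus_lmm} and on the implication $\Flux(V)_H\!=\!\cR_H\Rightarrow\Flux(V)_{H;\bs_1\ldots\bs_N}\!=\!\cR'_{H;\bs_1\ldots\bs_N}$ (and its finite-index analogue), which you and the paper both assert without spelling out the componentwise computation; your treatment of part~(2) is therefore essentially the paper's. For part~(1), however, the paper avoids spectral sequences entirely: $H_*(\wh{V}'_{H;\bs_1\ldots\bs_N};\Q)$ is finitely generated over the Noetherian group ring $\Q[\cR'_{H;\bs_1\ldots\bs_N}]$ because $V_{\bs_1\ldots\bs_N}$ is a finite CW-complex, and Lemma~\ref{rimtoriFlus_lmm} lets one descend the module structure to the finite ring $\Q[\cR'_{H;\bs_1\ldots\bs_N}/\Flux(V)_{H;\bs_1\ldots\bs_N}]$, giving finite generation over~$\Q$ in one step; your inductive contradiction through the Cartan--Leray $E_\infty^{0,q_0}$-page is valid but longer, and requires tracking that first-quadrant convergence is attained at a finite page and that subquotients of finite-dimensional spaces stay finite-dimensional. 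For part~(3), you run Cartan--Leray for the full cover $\wh{V}'_{H;\bs_1\ldots\bs_N}\to V_{\bs_1\ldots\bs_N}$, kill the differentials out of the $p\!=\!0$ column using $\rk_\Z\cR'_{H;\bs_1\ldots\bs_N}\!\le\!1$, and read off surjectivity of the edge map; the paper instead reduces to its Corollary~\ref{CohSurj_crl}, which it proves via explicit cochain-level constructions (Lemmas~\ref{CohSurj_lmm1} and~\ref{CohSurj_lmm2}), a choice made deliberately in order to exhibit an explicit $\Z$-invariant cocycle on the Borel construction — the authors even remark that the spectral-sequence/Borel route you take was suggested to them and is ``more efficient, but does not produce such a cocycle.'' So your approach buys brevity and avoids the chain-level bookkeeping of Section~4.2, while the paper's buys a concrete representative and avoids any reliance on spectral sequences.
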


\begin{proof}
\ref{rimtoriFlus_it1}
The vector space $H^*(\wh{V}_{H;\bs_1\ldots\bs_N}';\Q)$ is finitely generated over
the group ring~of 
\BE{rimtoriFlus_e1}\Deck\big(\pi_{H;\bs_1\ldots\bs_N}'\big)=\cR_{H;\bs_1\ldots\bs_N}'.\EE
By Lemma~\ref{rimtoriFlus_lmm}, the elements of 
$\Flux(V)_{H;\bs_1\ldots\bs_N}$ act trivially on the cohomology of $\wh{V}'_{H;\bs_1\ldots\bs_N}$.
Thus,  $H^*(\wh{V}_{H;\bs_1\ldots\bs_N}';\Q)$ is finitely generated over
the group ring~of the quotient $\cR_{H;\bs_1\ldots\bs_N}'/\Flux(V)_{H;\bs_1\ldots\bs_N}$.
If the index of $\Flux(V)_H$ in~$\cR_H$ is finite, then the index of
$\Flux(V)_{H;\bs_1\ldots\bs_N}$ in $\cR_{H;\bs_1\ldots\bs_N}'$ is also finite
and so  $H^*(\wh{V}_{H;\bs_1\ldots\bs_N}';\Q)$ is finitely generated over~$\Q$.\\

\noindent
\ref{rimtoriFlus_it2a} If $\Flux(V)_H\!=\!\cR_H$, then 
$\Flux(V)_{H;\bs_1\ldots\bs_N}\!=\!\cR_{H;\bs_1\ldots\bs_N}'$.
By Lemma~\ref{rimtoriFlus_lmm}, $\cR_{H;\bs_1\ldots\bs_N}'$ thus acts trivially on 
$H^*(\wh{V}_{H;\bs_1\ldots\bs_N}';\Q)$.\\

\noindent
\ref{rimtoriFlus_it2b} The last claim of this corollary follows from the second claim 
and  Corollary~\ref{CohSurj_crl} below.
\end{proof}

\noindent
In the remainder of this section, we establish Lemmas~\ref{CohSurj_lmm1} and~\ref{CohSurj_lmm2} below.
They  imply Corollary~\ref{CohSurj_crl}, which is used in the proof of 
Corollary~\ref{rimtoriFlus_crl} above.
The statement and proof of Lemma~\ref{CohSurj_lmm1} are well-known.
As pointed out by M.~Wendt and D.~Ruberman on {\it MathOverflow},
the statement of Lemma~\ref{CohSurj_lmm2} can be obtained either from a spectral sequence 
applied to the Borel construction associated to the $\Z$-covering~$\wt{V}$ or 
from the short exact sequence in the proof of \cite[Assertion~5]{Mi68}.
As we are not aware of any published reference for these statements,
we include their proofs for the sake of completeness.
In the proof of Lemma~\ref{CohSurj_lmm2}, we represent each $\Z$-invariant cohomology class 
on a regular $\Z$-covering  by an explicit cohomology class on 
the associated Borel construction (the  argument suggested by D.~Ruberman is more efficient,
but does not produce such a~cocycle).\\ 

\noindent
For a cochain complex $(C^*,\de)$ with an action of a group~$G$, let 
$$(C^*,\de)^G\equiv \big((C^*)^G,\de|_{(C^*)^G}\big), \qquad\hbox{where}\quad
(C^*)^G=\big\{\eta\!\in\!G^*\!\!:\,g\!\cdot\!\eta\!=\!\eta~\forall\,g\!\in\!G\big\},$$
be the $G$-invariant subcomplex of  $(C^*,\de)$ and
$$H^*\!(C^*,\de)^G\equiv\big\{[\eta]\!\in\!H^*(C^*,\de)\!\!:\,
g\!\cdot\![\eta]\!=\![\eta]~\forall\,g\!\in\!G\big\}$$
be the $G$-invariant part of the cohomology of $(C^*,\de)$.
The inclusion $(C^*,\de)^G$ into $(C^*,\de)$ induces a homomorphism
\BE{CGhomom_m} H^*\big((C^*,\de)^G\big) \lra H^*\!(C^*,\de)^G\,.\EE

\begin{lmm}\label{CohSurj_lmm1}
Let $(C^*,\de)$ be a cochain complex over~$\Q$ with an action of a group~$G$.
If $G$ is finite, then the homomorphism~\eref{CGhomom_m} is an isomorphism.
\end{lmm}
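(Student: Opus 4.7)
The plan is to use the standard averaging trick, which works precisely because $G$ is finite and the coefficients are in $\Q$, so that $|G|$ is invertible. Define the averaging operator
\[
\pi\!:\,C^*\lra (C^*)^G, \qquad \pi(c)=\frac{1}{|G|}\sum_{g\in G}g\!\cdot\!c.
\]
I would first verify the three basic properties of $\pi$: (i)~the image of $\pi$ really lies in $(C^*)^G$, which is immediate from the group action permuting the summands; (ii)~$\pi$ commutes with $\de$, because each $g\!\in\!G$ acts by a cochain map and $\de$ is $\Q$-linear; (iii)~$\pi$ restricts to the identity on the subcomplex $(C^*)^G$, since each summand in the average equals the input.

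With these properties in hand, the two halves of the isomorphism statement fall out of a short diagram chase. For surjectivity of the homomorphism~\eref{CGhomom_m}, take a cocycle $\eta\!\in\!C^*$ whose class $[\eta]\!\in\!H^*(C^*,\de)$ is $G$-invariant, so that for each $g\!\in\!G$ there exists $\xi_g\!\in\!C^*$ with $g\!\cdot\!\eta-\eta=\de\xi_g$. Averaging,
\[
\pi(\eta)-\eta=\frac{1}{|G|}\sum_{g\in G}\bigl(g\!\cdot\!\eta-\eta\bigr)
=\de\!\left(\frac{1}{|G|}\sum_{g\in G}\xi_g\right),
\]
so $\pi(\eta)$ is a $G$-invariant cocycle cohomologous to $\eta$ in $(C^*,\de)$, and it represents a preimage of $[\eta]$ under~\eref{CGhomom_m}. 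For injectivity, suppose $\eta\!\in\!(C^*)^G$ is a cocycle and $\eta=\de\xi$ for some $\xi\!\in\!C^*$. Then $\pi(\xi)\!\in\!(C^*)^G$ and, using the cochain-map property of $\pi$ together with the fact that $\pi$ fixes the $G$-invariant cocycle $\eta$,
\[
\de\,\pi(\xi)=\pi(\de\xi)=\pi(\eta)=\eta,
\]
so $[\eta]=0$ already in $H^*((C^*,\de)^G)$.

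There is essentially no obstacle here beyond keeping track of which $\Q$-vector space each element lives in; the only input required is that $|G|$ is a unit in the coefficient ring, which is the reason the statement is phrased over~$\Q$. The same argument would work verbatim over any ring in which $|G|$ is invertible, but fails in general for integral or torsion coefficients, which is the point at which Lemma~\ref{CohSurj_lmm2} (handling the infinite-cyclic case) has substantive content beyond this one.
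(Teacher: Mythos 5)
Your argument is correct and is essentially the paper's own proof: both use the averaging operator $\eta\mapsto\frac{1}{|G|}\sum_{g\in G}g\cdot\eta$ to produce a $G$-invariant representative for surjectivity and a $G$-invariant primitive $\mu_G$ for injectivity. You spell out the intermediate steps (that the average is cohomologous to $\eta$, and that averaging commutes with $\de$) a bit more explicitly than the paper does, but the mechanism is identical.
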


\begin{proof}
Let $\eta\!\in\!C^*$ be a cocycle such that $[\eta]\!\in\!H^*(C^*,\de)^G$.
Then, the cocycle 
$$\eta_G\equiv \frac{1}{|G|}\sum_{g\in G}g\!\cdot\!\eta\in (C^*)^G$$
also represents  $[\eta]$ and so the homomorphism~\eref{CGhomom_m} is surjective.
If \hbox{$\eta\!\in\!(C^*)^G$} is a cocycle such that 
$\eta\!=\!\de\mu$ for some cochain $\mu\!\in\!C^*$, 
then $\eta\!=\!\de\mu_G$ and so the homomorphism~\eref{CGhomom_m} 
is injective.
\end{proof}

\noindent
Let $\wt{V}$ be a topological space with an action of a group $G$ and 
$$\pi\!:\wt{V}\lra V\!\equiv\!\wt{V}/G$$
be the projection to the quotient.
Since $\pi$ commutes with the group action,
\BE{CohPullBack_e}
\pi^*H^*(V;\Q)\subset H^*\!\big(\wt{V};\Q\big)^G
\equiv  H^*\!\big(C^*(\wt{V};\Q),\de\big)^G\,.\EE
In some important cases, the above inclusion is an equality.
If $\pi\!:\wt{V}\!\lra\!V$ is a regular covering, i.e.~the group~$G$ of
its deck transformations acts transitively on the fibers, then $V\!=\!\wt{V}/G$. 
In such a case, every $G$-invariant cochain (resp.~cycle) on~$\wt{V}$ descends 
to a cochain (resp.~cycle) on~$V$, i.e.
\BE{piCoCh_e}\pi^*\big(C^*(V;\Q),\de\big)= \big(C^*(\wt{V};\Q),\de\big)^G\,,\EE
provided the topological space~$V$ is sufficiently nice
and a suitable cohomology theory is used (so~that all elements of $C_*(V;\Q)$ lie inside
evenly covered open subsets of~$V$).
The inclusion in~\eref{CohPullBack_e} is an equality if the homomorphism~\eref{CGhomom_m}
with $C^*\!=\!C^*(\wt{V};\Q)$ is surjective.
The two statements below describe regular coverings for which this is the case.
For the CW-complexes appearing in these statements,
we use the (co)chain complexes generated by the cells of a CW-structure subordinate 
to the evenly covered open subsets of~$V$ or of its pullback to~$\wt{V}$.
A regular covering $\pi\!:\wt{V}\!\lra\!V$ is called \sf{abelian} 
if its group of deck transformations is abelian.

\begin{lmm}\label{CohSurj_lmm2}
Let $\pi\!:\wt{V}\!\lra\!V$ be an abelian covering of a connected CW-complex
 with the group of deck transformations~$\Z$.
Suppose $G$ is a finite group that acts on~$\wt{V}$ so that its action commutes
with the action of~$\Z$ and thus descends to~$V$.
Then,
$$\pi^*\big(H^*(V;\Q)^G\big)=H^*\big(\wt{V};\Q\big)^{\Z\times G}.$$
\end{lmm}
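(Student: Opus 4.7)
The plan is to prove first the $G$-trivial version $\pi^*H^*(V;\Q)=H^*(\wt{V};\Q)^{\Z}$ via a Wang-type long exact sequence, and then derive the stated $G$-equivariant version by applying the $G$-invariants functor, which is exact over~$\Q$ by Lemma~\ref{CohSurj_lmm1}.

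For the first step, I pull back a CW-structure on $V$ subordinate to an evenly covered open cover to a $\Z$-equivariant CW-structure on~$\wt{V}$. Choosing one cell of $\wt{V}$ lying over each cell of $V$ exhibits the cellular chain complex $C_*(\wt{V};\Q)$ as a \emph{free} module over the group ring $\Q[t,t^{-1}]$, where $t\!\in\!\Z$ generates the deck group. Since $\Q[t,t^{-1}]$ is an integral domain, multiplication by $1-t$ is injective on this complex, and the quotient of coinvariants $C_*(\wt{V};\Q)/(1-t)C_*(\wt{V};\Q)$ is naturally identified with $C_*(V;\Q)$ via the pushforward~$\pi_*$. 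This yields a short exact sequence of $\Q$-chain complexes
\begin{equation*}
0 \lra C_*(\wt{V};\Q) \xrightarrow{\,1-t\,} C_*(\wt{V};\Q) \xrightarrow{\,\pi_*\,} C_*(V;\Q) \lra 0.
\end{equation*}
Dualizing over~$\Q$ (which preserves exactness, as $\Q$ is an injective abelian group) produces the short exact sequence of cochain complexes
\begin{equation*}
0 \lra C^*(V;\Q) \xrightarrow{\,\pi^*\,} C^*(\wt{V};\Q) \xrightarrow{\,1-t^*\,} C^*(\wt{V};\Q) \lra 0.
\end{equation*}
The associated long exact cohomology sequence contains the segment
\begin{equation*}
H^n(V;\Q) \xrightarrow{\,\pi^*\,} H^n(\wt{V};\Q) \xrightarrow{\,1-t^*\,} H^n(\wt{V};\Q),
\end{equation*}
so exactness at the middle term identifies $\pi^*H^n(V;\Q)$ with $\ker(1-t^*)=H^n(\wt{V};\Q)^{\Z}$.

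For the second step, I note that the commuting actions of $\Z$ and $G$ on $\wt{V}$ and the descent of the $G$-action to~$V$ make the surjection $\pi^*\colon H^*(V;\Q)\twoheadrightarrow H^*(\wt{V};\Q)^{\Z}$ established above $G$-equivariant. Since $G$ is finite, Lemma~\ref{CohSurj_lmm1} (or equivalently, averaging over~$G$) shows that taking $G$-invariants is exact on $\Q[G]$-modules, so this surjection remains surjective after applying $(\cdot)^G$. Therefore
\begin{equation*}
\pi^*\big(H^*(V;\Q)^G\big) = \big(H^*(\wt{V};\Q)^{\Z}\big)^G = H^*(\wt{V};\Q)^{\Z\times G},
\end{equation*}
as required.

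The main technical point requiring care is the freeness of $C_*(\wt{V};\Q)$ over $\Q[t,t^{-1}]$; this relies on the existence of a CW-structure on $V$ all of whose cells sit inside evenly covered open sets, a standard fact for CW-complexes which the paper already invokes implicitly in its conventions on cellular (co)chain complexes on regular covers. Everything after this point is formal homological algebra driven by the Wang sequence and the averaging trick, and I do not foresee any hidden difficulty.
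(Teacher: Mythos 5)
Your proof is correct, and it takes a genuinely different route from the paper's. You establish the $G$-trivial identity $\pi^*H^*(V;\Q)=H^*(\wt{V};\Q)^{\Z}$ via the short exact sequence of cochain complexes
\begin{equation*}
0 \lra C^*(V;\Q) \xrightarrow{\,\pi^*\,} C^*(\wt{V};\Q) \xrightarrow{\,1-t^*\,} C^*(\wt{V};\Q) \lra 0
\end{equation*}
obtained by dualizing the $(1-t)$ sequence on cellular chains, and then derive the $G$-equivariant version by exactness of $(\cdot)^G$ over~$\Q$. The paper instead works on the topological side: it forms the Borel construction (mapping torus) $B_{\Z}\wt{V}=(\R\!\times\!\wt{V})/\Z$, observes that the projection $B_{\Z}\wt{V}\!\lra\!V$ is a $G$-equivariant homotopy equivalence, and then, given a $G$-invariant cocycle $\eta$ on $\wt{V}$ with $u^*\eta-\eta=\de\mu$, hand-builds an explicit $\Z\!\times\!G$-invariant cocycle $\wt\eta$ on $\R\!\times\!\wt{V}$ that descends to $B_{\Z}\wt{V}$ and restricts to $\eta$ on a fiber. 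Interestingly, the paper's own remark preceding the lemma (Remark~\ref{Topol_rmk}'s neighbor, attributed to M.~Wendt and D.~Ruberman) already flags your route---the ``short exact sequence in the proof of \cite[Assertion~5]{Mi68}''---as an equally valid and in fact more efficient alternative; the authors state they chose the Borel-construction argument only because it produces an explicit representing cocycle. Your argument is cleaner and purely algebraic once the cellular freeness of $C_*(\wt{V};\Q)$ over $\Q[t,t^{-1}]$ is in hand (which, as you note, follows from choosing a CW structure subordinate to evenly covered sets, consistent with the paper's conventions); the paper's argument is longer but yields cocycle-level control. Two very minor stylistic points: the relevant injectivity for dualizing is that of $\Q$ as a $\Q$-module (immediate, since $\Q$ is a field) rather than as an abelian group, and your surjectivity-after-$(\cdot)^G$ step is really the paper's Lemma~\ref{CohSurj_lmm1} applied to the cochain-level short exact sequence, or equivalently the averaging argument---either way it is sound.
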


\begin{proof}
Let $u\!: \wt{V}\!\lra\!\wt{V}$ be a generator of the $\Z$-action and 
$$B_{\Z}\wt{V}=\big(\R\!\times\!\wt{V}\big)\big/\Z, \qquad (t,x)\sim(t\!-\!1,u\!\cdot\!x),$$
be the corresponding Borel construction for $\wt{V}$.
Since the actions of~$\Z$ and~$G$ on~$\wt{V}$ commute, the latter induces a $G$-action on $B_{\Z}\wt{V}$.
Since $\Z$ acts freely on~$\wt{V}$, the projection
$$q\!:B_{\Z}\wt{V}\lra V, \qquad q\big([t,x])= \pi(x)\equiv[x],$$
is a $G$-equivariant homotopy equivalence.
The composition of  the inclusion $\io\!:\wt{V}\!\lra\!B_{\Z}\wt{V}$ 
of a fiber for the fibration $B_{\Z}\wt{V}\!\lra\!S^1$ with~$q$ 
is the covering $\pi\!:\wt{V}\!\lra\!V$.
Thus, it is sufficient to show that the inclusion 
$$\io^*\big(H^*(B_{\Z}\wt{V};\Q)^G\big)\subset  H^*(\wt{V};\Q)^{\Z\times G}$$
is in fact an equality.\\

\noindent
Let $\eta\!\in\!C^k(\wt{V};\Q)$ be a cocycle such that $[\eta]\!\in\!H^k(\wt{V};\Q)^{\Z\times G}$.
Since
$$\eta_G\equiv \frac{1}{|G|}\sum_{g\in G}g\!\cdot\!\eta\in C^k(\wt{V};\Q)^G$$ 
determines the same element of $H^k(B_{\Z}\wt{V};\Q)$, we can assume that 
$\eta\!\in\!C^k(\wt{V};\Q)^G$.
Thus, 
$$u^*\eta-\eta=\de\mu \qquad\hbox{for some}\quad \mu\in C^{k-1}(\wt{V};\Q)^G. $$ 
The chain groups $C_k(\wt{V},\Q)$ and $C_{k-1}(\wt{V},\Q)$ are freely generated by 
the simplices 
$$\big\{u^s\!\circ\!\si_i\!:\,s\!\in\!\Z,~i\big\} \qquad\hbox{and}\qquad 
\big\{u^s\!\circ\!\tau_j\!:\,s\!\in\!\Z,~j\big\}$$
for some $k$-cells $\si_i$ and $(k\!-\!1)$-cells~$\tau_j$ on~$\wt{V}$. 
Define
$$\wt\eta\in C^k(\R\!\times\!\wt{V};\Q)^{\Z\times G} \quad\hbox{by}\quad
\wt\eta\big(\{r\}\!\times\!u^s\!\circ\!\si_i\big)=\eta\big(u^{r+s}\!\circ\!\si_i\big),~~
\wt\eta\big([r,r\!+\!1]\!\times\!u^s\!\circ\!\tau_j\big)=\mu\big(u^{r+s}\!\circ\!\tau_j\big).$$
If $\si_i$ is a $k$-cell and $\vp_{\ell}$ is a $(k\!+\!1)$-cell on~$\wt{V}$, then
\begin{equation*}\begin{split}
\wt\eta\big(\prt([r,r\!+\!1]\!\times\!u^s\!\circ\!\si_i)\big)
&=\wt\eta\big(\{r\!+\!1\}\!\times\!u^s\!\circ\!\si_i\big)
-\wt\eta\big(\{r\}\!\times\!u^s\!\circ\!\si_i\big)
-\wt\eta\big([r,r\!+\!1]\!\times\!u^s\!\circ\!\prt\si_i\big)\\
&=\{u^*\eta\}\big(u^{r+s}\!\circ\!\si_i\big)-\eta\big(u^{r+s}\!\circ\!\si_i\big)
-\prt\mu\big(u^{r+s}\!\circ\!\si_i\big)=0,\\
\wt\eta\big(\prt(\{r\}\!\times\!u^s\!\circ\!\vp_{\ell})\big)
&=\wt\eta\big(\{r\}\!\times\!u^s\!\circ\!\prt\vp_{\ell}\big)
=\big\{\de\eta\}\big(u^{r+s}\!\circ\!\vp_{\ell}\big)=0.
\end{split}\end{equation*}
Thus, $\wt\eta$ is a $\Z\!\times\!G$-invariant cocycle and so descends to 
a $G$-invariant cocycle on $B_{\Z}\wt{V}$. 
The latter restricts to $\eta$ along the fiber of $B_{\Z}\wt{V}\!\lra\!S^1$ over $0\!\in\!S^1$.
\end{proof}

\begin{crl}\label{CohSurj_crl}
Let $\pi\!:\wt{V}\!\lra\!V$ be an abelian covering of a connected CW-complex
and $G$ be its group of deck transformations.
If $G$ is finitely generated and $\rk_{\Z}G\!\le\!1$, then
$$\pi^*H^*(V;\Q)=H^*\big(\wt{V};\Q\big)^G.$$
\end{crl}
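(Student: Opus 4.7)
The plan is to reduce to the two lemmas already proved by exploiting the structure theorem for finitely generated abelian groups. Since $G$ is finitely generated abelian with $\rk_\Z G\le 1$, we can write $G\cong \Z^{\rk_\Z G}\oplus G_0$, where $G_0$ is the (finite) torsion subgroup. The argument will split into the two cases $\rk_\Z G=0$ and $\rk_\Z G=1$.

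In the case $\rk_\Z G=0$, the group $G$ itself is finite. Since $\pi\!:\wt V\!\lra\!V$ is a regular covering of a sufficiently nice space (a CW-complex), the identification~\eref{piCoCh_e} gives $\pi^*(C^*(V;\Q),\de)=(C^*(\wt V;\Q),\de)^G$, so that
\[
\pi^*H^*(V;\Q)=H^*\big((C^*(\wt V;\Q),\de)^G\big).
\]
Lemma~\ref{CohSurj_lmm1} applied to $C^*\!=\!C^*(\wt V;\Q)$ identifies the right-hand side with $H^*(\wt V;\Q)^G$, which is the desired conclusion.

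In the case $\rk_\Z G=1$, choose a splitting $G=\Z\oplus G_0$ with $\Z$ a free summand and $G_0$ finite, and let $V_0\!\equiv\!\wt V/\Z$ denote the intermediate cover. Since $G$ is abelian, the action of $G_0\subset G$ on~$\wt V$ commutes with the $\Z$-action, and so descends to an action of $G_0$ on $V_0$ with quotient~$V$. In particular, $\pi$ factors as
\[
\wt V\stackrel{\pi_0}{\lra}V_0\stackrel{\pi_1}{\lra}V,
\]
where $\pi_0$ is a regular $\Z$-covering with a commuting $G_0$-action, and $\pi_1$ is a regular covering with deck group~$G_0$. Applying Lemma~\ref{CohSurj_lmm2} to $\pi_0$ (with the finite group there being our~$G_0$) yields
\[
\pi_0^*\big(H^*(V_0;\Q)^{G_0}\big)=H^*(\wt V;\Q)^{\Z\times G_0}=H^*(\wt V;\Q)^G.
\]
Applying~\eref{piCoCh_e} to~$\pi_1$ and then Lemma~\ref{CohSurj_lmm1} to the finite group~$G_0$ acting on $C^*(V_0;\Q)$ yields $\pi_1^*H^*(V;\Q)=H^*(V_0;\Q)^{G_0}$. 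Composing, $\pi^*H^*(V;\Q)=\pi_0^*\pi_1^*H^*(V;\Q)=H^*(\wt V;\Q)^G$, completing the proof.

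The only genuine subtlety is the verification that in the rank~$1$ case one actually has a well-defined intermediate cover $V_0$ on which $G_0$ acts with quotient~$V$; this relies on the availability of a splitting $G=\Z\oplus G_0$, which is automatic for finitely generated abelian groups of rank at most~$1$. Everything else is a direct application of Lemmas~\ref{CohSurj_lmm1} and~\ref{CohSurj_lmm2} together with the descent of $G$-invariant cochains to the base recorded in~\eref{piCoCh_e}.
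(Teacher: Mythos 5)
Your proof is correct and follows essentially the same route as the paper's: split via the structure theorem into the finite case $r=0$ (handled by~\eref{piCoCh_e} and Lemma~\ref{CohSurj_lmm1}) and the $r=1$ case, factor $\pi$ through the intermediate cover $\wt V/\Z$, apply Lemma~\ref{CohSurj_lmm2} to the $\Z$-covering with the commuting finite action, and apply the $r=0$ case to the remaining finite covering. The only superficial difference is cosmetic notation ($V_0$, $\pi_0$, $\pi_1$ versus $\wt V_f$, $\pi_\Z$, $\pi_f$).
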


\begin{proof} By \cite[Theorem~12.6.4]{Artin}, 
$$G\approx \Z^r\times G_f$$ 
for some $r\!\in\!\Z^{\ge0}$ and some finite (abelian) group~$G_f$.
If $r\!=\!0$, the claim follows from~\eref{piCoCh_e} and Lemma~\ref{CohSurj_lmm1}.\\

\noindent
Suppose $r\!=\!1$. Let $\wt{V}_f\!=\!\wt{V}/\Z$ and
$$\pi_{\Z}\!:\wt{V}\lra\wt{V}_f \qquad\hbox{and}\qquad  \pi_f\!:\wt{V}_f\lra V$$
be the quotient projection maps. Thus, $\pi\!=\!\pi_f\!\circ\!\pi_{\Z}$.
By the $r\!=\!0$ case above and Lemma~\ref{CohSurj_lmm2},
$$\pi_f^*H^*(V;\Q)=H^*\big(\wt{V}_f;\Q\big)^{G_f}
\qquad\hbox{and}\qquad
\pi_{\Z}^*\big(H^*\big(\wt{V}_f;\Q\big)^{G_f}\big)=H^*\big(\wt{V};\Q\big)^G\,,$$
respectively. Combining the two equations, we obtain the $r\!=\!1$ case of the claim.
\end{proof}

\section{The refined relative GW-counts}
\label{RelGW_sec}

\noindent
We now provide the details needed to refine the standard relative GW-invariants of~$(X,V,\om)$,
as suggested in \cite[Section~5]{IPrel} and outlined in Section~\ref{RelGW_subs0}.
The coverings~\eref{IPcov_e} are special cases of the abelian covers described in 
Section~\ref{AbCovNotat_subs} and are specified at the beginning of Section~\ref{RTCov_subs}.
In the remainder of Section~\ref{RTCov_subs}, we show that 
the total relative evaluation morphisms~\eref{evdfn_e2} lift  to the these coverings
and establish Theorem~\ref{EqRelGWs_thm}.
The lifts~\eref{evXVlift_e} are not unique, but can be chosen consistently; 
see Theorem~\ref{RimToriAct_thm}  in Section~\ref{RTCov_subs2}.
Theorem~\ref{RelGW_thm} is proved in Section~\ref{RelGWThm_subs}.

\subsection{The rim tori covers}
\label{RTCov_subs}

\noindent
Let $X$ be a compact oriented  manifold and $V\!\subset\!X$ be a compact oriented submanifold of 
codimension~$\fc$ with topological components $V_1,\ldots,V_N$.
With $H_X^V\!\subset\!H_1(V;\Z)$ as in~\eref{H1VXdfn_e}, 
$$\cR_{H_X^V}\equiv H_1(V;\Z)_X\approx\cR_X^V\,;$$
see Corollary~\ref{rimtori_crl}.
For $\ell_1,\ldots,\ell_N\!\in\!\Z^{\ge0}$ and
$\bs_1\!\in\!\Z_{\pm}^{\,\ell_1},\ldots,\bs_N\!\in\!\Z_{\pm}^{\,\ell_N}$, define
\begin{gather*}
H_{X;\bs_1\ldots\bs_N}^V=\big(H_X^V\big)_{\bs_1\ldots\bs_N}\subset H_1(V_{\bs_1\ldots\bs_N};\Z),\\
\cR_{X;\bs_1\ldots\bs_N}'^{\,V}=\cR_{H_X^V;\bs_1\ldots\bs_N}'\subset H_1(V;\Z)_X\,,
\qquad  \cR_{X;\bs_1\ldots\bs_N}^V=\cR_{H_X^V;\bs_1\ldots\bs_N}\,.
\end{gather*}
The rim tori covers~\eref{IPcov_e} are the abelian covers
\BE{IPcov_e2}\begin{split}
\pi_{X;\bs_1\ldots\bs_N}'^V\!\equiv\!\pi_{H_X^V;\bs_1\ldots\bs_N}'^V\!:
\wh{V}_{X;\bs_1\ldots\bs_N}'&\equiv\wh{V}_{H_X^V;\bs_1\ldots\bs_N}'
\lra V_{\bs_1\ldots\bs_N},\\
\pi_{X;\bs_1\ldots\bs_N}^V\!\equiv\!\pi_{H_X^V;\bs_1\ldots\bs_N}^V\!: 
\wh{V}_{X;\bs_1\ldots\bs_N}&\equiv\wh{V}_{H_X^V;\bs_1\ldots\bs_N}
\lra V_{\bs_1\ldots\bs_N}.
\end{split}\EE
We will write elements of the second covering as
\BE{whVelem_e2}\big([\ga]_{X;\bs_1\ldots\bs_N},[\wh{x}]_X\big)
\in  \frac{\cR_X^V}{\cR_{X;\bs_1\ldots\bs_N}'^V}\!\times\!\wh{V}_{X;\bs_1\ldots\bs_N}',\EE
with notation as in~\eref{whVelem_e} for $H\!=\!H_X^V$.\\

\noindent
By~\eref{HbsDeck_e}, the groups of deck transformations of these regular coverings are
\BE{IPdeck_e}\Deck\big(\pi_{X;\bs_1\ldots\bs_N}'^V\big)=\cR_{X;\bs_1\ldots\bs_N}'^{\,V}
\qquad\hbox{and}\qquad
\Deck\big(\pi_{X;\bs_1\ldots\bs_N}^V\big) =\cR_{X;\bs_1\ldots\bs_N}^V\,,\EE
respectively.
If $V$ is connected,
$$\Deck\big(\pi_{X;\bs_1\ldots\bs_N}^V\big)\approx
\frac{H_1(V;\Z)_X}{\gcd(\bs)H_1(V;\Z)_X}\!\times\!\gcd(\bs)H_1(V;\Z)_X\,.$$
In general, the second group in~\eref{IPdeck_e} is different from $\cR_X^V$ 
(contrary to an explicit statement in 
\cite[Section~5]{IPrel} and the spirit of the description).
In the $\ell_1,\ldots,\ell_N\!=\!0$ case, 
$\wh{V}_{X;\bs_1\ldots\bs_N}$ is a discrete set of points identified with~$\cR_X^V$.
In most other cases, the coverings~\eref{IPcov_e2} are non-trivial
(i.e.~the first one is not  $V_{\bs_1\ldots\bs_N}\!\times\!\cR_{X;\bs_1\ldots\bs_N}'^{\,V}$).\\

\noindent
Suppose in addition that $V'\!\subset\!V$ is the union of $V_1,\ldots,V_{N'}$ for some $N'\!\le\!N$.
By Corollary~\ref{rimtori_crl2}, $H_X^{V'}\!\subset\!H_1(V';\Z)$ is 
the image of~$H_X^V$ under the projection~\eref{H1proj_e}.
Thus, the continuous map~\eref{whVforg_e} with $H\!=\!H_X^V$ and $H'\!=\!H_X^{V'}$
induces a commutative diagram
\BE{forgIPcov_e}\begin{split}\xymatrix{ \wh{V}_{X;\bs_1\ldots\bs_N} \ar[d]\ar[rr]^{\wt{q}}&& 
\wh{V'}_{X;\bs_1\ldots\bs_{N'}}\ar[d] \\
V_{\bs_1\ldots\bs_N} \ar[rr]^q&& V'_{\bs_1\ldots\bs_{N'}}}
\end{split}\EE
of fiber bundles.
It corresponds to the right square in the diagram of Figure~\ref{IPev_fig}.

\begin{eg}\label{EScover_eg}
Suppose  $\wh\P^2_9$ is a rational elliptic surface as in Example~\ref{EllFib_eg}, 
$F\!\subset\!\wh\P^2_9$ is a smooth fiber,
$\ell\!\in\!\Z^+$, and $\bs\!\in\!\Z^{\ell}$.
In this case, $N\!=\!1$, $H_X^V\!=\!\{0\}$, and 
the first covering in~\eref{IPcov_e2} is isomorphic to
the restriction of~\eref{EScover_e} to any of the connected components of $\C\!\times\!\T_{\bs}^{2(n-1)}$. 
Its group of deck transformations is $\cR_{\wh\P^2_9;\bs}'^{\,F}\!\approx\!\Z^2$
and can be identified~with
$$\gcd(\bs)\cR_{\wh\P^2_9}^F\subset \cR_{\wh\P^2_9}^F\approx \Z^2\,.$$
The second covering in~\eref{IPcov_e2} is~\eref{EScover_e} itself;
its group of deck transformations is 
isomorphic to $(\Z_{\gcd(\bs)})^2\!\oplus\!\Z^2$.
\end{eg}

\begin{eg}\label{P1Vrim_eg3}
Let $F$, $X$, and $F_0,F_{\i}\!\subset\!X$ be as in Example~\ref{P1Vrim_eg} with $F$ connected and 
$V\!=\!F_0\!\cup\!F_{\i}$.
In this case, $N\!=\!2$,
$$H_1(V;\Z)=H_1(F;\Z)\oplus H_1(F;\Z),$$
and $H_X^V\!\subset\!H_1(V;\Z)$ is the diagonal.
With the identifications of Example~\ref{P1Vrim_eg},
the composition of the homomorphism~\eref{PhiVbs_e} with the projection to~$\cR_X^V$
can be written~as 
\begin{gather*}
H_1(F;\Z)^{\ell_1}\!\oplus\!H_1(F;\Z)^{\ell_2}\lra H_1(F;\Z), \quad
\big((\ga_{0;i})_{i\le\ell_1},(\ga_{\i;i})_{i\le\ell_2}\big)
\lra \sum_{i=1}^{\ell_1}\!s_{1;i}\ga_{0;i}-\sum_{i=1}^{\ell_2}\!s_{2;i}\ga_{\i;i}\,.
\end{gather*}
The first covering in~\eref{IPcov_e2} is thus the first covering in~\eref{HbsCov_e} 
with~$V$ replaced by~$F$, $H\!=\!\{0\}$, and $\bs$ being the merged tuple of~$\bs_1$ and~$-\bs_2$.
If $F\!=\!\T^2$ and $(\ell_1,\ell_2)\!\neq\!\0$, the second covering in~\eref{IPcov_e2}
is described by~\eref{EScover_e} with $\bs$ replaced by the merged tuple of~$\bs_1$
and~$-\bs_2$.
With $V'\!=\!F_0$ in~\eref{forgIPcov_e}, $\wh{V'}_{X;\bs_1}\!=\!V^{\ell_1}$ and 
$\wt{q}\!=\!q$.
\end{eg}

\noindent
If $\Si$ is a  compact oriented $m$-dimensional manifold, $A\!\in\!H_m(X;\Z)$,
$k\!\in\!\Z^{\ge0}$, and \hbox{$p\!>\!m$}, let $\fX_{\Si,k}(X,A)$ be the space of tuples
$(z_1,\ldots,z_k,f)$
such~that $f\!\in\!L^p_1(\Si;X)$,
$f_*[\Si]\!=\!A$, and  \hbox{$z_1,\ldots,z_k\in\Si$} are distinct points. 
If in addition $m\!=\!\fc$, $V_1,\ldots,V_N$ and
$\bs_1,\ldots,\bs_N$ are as before, and \eref{bsumcond_e} holds for each
$(V,\bs)\!=\!(V_r,\bs_r)$, let 
$$\fX_{\Si,k;\bs_1\ldots\bs_N}^{V_1,\ldots,V_N}(X,A)\subset 
\fX_{\Si,k+\ell_1+\ldots+\ell_N}(X,A)$$
be the subspace of tuples $(z_1,\ldots,z_{k+\ell_1+\ldots+\ell_N},f)$ such~that 
\begin{alignat*}{2}
&f^{-1}(V_r)=\big\{z_{k+\ell_1+\ldots+\ell_{r-1}+1},\ldots,
z_{k+\ell_1+\ldots+\ell_r}\big\}
&\qquad &\forall~r=1,\ldots,N,\\
&\ord_{z_{k+\ell_1+\ldots+\ell_{r-1}+i}}^{V_r}f=s_{r;i}
&\qquad &\forall~i\!=\!1,2,\ldots,\ell_r,~r\!=\!1,\ldots,N.
\end{alignat*}
We denote~by 
\BE{evXVdfn_e}
\ev_X^V\!=\!\ev_{k+1}\!\times\!\ldots\!\times\!\ev_{k+\ell_1+\ldots+\ell_N}\!:
\fX_{\Si,k;\bs_1\ldots\bs_N}^{V_1,\ldots,V_N}(X,A)
\lra V_{\bs_1\ldots\bs_N}\EE
\sf{the total relative evaluation morphism}.
The $\fc\!=\!2$ case of the next lemma is  
the implied claim of \cite[Section~5]{IPrel}.

\begin{lmm}\label{RimToriAct_lmm}
Suppose $X$ is a compact oriented manifold, 
$V\!\subset\!X$ is a 
compact oriented submanifold of codimension~$\fc$
with  connected components $V_1,\ldots,V_N$,
$A\!\in\!H_{\fc}(X;\Z)$, and
$\bs_r\!\in\!\Z_{\pm}^{\,\ell_r}$ for \hbox{$r\!=\!1,\ldots,N$}.
If $\Si$ is a  compact oriented $\fc$-dimensional manifold, $k\!\in\!\Z^{\ge0}$,
and $r\!=\!1,\ldots,N$, then the morphism~\eref{evXVdfn_e} 
lifts over the first covering in~\eref{IPcov_e2} to a continuous map
$$\wt\ev_X'^{\,V}\!:\fX_{\Si,k;\bs_1\ldots\bs_N}^{V_1,\ldots,V_N}(X,A)
\lra\wh{V}_{X;\bs_1\ldots\bs_N}'\,.$$
\end{lmm}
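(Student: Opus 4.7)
The plan is to verify the standard lifting criterion for the covering $\pi_{X;\bs_1\ldots\bs_N}'^{\,V}$. By~\eqref{HbsCov_e} with $H\!=\!H_X^V$, this regular abelian cover corresponds to the normal subgroup $\Hur^{-1}(H_{X;\bs_1\ldots\bs_N}^V)\!\subset\!\pi_1(V_{\bs_1\ldots\bs_N})$, with $H_{X;\bs_1\ldots\bs_N}^V\!=\!\Phi_{V;\bs_1\ldots\bs_N}^{-1}(H_X^V)$.  Thus, after choosing a base point in each path component of $\fX_{\Si,k;\bs_1\ldots\bs_N}^{V_1,\ldots,V_N}(X,A)$ together with a lift of its image, the map~\eqref{evXVdfn_e} admits a continuous lift $\wt\ev_X'^{\,V}$ if and only~if
\[
\Phi_{V;\bs_1\ldots\bs_N}\!\circ\!\Hur\!\circ\!(\ev_X^V)_*\bigl(\pi_1\bigl(\fX_{\Si,k;\bs_1\ldots\bs_N}^{V_1,\ldots,V_N}(X,A)\bigr)\bigr)\subset H_X^V\subset H_1(V;\Z).
\]

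Next I would interpret this composition geometrically.  A loop $\ga$ at a point $(z_1,\ldots,z_{k+\ell},f)$ is a continuous family $\{(z_j(t),f_t)\}_{t\in S^1}$ of marked $L^p_1$-maps satisfying the contact conditions, and therefore determines a continuous map $F\!:S^1\!\times\!\Si\!\lra\!X$ by $F(t,z)\!=\!f_t(z)$.  Unwinding the definitions, $\Hur\!\circ\!(\ev_X^V)_*[\ga]$ is the tuple of marked-point loops $\ga_{r;i}(t)\!\equiv\!f_t(z_{k+\ell_1+\ldots+\ell_{r-1}+i}(t))$ in $V_r$, and so
\[
\Phi_{V;\bs_1\ldots\bs_N}\!\circ\!\Hur\!\circ\!(\ev_X^V)_*[\ga]
=\sum_{r=1}^N\sum_{i=1}^{\ell_r}s_{r;i}[\ga_{r;i}]\in H_1(V;\Z).
\]
The key claim is the identification
\[
\sum_{r,i}s_{r;i}[\ga_{r;i}]=F_*[S^1\!\times\!\Si]\cap V,
\]
which would immediately place the class in $H_X^V$ since $F_*[S^1\!\times\!\Si]\!\in\!H_{\fc+1}(X;\Z)$.

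To establish this identification, I would smooth $F$ and perturb it rel.\ the loops traced by the marked points so that it is transverse to each $V_r$ away from tubular neighborhoods of these loops; because the contact condition fixes the order of vanishing of the normal component of $F$ along the $i$-th loop to be $s_{r;i}$, no intersection can be pushed off that loop, and the local intersection contribution of $F^{-1}(V_r)$ along it is $s_{r;i}$ times the fundamental class of the loop.  Summing over $r$ and~$i$ and using the definition $\cdot\!\cap\!V\!=\!\PD_V((\PD_X\,\cdot)|_V)$ from Section~\ref{cuthom_subs} then yields the displayed equality.  The hard part will be this smoothing/transversality argument in the $L^p_1$ setting while preserving the precise tangential behaviour at the marked points; I would handle it by working in Darboux-type coordinates along each marked-point loop, where the contact condition of order $s_{r;i}$ gives a standard local model $(t,w)\!\mapsto\!(t,w^{s_{r;i}},\ldots)$ for which the normal intersection number is manifest.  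Once the homological computation is in place, the existence and continuity of $\wt\ev_X'^{\,V}$ on each component of the domain follow from the unique path-lifting property for covering spaces, since $\fX_{\Si,k;\bs_1\ldots\bs_N}^{V_1,\ldots,V_N}(X,A)$ is locally path-connected.
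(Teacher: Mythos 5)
Your proposal is correct, and it reaches the key containment $\Phi_{V;\bs_1\ldots\bs_N}\bigl(\Hur((\ev_X^V)_*[\ga])\bigr)\in H_X^V$ by a route that differs from the paper's at the decisive step. The paper forms the glued pseudocycle $f_1\#(-f_0)$ of Section~\ref{prelimcut_subs}, observes that its class in $H_{\fc}(X\!-\!V;\Z)$ vanishes because $\ga$ is a loop, identifies that same class with the rim-torus class $\io_{S_XV*}^{X-V}(\De_X^V(\sum_{r,i}s_{r;i}\ga_{r;i}))$ swept out by the homotopy $f_t$ near the contact points, and then invokes the injectivity half of Corollary~\ref{rimtori_crl} (hence the exact sequence of Lemma~\ref{rimtori_lmm}) to conclude that $\sum_{r,i}s_{r;i}[\ga_{r;i}]\in H_X^V$. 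You instead exhibit this class directly as $F_*[S^1\!\times\!\Si]\cap V$ with $F(t,z)\!=\!f_t(z)$, so that membership in $H_X^V$ is immediate from the definition~\eref{H1VXdfn_e}; in effect you verify the ``image of $\cap V$'' description of the kernel by hand rather than quoting the exactness result. The geometric content is the same in both arguments --- the tube traced by each contact point contributes its loop with multiplicity equal to the contact order --- and your local-model/transversality step is of exactly the same nature (and the same level of delicacy in the $L^p_1$ setting) as the paper's unproved identity $[f_1\#(-f_0)]=\sum_{r,i}\io_{S_XV_r*}^{X-V}(\De_X^{V_r}(s_{r;i}\ga_{r;i}))$, so you are not assuming anything the paper does not. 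What your version buys is independence from Lemma~\ref{rimtori_lmm}/Corollary~\ref{rimtori_crl} at this point; what it costs is that you must produce the closed $(\fc\!+\!1)$-cycle $F_*[S^1\!\times\!\Si]$ and justify the intersection-theoretic computation of $\cap V$, whereas the paper reuses machinery it has already set up. Your handling of the lifting criterion (base points chosen per path component, local path-connectedness) matches the paper's appeal to the standard covering-space lemma.
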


\begin{proof}
Let
$$\ga\!:[0,1]\lra\fX_{\Si,k;\bs_1\ldots\bs_N}^{V_1,\ldots,V_N}(X,A),\qquad
t\lra (z_{t;1},\ldots,z_{t;k+\ell_1+\ldots+\ell_N},f_t),$$ 
be a loop.
For each $r\!=\!1,\ldots,N$ and $i\!=\!1,\ldots,\ell_r$,
$$\ga_{r;i}\equiv
\ev_{k+\ell_1+\ldots+\ell_{r-1}+i}\circ\ga\!:[0,1]\lra V_r$$
is also a loop.
For each $i\!=\!1,\ldots,\ell_1\!+\!\ldots\!+\!\ell_N$, 
let $B_{z_{k+i}}\!\subset\!\Si$ be a small ball around~$z_{k+i}$.
As in the construction of $f\!\#\!(-f')$ in Section~\ref{prelimcut_subs},
it can be assumed that 
$$f_t\!: \partial B_{z_{k+\ell_1+\ldots+\ell_{r-1}+i}}\lra
S_XV|_{f_t(z_{k+\ell_1+\ldots+\ell_{r-1}+i})}\,.$$
Since $\ga$ is a loop,
\begin{equation*}\begin{split}
0=[f_1\!\#\!(-f_0)]
&=\sum_{r=1}^N\sum_{i=1}^{\ell_r}
\io_{S_XV_r*}^{X-V}\big(\De_X^{V_r}(s_{r;i}\ga_{r;i})\big)\\
&=\io_{S_XV*}^{X-V}\big(\De_X^V\big(\Phi_{V;\bs_1\ldots\bs_N}(\ev_X^V\!\circ\!\ga)\big)
\in \cR_X^V\,.
\end{split}\end{equation*}
By Corollary~\ref{rimtori_crl}, this implies that 
$\Phi_{V;\bs_1\ldots\bs_N}(\ev_X^V\!\circ\!\ga)\!\in\!H_X^V$.
Thus, the image of the fundamental group of $\fX_{\Si,k;\bs_1\ldots\bs_N}^{V_1,\ldots,V_N}(X,A)$ 
under~\eref{evXVdfn_e} in $\pi_1(V_{\bs_1\ldots\bs_N})$
lies in the image of the fundamental group of $\wh{V}_{X;\bs_1\ldots\bs_N}'$ 
under $\pi_{X;\bs_1\ldots\bs_N}'^V$.
By \cite[Lemma~79.1]{Mu}, this implies the claim.
\end{proof}

\begin{proof}[{\bf{\emph{Proof of Theorem~\ref{EqRelGWs_thm}}}}]
Since $\gcd(\bs)$ and $|\cR_X^V|$ are relatively prime, 
$\wh{V}_{H;\bs}\!=\!\wh{V}_{H;\bs}'$.
The second inclusion in~\eref{cohcover_e} is then an equality by 
Corollary~\ref{rimtoriFlus_crl}\ref{rimtoriFlus_it2a} with $H\!=\!H_X^V$.
Under the additional assumption~\eref{EqRelGWs_e2}, both 
inclusions in~\eref{cohcover_e} are equalities by 
Corollary~\ref{rimtoriFlus_crl}\ref{rimtoriFlus_it2b}.
\end{proof}

\begin{rmk}\label{EqRelGWs_rmk}
Let $V$ be possibly disconnected with topological components $V_1,\ldots,V_N$.
The conclusions of Theorem~\ref{EqRelGWs_thm} then hold if
\eref{EqRelGWs_e1} and the relatively prime condition are replaced~by
\BE{EqRelGWs_e3}
\big\{\sum_{\ell_r\neq0}[\ga_r]_{H_X^V}\!:\,\ga_r\!\in\!\Flux(V_r)_X~\forall\,r\big\}=\Flux(V)_X
\qquad\hbox{and}\qquad
\cR_{X;\bs_1\ldots\bs_N}'^V=\cR_X^V,\EE
respectively.
The latter is the case if $\gcd(\bs_r)$ and $|H_1(V_r;\Z)|$ are relatively prime 
for every~$r$.
It is also the case if $F$ is connected, 
$X\!=\!\P^1\!\times\!F$ and $V\!=\!\{0,\i\}\!\times\!F$ as in Example~\ref{P1Vrim_eg3},
and $\gcd(\bs_1,\bs_2)$ and $|H_1(F;\Z)|$ are relatively prime.   
The conclusion of Proposition~\ref{EqRelGWs_prp} holds if $V_r\!\approx\!\T^{2n-2}$
for  every $r\!=\!1,\ldots,N$ and \eref{EqRelGWs_e3} holds.
\end{rmk}

\subsection{Consistent choices of lifts}
\label{RTCov_subs2}

\noindent
We next show that the lifts in Lemma~\ref{RimToriAct_lmm} can be chosen in 
a systematic way, consistent with their use in~\cite{IPsum} for refining 
the symplectic sum formula for GW-invariants and with the diagram in Figure~\ref{IPev_fig}.
The significance of Theorem~\ref{RimToriAct_thm} for the former
is demonstrated by \cite[Proposition~4.2]{GWsumIP}.
We continue with the notation of Section~\ref{RTCov_subs}.

\begin{thm}\label{RimToriAct_thm}
Suppose $X$ is a compact oriented manifold, $V\!\subset\!X$ is a 
compact oriented submanifold of codimension~$\fc$ with connected components
$V_1,\ldots,V_N$, $A\!\in\!H_{\fc}(X;\Z)$, and 
$\bs_r\!\in\!\Z_{\pm}^{\,\ell_r}$ for $r\!=\!1,\ldots,N$. 
Let $\{\ga_j\}\!\subset\!H_1(V;\Z)$ be a collection of representatives
for the elements of $\cR_X^V/\cR_{X;\bs_1\ldots\bs_N}'^{\,V}$.
If $\Si$ is a  compact oriented $\fc$-dimensional manifold and $k\!\in\!\Z^{\ge0}$,
there exists a~lift 
\BE{EvLift_e}\wt\ev_X^V\!:
\fX_{\Si,k;\bs_1\ldots\bs_N}^{V_1,\ldots,V_N}(X,A) \lra 
\wh{V}_{X;\bs_1\ldots\bs_N}\EE
of the morphism~$\ev_X^V$ in~\eref{evXVdfn_e} 
over the covering $\pi_{X;\bs_1\ldots\bs_N}^V$ in~\eref{IPcov_e2} with the following property.
For any $\bff,\bff'\!\in\!\fX_{\Si,k;\bs_1\ldots\bs_N}^{V_1,\ldots,V_N}(X,A)$ with
\BE{RTmatch_e}\wt\ev_X^V(\bff)=\big([\ga_j]_{X;\bs_1\ldots\bs_N},[\ga\!\cdot\!\wh{x}]_X\big)
\quad\hbox{and}\quad 
\wt\ev_X^V(\bff')=\big([\ga_{j'}]_{X;\bs_1\ldots\bs_N},[\wh{x}]_X\big)\EE
for some $\wh{x}\!\in\!\wh{V}_{\bs_1\ldots\bs_N}$, 
$\ga\!\in\!H_1(V_{\bs_1\ldots\bs_N};\Z)$, and $j,j'$ indexing the coset representatives,
the map components of~$\bff$ and~$\bff'$ satisfy
\BE{RimToriAct_e}\big[f\!\#\!(-f')\big]=
\io_{S_XV*}^{X-V}\big(\De_X^V\big(\Phi_{V;\bs_1\ldots\bs_N}(\ga)\!+\!\ga_j\!-\!\ga_{j'}\big)\big)
\in H_{\fc}(X\!-\!V;\Z).\EE
Furthermore, $\wt\ev_X^V(\bff')$ is the unique point in 
$\pi_{X;\bs_1\ldots\bs_N}^{V~-1}(\ev_X^V(\bff'))$ so that \eref{RimToriAct_e} holds
for a given value of~$\wt\ev_X^V(\bff)$.
\end{thm}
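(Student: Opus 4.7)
The plan is to build $\wt\ev_X^V$ one connected component of $\fX\!\equiv\!\fX_{\Si,k;\bs_1\ldots\bs_N}^{V_1,\ldots,V_N}(X,A)$ at a time, fix the base choices so that~\eref{RimToriAct_e} holds between basepoints of different components, and then let continuity together with Lemma~\ref{RimToriAct_lmm} propagate~\eref{RimToriAct_e} to arbitrary pairs $(\bff,\bff')$ with matching relative evaluations. Uniqueness will follow from Corollary~\ref{rimtori_crl}, which identifies $\cR_X^V$ with $H_1(V;\Z)_X$ via $\io_{S_XV*}^{X-V}\!\circ\De_X^V$.

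First, in each connected component $\fX_\alpha$ of $\fX$, select a basepoint $\bff_\alpha=(\cdots,f_\alpha)$. By Lemma~\ref{RimToriAct_lmm}, each choice of a point in the fiber of $\pi_{X;\bs_1\ldots\bs_N}'^{\,V}$ over $\ev_X^V(\bff_\alpha)$ determines a unique continuous lift $\wt\ev_X'^{\,V}|_{\fX_\alpha}$ of $\ev_X^V|_{\fX_\alpha}$ to $\wh V_{X;\bs_1\ldots\bs_N}'$. Setting
\begin{equation*}
\wt\ev_X^V(\bff)=\big([\ga_{j(\alpha)}]_{X;\bs_1\ldots\bs_N},\wt\ev_X'^{\,V}(\bff)\big)\qquad\forall\,\bff\in\fX_\alpha,
\end{equation*}
reduces the construction to choosing, for each $\alpha$, a coset representative $\ga_{j(\alpha)}\in\{\ga_j\}$ and the base lift $\wt\ev_X'^{\,V}(\bff_\alpha)$. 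Pick a distinguished component $\alpha_0$, set $\wt\ev_X^V(\bff_{\alpha_0})$ freely, and for each other component $\alpha$ proceed as follows: after a small isotopy of $\bff_{\alpha_0}$ within $\fX_{\alpha_0}$ aligning its contact points with those of $\bff_\alpha$ (in the spirit of Section~\ref{prelimcut_subs}), the class $[f_{\alpha_0}\!\#\!(-f_\alpha)]\in H_\fc(X\!-\!V;\Z)$ lies in $\cR_X^V$ since $[f_{\alpha_0}]=[f_\alpha]=A$, and by Corollary~\ref{rimtori_crl} it equals $\io_{S_XV*}^{X-V}\De_X^V(\eta_\alpha)$ for a unique $\eta_\alpha\in H_1(V;\Z)_X$. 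Using the splitting $\cR_{X;\bs_1\ldots\bs_N}^V=(\cR_X^V/\cR_{X;\bs_1\ldots\bs_N}'^{\,V})\times\cR_{X;\bs_1\ldots\bs_N}'^{\,V}$, I would pick $[\ga_{j(\alpha)}]$ and $\wt\ev_X'^{\,V}(\bff_\alpha)$ so that the combined offset from the $\alpha_0$-data represents $\eta_\alpha$ in the sense of~\eref{RimToriAct_e}.

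To confirm~\eref{RimToriAct_e} for arbitrary $\bff\in\fX_\alpha$ and $\bff'\in\fX_\beta$ satisfying~\eref{RTmatch_e}, I would pick paths $\ga_\alpha,\ga_\beta$ from $\bff_\alpha,\bff_\beta$ to $\bff,\bff'$ respectively, align contact points by small local isotopies of $f_\alpha$ and $f_\beta$, and decompose
\begin{equation*}
[f\!\#\!(-f')]=[f\!\#\!(-f_\alpha)]+[f_\alpha\!\#\!(-f_\beta)]+[f_\beta\!\#\!(-f')]\in H_\fc(X\!-\!V;\Z).
\end{equation*}
The computation in the proof of Lemma~\ref{RimToriAct_lmm} identifies the first and third terms with $\io_{S_XV*}^{X-V}\De_X^V(\Phi_{V;\bs_1\ldots\bs_N}(\ev_X^V\!\circ\ga_\alpha))$ and $-\io_{S_XV*}^{X-V}\De_X^V(\Phi_{V;\bs_1\ldots\bs_N}(\ev_X^V\!\circ\ga_\beta))$; the middle term equals $\io_{S_XV*}^{X-V}\De_X^V(\ga_{j(\alpha)}-\ga_{j(\beta)})$ by the previous step. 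The element $\ga\in H_1(V_{\bs_1\ldots\bs_N};\Z)$ in~\eref{RTmatch_e} records exactly the difference between the lifted endpoints $\wt\ev_X'^{\,V}(\bff)$ and $\wt\ev_X'^{\,V}(\bff')$ in $\wh V'_{X;\bs_1\ldots\bs_N}$, so $\Phi_{V;\bs_1\ldots\bs_N}(\ev_X^V\!\circ\ga_\alpha)-\Phi_{V;\bs_1\ldots\bs_N}(\ev_X^V\!\circ\ga_\beta)\equiv\Phi_{V;\bs_1\ldots\bs_N}(\ga)$ modulo $H_X^V$, and summing yields~\eref{RimToriAct_e}.

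For the final uniqueness statement, the right-hand side of~\eref{RimToriAct_e} determines $\Phi_{V;\bs_1\ldots\bs_N}(\ga)+\ga_j-\ga_{j'}$ modulo $H_X^V$ by the isomorphism of Corollary~\ref{rimtori_crl}; since $\ga_j$ is already fixed by $\wt\ev_X^V(\bff)$, this pins down both the coset $[\ga_{j'}]$ and the point $[\wh x]_X\in\wh V'_{X;\bs_1\ldots\bs_N}$, hence $\wt\ev_X^V(\bff')$ itself. The main technical obstacle is the handling of the $\#$-construction when two maps do not automatically share their contact points on~$V$: small local isotopies are required to align them without changing the resulting rim-torus class modulo $H_X^V=\ker(\io_{S_XV*}^{X-V}\!\circ\De_X^V)$, and this independence of the isotopies is exactly what makes the additive decomposition of $[f\!\#\!(-f')]$ above well-defined in $\cR_X^V$.
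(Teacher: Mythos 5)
Your overall strategy coincides with the paper's: choose a base point in each connected component of $\fX_{\Si,k;\bs_1\ldots\bs_N}^{V_1,\ldots,V_N}(X,A)$, propagate the lift over each component by covering-space theory via Lemma~\ref{RimToriAct_lmm}, calibrate the components against one another through classes of the form $[f\!\#\!(-f')]$, and deduce uniqueness from the isomorphism of Corollary~\ref{rimtori_crl}. The gap is in how you compare base maps whose images under $\ev_X^V$ differ. When the contact points of $f_{\al_0}$ and $f_\al$ do not coincide, the class $[f_{\al_0}\!\#\!(-f_\al)]$ is not defined until you choose paths in $V$ along which to drag the contact points, and changing those paths by a loop $\de$ in $V_{\bs_1\ldots\bs_N}$ changes the resulting class by $\io_{S_XV*}^{X-V}(\De_X^V(\Phi_{V;\bs_1\ldots\bs_N}(\de)))$. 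The image of these corrections in $\cR_X^V$ is exactly $\cR_{X;\bs_1\ldots\bs_N}'^{\,V}$, which is generally nonzero (for connected $V$ it is $\gcd(\bs)\cR_X^V$). So your claim that the aligning isotopies do not change the rim-torus class modulo $H_X^V$ is false; if it were true, the covering $\wh{V}_{X;\bs_1\ldots\bs_N}'$ would be trivial and the theorem vacuous. The same ambiguity makes your $\eta_\al$ well-defined only in $\cR_X^V/\cR_{X;\bs_1\ldots\bs_N}'^{\,V}$ rather than in $H_1(V;\Z)_X$, and makes the cross terms $[f\!\#\!(-f_\al)]$ and $[f_\be\!\#\!(-f')]$ in your three-term decomposition meaningless without a fixed convention (the computation in Lemma~\ref{RimToriAct_lmm} applies to loops in the configuration space, not to paths between maps with different contact points).

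The paper removes the ambiguity by normalization: it fixes a single point $x_{\bs_1\ldots\bs_N}\!\in\!V_{\bs_1\ldots\bs_N}$, a reference map $\bff_0$ evaluating there, and base points $\bff_m$ of all components also evaluating at $x_{\bs_1\ldots\bs_N}$ with $[f_m\!\#\!(-f_0)]=\io_{S_XV*}^{X-V}(\De_X^V(\ga_{j(m)}))$ exactly; then every base-to-base class is honestly defined with no alignment needed. The general comparison is carried out not by a three-term splitting but by the identity $[f\!\#\!(-f')]-[f_m\!\#\!(-f_{m'})]=\io_{S_XV*}^{X-V}(\De_X^V(\Phi_{V;\bs_1\ldots\bs_N}(\ga')))$ for the closed loop $\ga'=(-\ev_X^V\!\circ\al')*(\ev_X^V\!\circ\al)$, in which both $\#$-classes have matching contact points and are therefore well-defined. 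Your final step, identifying $\Phi_{V;\bs_1\ldots\bs_N}(\ga')$ with $\Phi_{V;\bs_1\ldots\bs_N}(\ga)$ modulo $H_X^V$ by lifting the connecting paths to $\wh{V}_{\bs_1\ldots\bs_N}$, is correct and matches the paper. To repair your argument, either adopt the paper's normalization of base points or carry the chosen connecting paths explicitly through the computation instead of appealing to unspecified small isotopies.
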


\begin{proof} 
We can assume that $\fX_{\Si,k;\bs_1\ldots\bs_N}^{V_1,\ldots,V_N}(X,A)\!\neq\!\eset$;
otherwise, there is nothing to prove.
Choose a base point $\wh{x}_{\bs_1\ldots\bs_N}\!\in\!\wh{V}_{\bs_1\ldots\bs_N}$
over some $x_{\bs_1\ldots\bs_N}\!\in\!V_{\bs_1\ldots\bs_N}$,  
an overall base map 
\BE{bff0_e}\bff_0\equiv (z_{0;1},\ldots,z_{0;k+\ell_1+\ldots+\ell_N},f_0)
\in \fX_{\Si,k;\bs_1\ldots\bs_N}^{V_1,\ldots,V_N}(X,A) \quad\hbox{s.t.}~~ 
\ev_X^V(\bff_0)\!=\!x_{\bs_1\ldots\bs_N},\EE
and a base point 
$$\bff_m\equiv (z_{m;1},\ldots,z_{m;k+\ell_1+\ldots+\ell_N},f_m)
\in \fX_{\Si,k;\bs_1\ldots\bs_N}^{V_1,\ldots,V_N}(X,A) $$
for each topological component so~that 
\BE{fmf0_e} 
\ev_X^V(\bff_m)\!=\!x_{\bs_1\ldots\bs_N}, \qquad
\big[f_m\!\#(-f_0)\big]=\io_{S_XV*}^{X-V}\big(\De_X^V(\ga_j)\big)  \in H_{\fc}(X\!-\!V;\Z)\EE
for some $j\!=\!j(m)$.
We define 
\BE{wtevmain_e}\wt\ev_X^V\!:\fX_{\Si,k;\bs_1\ldots\bs_N}^{V_1,\ldots,V_N}(X,A)
\lra \wh{V}_{X;\bs_1\ldots\bs_N}\equiv
\frac{\cR_X^V}{\cR_{X;\bs_1\ldots\bs_N}'^{\,V}}\!\times\!\wh{V}_{X;\bs_1\ldots\bs_N}'\EE
to be the lift of $\ev_X^V$ over~$\pi_{X;\bs_1\ldots\bs_N}^V$ so~that
$$ \wt\ev_X^V(\bff_m)=\big([\ga_{j(m)}]_{X;\bs_1\ldots\bs_N},[\wh{x}_{\bs_1\ldots\bs_N}]_X\big) 
\qquad\forall~m;$$
see \cite[Lemma~79.1]{Mu}.\\

\noindent
It remains to verify that this lift has the claimed properties.
Suppose $\bff,\bff'\!\in\!\fX_{\Si,k;\bs_1\ldots\bs_N}^{V_1,\ldots,V_N}(X,A)$
satisfy~\eref{RTmatch_e}  and thus  $\ev_X^V(\bff)\!=\!\ev_X^V(\bff')$.
Let $\bff_m$ and $\bff_{m'}$ be the base points of the topological components 
of $\fX_{\Si,k;\bs_1\ldots\bs_N}^{V_1,\ldots,V_N}(X,A)$
containing $\bff$ and $\bff'$, respectively,
and $\al$ and $\al'$ be paths from~$\bff_m$ to~$\bff$ and 
from~$\bff_{m'}$ to~$\bff'$, respectively.
By~\eref{RTmatch_e}, $j(m)\!=\!j$ and $j(m')\!=\!j'$. 
Along with~\eref{fmf0_e}, this~gives 
\BE{fmdiff_e}
\big[f_m\!\#\!(-f_{m'})\big]=\big[f_m\!\#\!(-f_0)\big]-\big[f_{m'}\#\!(-f_0)\big]
=\io_{S_XV*}^{X-V}\big(\De_X^V(\ga_j\!-\!\ga_{j'})\big)  \in H_{\fc}(X\!-\!V;\Z).\EE
Since $\ev_X^V(\bff_m)\!=\!\ev_X^V(\bff_{m'})$ and $\ev_X^V(\bff)\!=\!\ev_X^V(\bff')$,
$$\ga'\equiv\big(-\ev_X^V\!\circ\!\al'\big)*\big(\ev_X^V\!\circ\!\al\big)\!:[0,1]\lra V_r$$
is a well-defined loop.
Furthermore, 
$$[f\!\#\!(-f')]-[f_m\!\#\!(-f_{m'})]
=\io_{S_XV*}^{X-V}\big(\De_X^V(\Phi_{V;\bs_1\ldots\bs_N}(\ga'))\big)
\in  H_{\fc}(X\!-\!V;\Z)\,.$$
Combining this statement with~\eref{fmdiff_e}, we find that 
\BE{liftcomp_e}
[f\!\#\!(-f')]=
\io_{S_XV*}^{X-V}\big(\De_X^V\big(\Phi_{V;\bs_1\ldots\bs_N}(\ga')\!+\!\ga_j\!-\!\ga_{j'}\big)\big)
\in  H_{\fc}(X\!-\!V;\Z).\EE\\

\noindent
Let $\wh\al,\wh\al',\wh\ga'\!:[0,1]\!\lra\!\wh{V}_{\bs_1\ldots\bs_N}$ be 
the lifts of $\ev_X^V\!\circ\!\al,\ev_X^V\!\circ\!\al',\ga'$ such~that
$$\wh\al(0),\wh\al'(0)=\wh{x}_{\bs_1,\ldots,\bs_N}, \qquad
\wh\ga'(0)=\wh\al'(1).$$
Thus,
$$\big[\wh\al(1)\big]_X=\wt\ev_X'^{\,V}(\bff)=[\ga\!\cdot\!\wh{x}]_X,\qquad
\big[\wh\al'(1)\big]_X=\wt\ev_X'^{\,V}(\bff')=[\wh{x}]_X,$$
where $\wt\ev_X'^{\,V}$ is the composition of~\eref{wtevmain_e} with the projection 
to the second component.
Since $\wh\al\!=\!\wh\al'\!*\!\wh\ga'\!:[0,1]\!\lra\!\wh{V}_{\bs_1\ldots\bs_N}$,
$$[\ga\!\cdot\!\wh{x}]_X=\big[\wh\al(1)\big]_X
=\big[\ga'\!\cdot\!\wh\al'(1)\big]_X=[\ga'\!\cdot\!\wh{x}]_X\,.$$
Since $\wh{V}'_{X;\bs_1\ldots\bs_N}\!=\!\wh{V}_{\bs_1\ldots\bs_N}/H_{X;\bs_1\ldots\bs_N}^V$,
it follows that 
$$\Phi_{V;\bs_1\ldots\bs_N}(\ga)-\Phi_{V;\bs_1\ldots\bs_N}(\ga')\in H_X^V\subset H_1(V;\Z)\,.$$
The claim~\eref{RimToriAct_e} now follows from~\eref{liftcomp_e} and Corollary~\ref{rimtori_crl}
with $U\!=\!\eset$. 
The latter also implies the uniqueness claim.
\end{proof}

\noindent
We will call a lift~\eref{EvLift_e} satisfying the properties  of Theorem~\ref{RimToriAct_thm}
\sf{$\{\ga_j\}$-compatible}.
The next proposition describes all such lifts.

\begin{prp}\label{RimToriAct_prp}
Suppose $X$, $V$, $\fc$, $A$, $\bs_r$, $\{\ga_j\}$, $\Si$, and $k$  
are as in the statement of Theorem~\ref{RimToriAct_thm}.
\begin{enumerate}[label=(\arabic*),leftmargin=*]

\item\label{comptest_it} Let $x_{\bs_1\ldots\bs_N}\!\in\!V_{\bs_1\ldots\bs_N}$.
If a lift~\eref{EvLift_e} of~\eref{evXVdfn_e} satisfies~\eref{RimToriAct_e}
for all $\bff,\bff'\!\in\!\fX_{\Si,k;\bs_1\ldots\bs_N}^{V_1,\ldots,V_N}(X,A)$ 
with $\ev_X^V(\bff)\!=\!x_{\bs_1\ldots\bs_N}$, then it is  $\{\ga_j\}$-compatible.

\item\label{Thcomp_it} Let $\eta\!\in\!H_1(X;\Z)$.
If $\wt\ev_X^V$ is a lift of~\eref{evXVdfn_e} compatible with $\{\ga_j\}$,
then so is the lift $\Th_{\eta}\!\circ\!\wt\ev_X^V$.

\item\label{Thcomp_it2} If  the lifts $\wt\ev_X^V$ and $\wt\ev_X'^V$ as in~\eref{evXVdfn_e} 
are compatible with $\{\ga_j\}$,
then  $\wt\ev_X'^V\!=\!\Th_{\eta}\!\circ\!\wt\ev_X^V$ for some 
 \hbox{$\eta\!\in\!H_1(X;\Z)$}.
\end{enumerate}
\end{prp}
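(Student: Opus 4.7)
The plan is to verify the three parts in sequence, each building on Theorem~\ref{RimToriAct_thm} and the properties of the deck-transformation homomorphism $\eta \mapsto \Th_\eta$ from Section~\ref{AbCovNotat_subs}. For part~\ref{comptest_it}, I will argue that the apparently weaker fiberwise hypothesis already forces compatibility on all pairs. Given a lift $\wt\ev_X^V$ satisfying \eref{RimToriAct_e} for all $\bff,\bff'$ with $\ev_X^V(\bff)\!=\!\ev_X^V(\bff')\!=\!x_{\bs_1\ldots\bs_N}$ (a restriction implicit in the hypothesis \eref{RTmatch_e}), I choose, for each topological component of $\fX_{\Si,k;\bs_1\ldots\bs_N}^{V_1,\ldots,V_N}(X,A)$, a representative $\bff_m$ evaluating to $x_{\bs_1\ldots\bs_N}$ and record $\wt\ev_X^V(\bff_m) = ([\ga_{j(m)}]_{X;\bs_1\ldots\bs_N}, [\wh{x}_m]_X)$. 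Applying the hypothesis to pairs of these base maps aligns all the $\wh{x}_m$ with a common translate of a fixed point in $\wh{V}_{\bs_1\ldots\bs_N}$, so the lift realizes precisely the data used in the construction of Theorem~\ref{RimToriAct_thm}. For arbitrary $\bff,\bff'$ sharing a relative evaluation, I then select paths $\al,\al'$ from $\bff_m,\bff_{m'}$ to $\bff,\bff'$ inside the respective components and run verbatim the pseudocycle accounting at the end of the proof of Theorem~\ref{RimToriAct_thm} to deduce \eref{RimToriAct_e}.

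For part~\ref{Thcomp_it}, a direct computation suffices. Writing the values of $\wt\ev_X^V$ at $\bff,\bff'$ in the form prescribed by \eref{RTmatch_e}, I unfold $\Th_\eta$ at both inputs via \eref{gajeta_e} and reorganize to express $\Th_\eta\!\circ\!\wt\ev_X^V(\bff)$ and $\Th_\eta\!\circ\!\wt\ev_X^V(\bff')$ again in that form, now with first components $\ga_j(\eta)$ and $\ga_{j'}(\eta)$, and with the second components related by the translation class $\eta_j+\ga-\eta_{j'}$. The needed compatibility reduces to the congruence
\begin{equation*}
\Phi_{V;\bs_1\ldots\bs_N}(\eta_j\!+\!\ga\!-\!\eta_{j'}) + \ga_j(\eta) - \ga_{j'}(\eta)
\equiv \Phi_{V;\bs_1\ldots\bs_N}(\ga) + \ga_j - \ga_{j'} \pmod{H_X^V},
\end{equation*}
which follows at once from two applications of the defining relation \eref{gajeta_e} for $\eta_j$ and $\eta_{j'}$, combined with the compatibility of the original lift and Corollary~\ref{rimtori_crl}.

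For part~\ref{Thcomp_it2}, I would fix base maps $\bff_m$ and an overall base $\bff_0$ as in the proof of Theorem~\ref{RimToriAct_thm}, and write $\wt\ev_X^V(\bff_0) = ([\ga_{j(0)}]_{X;\bs_1\ldots\bs_N}, [\wh{y}]_X)$ and $\wt\ev_X'^V(\bff_0) = ([\ga_{j'(0)}]_{X;\bs_1\ldots\bs_N}, [\wh{y}']_X)$. I would first pick $\eta$ whose class in $\cR_X^V/\cR_{X;\bs_1\ldots\bs_N}'^{\,V}$ equals $[\ga_{j'(0)}-\ga_{j(0)}]$ and then use the remaining freedom in $\eta$ (elements of $H_X^V$, which move $\eta_{j(0)}$ through the coset $H_{X;\bs_1\ldots\bs_N}^V$) to align the second components, so that $\Th_\eta\!\circ\!\wt\ev_X^V$ and $\wt\ev_X'^V$ agree on $\bff_0$. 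Part~\ref{Thcomp_it} guarantees that $\Th_\eta\!\circ\!\wt\ev_X^V$ remains $\{\ga_j\}$-compatible, so the uniqueness clause in Theorem~\ref{RimToriAct_thm} forces the two lifts to coincide throughout the component of $\bff_0$. Compatibility of both lifts applied to each pair $(\bff_0,\bff_m)$ then pins down $\wt\ev_X'^V(\bff_m)$ by the same prescription as $\Th_\eta\!\circ\!\wt\ev_X^V(\bff_m)$, and continuity within each component transports this equality everywhere.

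The hardest step is the last one in part~\ref{Thcomp_it2}: checking that the single $\eta$ chosen to normalize the lifts at $\bff_0$ automatically realigns them at every other base point~$\bff_m$. Concretely, the index shift $j(m)\!\mapsto\!j_\eta(m)$ and the twist by $\eta_{j(m)}$ dictated by $\Th_\eta$ must match the discrepancy between $\wt\ev_X^V(\bff_m)$ and $\wt\ev_X'^V(\bff_m)$, and this discrepancy is in turn fixed by \eref{RimToriAct_e} as an identity in $\cR_X^V$ whose right-hand side only depends on $\ga_{j(m)}-\ga_{j'(m)}$ and the common class $[f_0\#(-f_m)]$. Once this bookkeeping is settled, part~\ref{Thcomp_it2} follows by combining parts~\ref{comptest_it} and~\ref{Thcomp_it}.
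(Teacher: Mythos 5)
Your part~\ref{Thcomp_it} is essentially the paper's argument: unwind $\Th_\eta$ at both inputs, and the compatibility congruence follows from two applications of \eref{gajeta_e}.

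For part~\ref{comptest_it}, the paper's proof is a one-liner: both sides of \eref{RimToriAct_e} take values in a discrete set and vary continuously with $(\bff,\bff')$, so agreement on the fiber over $x_{\bs_1\ldots\bs_N}$ propagates to all pairs. Your argument instead reconstructs per-component base maps and reruns the pseudocycle accounting from the proof of Theorem~\ref{RimToriAct_thm}. That works, but be aware it is not literally ``the data used in the construction of Theorem~\ref{RimToriAct_thm}'': the theorem's construction insists that all $\bff_m$ sit over a \emph{common} lifted point $\wh{x}_{\bs_1\ldots\bs_N}$, while your given lift may carry distinct second components $\wh{x}_m$. The hypothesis applied to the pairs $(\bff_m,\bff_{m'})$ does constrain how these relate, so the bookkeeping still closes, but this is extra work that the continuity argument avoids entirely.

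For part~\ref{Thcomp_it2}, the paper writes down the needed $\eta$ in closed form, namely $\eta=\ga_j'-\ga_j+\Phi_{V;\bs_1\ldots\bs_N}(\eta_j)$ where $[\wh{x}']_X=[\eta_j\cdot\wh{x}]_X$ relates the two lifts at $\bff_0$, and then verifies $\wt\ev_X'^V=\Th_\eta\circ\wt\ev_X^V$ directly. Your two-step scheme (fix the class of $\eta$ modulo $\cR_{X;\bs_1\ldots\bs_N}'^{\,V}$, then adjust, then appeal to part~\ref{Thcomp_it} plus the uniqueness clause of the theorem) is sound in outline, but the parenthetical describing the ``remaining freedom'' is wrong. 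Modifying $\eta$ by an element of $H_X^V$ does not change $\Th_\eta$ at all: from \eref{gajeta_e}, replacing $\eta$ by $\eta+h$ with $h\in H_X^V$ leaves $\ga_j(\eta)$ unchanged and shifts $\eta_j$ only by an element of $H_{X;\bs_1\ldots\bs_N}^V$, which acts trivially on $\wh{V}_{X;\bs_1\ldots\bs_N}'$. The freedom that actually realigns the second components is the summand $\Phi_{V;\bs_1\ldots\bs_N}(H_1(V_{\bs_1\ldots\bs_N};\Z))$ appearing in the paper's choice of $\eta$, which surjects onto $\cR_{X;\bs_1\ldots\bs_N}'^{\,V}=\Deck(\pi_{X;\bs_1\ldots\bs_N}'^V)$. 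With that correction your argument goes through; as written, the parenthetical would lead a reader to believe the alignment can be achieved by a move that is a no-op.
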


\begin{proof}
\ref{comptest_it} This is immediate because the two sides of~\eref{RimToriAct_e}
take discrete values, but depend continuously on $(\bff,\bff')$.\\

\noindent
\ref{Thcomp_it} If  $\bff,\bff'\!\in\!\fX_{\Si,k;\bs_1\ldots\bs_N}^{V_1,\ldots,V_N}(X,A)$
satisfy~\eref{RTmatch_e}, then 
$$\Th_{\eta}\big(\wt\ev_X^V(\bff)\big)=
\big([\ga_j(\eta)]_{X;\bs_1\ldots\bs_N},[\eta_j\ga\!\cdot\!\wh{x}]_X\big),
\quad
\Th_{\eta}\big(\wt\ev_X^V(\bff')\big)=\big([\ga_{j'}(\eta)]_{X;\bs_1\ldots\bs_N},
[\eta_{j'}\!\cdot\!\wh{x}]_X\big),$$
with notation as in~\eref{gajeta_e}.
By~\eref{gajeta_e} and~\eref{RimToriAct_e}, 
\begin{equation*}\begin{split}
\io_{S_XV*}^{X-V}\big(\De_X^V\big(
\Phi_{V;\bs_1\ldots\bs_N}(\eta_j\!+\!\ga\!-\!\eta_{j'})\!+\!\ga_j(\eta)
\!-\!\ga_{j'}(\eta)\big)\big)
&=\io_{S_XV*}^{X-V}\big(\De_X^V
\big(\Phi_{V;\bs_1\ldots\bs_N}(\ga)\!+\!\ga_j\!-\!\ga_{j'}\big)\big)\\
&=\big[f\!\#\!(-f')\big] \in H_{\fc}(X\!-\!V;\Z).
\end{split}\end{equation*}
This establishes the second claim.\\

\noindent
\ref{Thcomp_it2} Let $x_{\bs_1\ldots\bs_N}$ and $\bff_0$ be as in~\eref{bff0_e}.
Suppose
$$\wt\ev_X^V(\bff_0)=\big([\ga_j]_{X;\bs_1\ldots\bs_N},[\wh{x}]_X\big), \qquad
\wt\ev_X'^V(\bff_0)=\big([\ga_j']_{X;\bs_1\ldots\bs_N},[\wh{x}']_X\big),$$
where $\ga_j,\ga_j'$ are among the chosen coset representatives.
Since $\wt\ev_X^V$ and $\wt\ev_X'^V$ are lifts of~\eref{evXVdfn_e},
$$[\wh{x}']_X=[\eta_j\!\cdot\!\wh{x}]_X\,$$
for some $\eta_j\!\in\!H_1(V_{\bs_1\ldots\bs_N};\Z)$. Let
$$\eta=\ga_{j}'-\ga_{j}+\Phi_{V;\bs_1\ldots\bs_N}(\eta_j).$$
In particular, $\ga_j(\eta)\!=\!\ga_j'$.
Suppose $\bff'\!\in\!\fX_{\Si,k;\bs_1\ldots\bs_N}^{V_1,\ldots,V_N}(X,A)$,
$$\wt\ev_X^V(\bff')=\big([\ga_{j'}]_{X;\bs_1\ldots\bs_N},[\ga^{-1}\!\cdot\!\wh{x}]_X\big), \quad
\wt\ev_X'^V(\bff')=\big([\ga_{j'}']_{X;\bs_1\ldots\bs_N},[\eta_{j'}\ga^{-1}\!\cdot\!\wh{x}]_X\big)$$
for some $\ga,\eta_{j'}\!\in\!H_1(V_{\bs_1\ldots\bs_N};\Z)$.
Since $\wt\ev_X^V$ and $\wt\ev_X'^V$ are lifts of~\eref{evXVdfn_e} compatible with $\{\ga_j\}$,
\begin{equation*}\begin{split}
\io_{S_XV*}^{X-V}\big(\De_X^V
\big(\Phi_{V;\bs_1\ldots\bs_N}(\ga)\!+\!\ga_j\!-\!\ga_{j'}\big)\big)
&=\big[f_0\!\#\!(-f')\big] \\
&=\io_{S_XV*}^{X-V}\big(\De_X^V
\big(\Phi_{V;\bs_1\ldots\bs_N}(\eta_j\!+\!\ga\!-\!\eta_{j'})\!+\!\ga_j'\!-\!\ga_{j'}'\big)\big).
\end{split}\end{equation*}
It follows that 
\begin{gather*}
\ga_{j'}+\eta-\ga_{j'}'-\Phi_{V;\bs_1\ldots\bs_N}(\eta_{j'})\in H_X^V, \qquad
\ga_{j'}(\eta)\!=\!\ga_{j'}', \\
\Th_{\eta}\big([\ga_{j'}]_{X;\bs_1\ldots\bs_N},[\ga^{-1}\!\cdot\!\wh{x}]_X\big)=
\big([\ga_{j'}']_{X;\bs_1\ldots\bs_N},[\eta_{j'}\ga^{-1}\!\cdot\!\wh{x}]_X\big).
\end{gather*}
Thus, $\wt\ev_X'^V\!=\!\Th_{\eta}\!\circ\!\wt\ev_X^V$. 
\end{proof}

\begin{rmk}\label{cHlift_rmk}
The above choices of $\{\ga_j\}$, ~$\bff_0$, and $\wh{x}_{\bs_1\ldots\bs_N}$ roughly
correspond to the two set-theoretic descriptions  of~$\wh{V}_{X;\bs_1\ldots\bs_N}$ in~\cite{IPrel}
as equivalence classes of cycles in~$X\!-\!V$ that are standard near~$V$.
These descriptions in \cite[Section~5]{IPrel} do not specify a topology, especially 
when the contact points come together.
A hands-on description of the topology of $\wh{V}_{X;(1)}$ is given at the end of \cite[Section~5]{IPrel};
our definition of $\wh{V}_{X;\bs_1\ldots\bs_N}$ 
via the homomorphisms $\Phi_{V;\bs_1\ldots\bs_N}$ is based on this description.
Lemma~\ref{RimToriAct_lmm} makes it apparent that the lifts
of the evaluation maps can be extended over the compactified moduli spaces of relative
stable maps once 
these spaces are shown to be compatible with gluing;
the latter is necessary for any virtual construction of the moduli cycle
and ensures that the boundary strata of the configuration spaces are of real codimension
at least~2 (and so the loop~$\ga$ as in the proof of Lemma~\ref{RimToriAct_lmm} can be
assumed to lie completely in the main stratum). 
\end{rmk}

\begin{eg}\label{EScover_eg4}
If  $X\!=\!\wh\P^2_9$ is a rational elliptic surface,
$V\!=\!F$ is a smooth fiber as in Examples~\ref{EllFib_eg} and~\ref{EScover_eg},
and $\ell\!\in\!\Z^+$, then $H_X^V\!=\!0$.
We can identify $H_1(V;\Z)$ with $\Z\!\oplus\!\fI\Z$ and
take
$$x_{\bs}=[\0_{2\ell}]\in V^{\ell}\!=\!\T^{2\ell}, \qquad
\wh{x}_{\bs}=\0_{2\ell}\in\C^{\ell}.$$
Let $\{\ga_j\}\!\subset\!\Z\!\oplus\!\fI\Z$ be a collection of representatives for 
the elements of $\Z_{\gcd(\bs)}\!\oplus\!\fI\Z_{\gcd(\bs)}$.
As indicated in \cite[Example~3.1]{GWsumIP}, it is convenient to identify 
the corresponding base points for the components of~$\wh{V}_{X;\bs}$~as
$$\big([\ga_j]_{X;\bs},[\wh{x}_{\bs}]_X\big)=
\big(\ell^{-1}\ga_j,[\ell^{-1}s_i^{-1}\ga_j]_{i\le\ell}\big)\in
\wh{V}_{X;\bs}= \C\!\times\!\T_{\bs}^{2(\ell-1)}\,.$$
\end{eg}

\begin{eg}\label{P1Vrim_eg4}
In the setting of Examples~\ref{P1Vrim_eg} and~\ref{P1Vrim_eg3}, only 
the tuples $\bs_1\!\in\!\Z_{\pm}^{\ell_1}$ and $\bs_2\!\in\!\Z_{\pm}^{\ell_2}$ with
$$\sum_{i=1}^{\ell_1}s_{1;i}=\sum_{i=1}^{\ell_2}s_{2;i}$$
are relevant in the context of Theorem~\ref{RimToriAct_thm}.
We first choose a base point $x_1\!\in\!F$.
For all $\bs_1$ and~$\bs_2$ as above, we then choose a base point $\wh{x}_{\bs_1\bs_2}$
in $\wh{F}_{\bs}$ over $x_1^{\ell_1+\ell_2}$ in~$F^{\ell_1+\ell_2}$,
 where $\bs$ is the merged tuple of~$\bs_1$ and~$\bs_2$ as before,
and a collection  $\{\ga_j\}\!\subset\!H_1(F;\Z)$ of 
representatives for the elements~of
$$\frac{H_1(V;\Z)_X}{\cR_{X;\bs_1\bs_2}'^{\,V}}=\frac{H_1(F;\Z)}{\gcd(\bs_1,\bs_2)H_1(F;\Z)}\,.$$
If $F\!=\!\T^2$ and $(\ell_1,\ell_2)\!\neq\!\0$, then
we can identify $H_1(F;\Z)$ with $\Z\!\oplus\!\fI\Z$.
As indicated in \cite[Example~3.3]{GWsumIP}, it is then convenient to identify 
the corresponding base points for the components of~$\wh{V}_{X;\bs}$~as
$$\big([\ga_j]_{X;\bs_1\bs_2},[\wh{x}_{\bs_1\bs_2}]_X\big)
=\big(\ell^{-1}\ga_j,\big([\ell^{-1}s_{1;i}^{-1}\ga_j]_{i\le\ell_1},
[-\ell^{-1}s_{2;i}^{-1}\ga_j]_{i\le\ell_2}\big)\big)\in 
\C\!\times\!\T_{(\bs_1,-\bs_2)}^{2(\ell_1+\ell_2-1)}.$$
\end{eg}

\subsection{Proof of Theorem~\ref{RelGW_thm}}
\label{RelGWThm_subs}

Let $n_V\!=\!\dim_{\R}V$ and $r_0\!=\!\rk_{\Z}H_1(V;\Z)_X$.
In light of~\eref{evfactor_e} and~\eref{evXVlift_e}, it is sufficient to show that 
\BE{cHsplit_e} \wh{V}_{X;\bs}'\approx \R^{r_0}\!\times\!Y\EE
for some manifold $Y$ of dimension $n_V\ell\!-\!r_0$.\\

\noindent
With $B\!=\!(S^1)^m$ and $H_X^V$ as in~\eref{H1VXdfn_e}, let
$$H_X^B=q_*(H_X^V)\subset H_1(B;\Z), \qquad  
\wt{H}_X^B=q_*^{\,-1}(H_X^B)\subset H_1(V;\Z).$$
Denote by $\cK_{X;\bs}^B\!\subset\!\pi_1(B^{\ell})$ and 
$\cK_{X;\bs}^V,\wt\cK_{X;\bs}^B\!\subset\!\pi_1(V^{\ell})$
the preimages of $H_X^B$ and $H_X^V,\wt{H}_X^B$
under the homomorphisms
$$\pi_1(B^{\ell})\stackrel{\Hur}{\lra} H_1(B^{\ell};\Z)
\stackrel{\Phi_{B;\bs}}\lra H_1(B;\Z) 
\quad\hbox{and}\quad
\pi_1(V^{\ell})\stackrel{\Hur}{\lra} H_1(V^{\ell};\Z)
\stackrel{\Phi_{V;\bs}}\lra H_1(V;\Z),$$
with $\Phi_{B;\bs}$ and $\Phi_{V;\bs}$ as in~\eref{PhiVbs_e}.
From the commutativity of the diagram
$$\xymatrix{ \pi_1(V^{\ell}) \ar[rr]^{\Phi_{V;\bs}\circ\Hur} \ar[d]_{q_*}&& H_1(V;\Z)  \ar[d]^{q_*}\\
\pi_1(B^{\ell})  \ar[rr]^{\Phi_{B;\bs}\circ\Hur} && H_1(B;\Z)\,,}$$
we find that $\wt\cK_{X;\bs}^B\!=\!q_*^{-1}(\cK_{X;\bs}^B)$.\\

\noindent 
Let 
$$\pi_{X;\bs}'^B\!:\wh{B}_{X;\bs}'\!\equiv\!\wh{B}_{H_X^B;\bs}' \lra B^{\ell}
\qquad\hbox{and}\qquad
\ti{q}\!: \wt{V}_{X;\bs}^B\!\equiv\!\pi_{X;\bs}'^{B\,*}V^{\ell}\lra \wh{B}_{X;\bs}' $$
be the covering corresponding to the normal subgroup $\cK_{X;\bs}^B$ of $\pi_1(B^{\ell})$ 
and  the pullback of the fibration $q^{\ell}\!:V^{\ell}\!\lra\!B^{\ell}$, respectively.
Since $\ti{q}_*$ is surjective on~$\pi_1$, the natural projection
$$\wt\pi_{X;\bs}'^B\!:\wt{V}_{X;\bs}^B\lra V^{\ell} $$
is the covering corresponding to the normal subgroup $q_*^{-1}(\cK_{X;\bs}^B)\!=\!\wt\cK_{X;\bs}^B$
of $\pi_1(V^{\ell})$.
Since \hbox{$\cK_{X;\bs}^V\!\subset\!\wt\cK_{X;\bs}^B$}, 
the total space of the covering
$$\pi_{X;\bs}'^V\!:\, \wh{V}_{X;\bs}'\lra V_{\bs}\!=\!V^{\ell}$$
corresponding to $\cK_{X;\bs}^V$ is also a covering of~$\wt{V}_{X;\bs}^B$.\\

\noindent
By the homotopy exact sequence 
\cite[Theorem~7.2.10]{Sp} for the fibration $q\!:V\!\lra\!B$
and the Hurewicz isomorphism \cite[Theorem~7.5.5]{Sp}, the sequence
$$H_1(F;\Z)\lra H_1(V;\Z) \stackrel{q_*}{\lra} H_1(B;\Z)\lra0$$
is exact. 
Since $H_1(F;\Q)\!=\!\{0\}$,
$$\rk_{\Z}\big(H_1(B;\Z)/H_X^B\big)= \rk_{\Z}H_1(V;\Z)_X = r_0.$$
Similarly to Example~\ref{Tcov_eg}, this implies~that 
$$ \wh{B}_{X;\bs}'\approx \R^{r_0}\!\times\!(S^1)^{m\ell-r_0}\,.$$
Thus, $\wt{V}_{X;\bs}^B\!\approx\!\R^{r_0}\!\times\!Y'$
for some manifold $Y'$ of dimension $n_V\ell\!-\!r_0$
and so \eref{cHsplit_e} holds for some covering~$Y$ of~$Y'$.

\begin{rmk}\label{RelGW_rmk}
Theorem~\ref{RelGW_thm} extends to disconnected divisors $V$ 
with topological components $V_1,\ldots,V_N$ by replacing 
the rank of~$H_1(V;\Z)_X$ with the rank~of the submodule in~\eref{cRHr_e2} with $H\!=\!H_X^V$.
\end{rmk}

\vspace{.3in}

\noindent
{\it Simons Center for Geometry and Physics, SUNY Stony Brook, NY 11794\\
mtehrani@scgp.stonybrook.edu}\\

\noindent
{\it Department of Mathematics, SUNY Stony Brook, Stony Brook, NY 11794\\
azinger@math.sunysb.edu}\\


\begin{thebibliography}{99}

\bibitem{Artin} M.~Artin, {\it Algebra}, Prentice Hall, 1991.

\bibitem{BF} K.~Behrend and B.~Fantechi, {\it The intrinsic normal cone},
Invent.~Math.~128 (1997), no.~1, 45--88.

\bibitem{DF} W.~Dwyer and D.~Fried, {\it Homology of free abelian covers~I}, 
Bull.~London Math.~Soc.~19 (1987), no.~4, 350–-352.

\bibitem{GWsumIP} M.~Farajzadeh Tehrani and A.~Zinger,
{\it On the refined symplectic sum formula  for Gromov-Witten invariants},
pre-print.

\bibitem{FO} K.~Fukaya and K.~Ono, {\it Arnold conjecture and Gromov-Witten invariant}, 
Topology 38 (1999), no.~5, 933--1048.

\bibitem{Gf} R.~Gompf, {\it A new construction of symplectic manifolds}, 
Ann.~of Math.~142 (1995), no.~3, 527--595.

\bibitem{IPrel} E.~Ionel and T.~Parker, 
{\it Relative Gromov-Witten invariants},
Ann.~of Math.~157 (2003), no.~1, 45--96.

\bibitem{IPsum} E.~Ionel and T.~Parker, 
{\it The symplectic sum formula for Gromov-Witten invariants},
Ann.~of Math.~159 (2004), no.~3, 935--1025.

\bibitem{Jun1} J.~Li,
{\it Stable morphisms to singular schemes and relative stable morphisms}, 
J.~Diff.~Geom.~57 (2001), no.~3, 509--578.

\bibitem{Jun2} J.~Li,
{\it A degeneration formula for GW-invariants}, 
J.~Diff.~Geom.~60 (2002), no.~1, 199--293.


\bibitem{LR} A.-M.~Li and Y.~Ruan, 
{\it Symplectic surgery and Gromov-Witten invariants of Calabi-Yau 3-folds},
Invent.~Math.~145 (2001), no.~1, 151--218. 

\bibitem{LT} J.~Li and G.~Tian,
{\it Virtual moduli cycles and Gromov-Witten invariants of general symplectic manifolds},
in {\it Topics in Symplectic 4-Manifolds}, 47--83, Internat.~Press 1998.

\bibitem{MS1} D.~McDuff and D.~Salamon, 
{\it Symplectic Topology}, 2nd Ed., Oxford University Press, 1998.

\bibitem{Mi68} J.~Milnor, {\it Infinite cyclic coverings},
in {\it Conference on the Topology of Manifolds},  115–-133,
Prindle, Weber \& Schmidt, 1968.

\bibitem{Mi14} J.~Milnor, note,~2014.

\bibitem{Mu}  J.~Munkres, {\it Topology: a first course},
2nd Ed., Pearson, 2000.

\bibitem{Mu2}  J.~Munkres, {\it Elements of Algebraic Topology},
Addison-Wesley 1984.

\bibitem{MW} J.~McCarthy and J.~Wolfson,  {\it Symplectic normal connect sum}, 
Topology 33 (1994), no.~4, 729--764.

\bibitem{Sp} E.~Spanier, {\it Algebraic Topology},
Springer 1994.

\bibitem{Wa} F.~Warner, {\it Foundations of Differentiable Manifolds and
Lie Groups}, GTM~94, Springer~1971.

\bibitem{Z} A.~Zinger, {\it Pseudocycles and integral homology},
Trans.~Amer.~Math.~Soc.~360 (2008), no.~5, 2741--2765. 


\end{thebibliography}
\end{document}